\let\old@footnotetext\@footnotetext 
\newcommand{\subjclass}[2][2000]{%
  \let\@thefnmark\relax
  \old@footnotetext{\hskip-1.5em\textit{#1 Mathematical Subject Classification:} #2}%
}
\newcommand{\keywords}[1]{%
  \let\@thefnmark\relax
  \old@footnotetext{\hskip-1.5em\textit{Keywords:} #1}%
}
\numberwithin{equation}{section}
\newtheorem{theorem}{Theorem}[section]
\newtheorem{lemma}[theorem]{Lemma}
\newtheorem{proposition}[theorem]{Proposition}
\newtheorem{corollary}[theorem]{Corollary}
\theoremstyle{definition}
\newtheorem{example}[theorem]{Example}
\newtheorem{examples}[theorem]{Examples}
\theoremstyle{remark}
\newtheorem{remark}[theorem]{Remark}
\newtheorem{remarks}[theorem]{Remarks}
\newcounter{subenv}[theorem]
\newenvironment{subremarks}{\begin{list}
  {\textit{\alph{subenv}}\rparen}
  {\leftmargin=1cm
   \usecounter{subenv}
   \def\makelabel##1{\hss\llap{##1}}}}{\end{list}}
\newenvironment{subexamples}{\begin{list}
  {\textit{\alph{subenv}}\rparen}
  {\leftmargin=1cm
   \usecounter{subenv}
   \def\makelabel##1{\hss\llap{##1}}}}{\end{list}}
\newenvironment{subproperty}{\begin{list}
  {\textit{\alph{subenv}}\textup{\rparen}}
  {\leftmargin=1cm
   \usecounter{subenv}
   \def\makelabel##1{\hss\llap{##1}}}}{\end{list}}
\newcommand*{\phantombigstrut}{\displaystyle\vphantom{\frac{(a)}{(b)}}}
\providecommand*{\rparen}{)}
\newcommand{\nth}[1]{#1\textsuperscript{th}}
\let\emptyset\varnothing
\newcommand{\namedash}{--}
\newcommand{\rangedash}{--}
\newcommand*{\card}[1]{\#{#1}}
\newcommand*{\set}[1]{\{#1\}}
\newcommand*{\setst}[2]{\{#1\mid#2\}}
\newcommand*{\diffens}[2]{#1\setminus#2}
\newcommand*{\img}{\mathop{\mathrm{Im}}}
\newcommand*{\iso}{\simeq}
\newcommand*{\onto}{\twoheadrightarrow}
\newcommand*{\isoto}{\overset{\sim}{\to}}
\newcommand*{\into}{\hookrightarrow}
\newcommand*{\xinto}[2][]{\xhookrightarrow[#1]{\mskip15mu#2\mskip15mu}}
\newcommand{\fullapp}[5]{\arraycolsep=1.4pt\begin{array}{ccccl}#1 & {}:{} & #2 & \to & #3\\[3pt] & & #4 & \mapsto & {\displaystyle #5}\end{array}}
\newcommand{\fulliso}[5]{\arraycolsep=1.4pt\begin{array}{lccl}#1\colon& \!#2 & \isoto & #3\\[3pt] & \!#4 & \mapsto & {\displaystyle #5}\end{array}}
\newcommand*{\abs}[1]{\lvert#1\rvert}
\newcommand*{\binomial}[2]{\binom{#1}{#2}}
\newcommand*{\integerinterval}[2]{[\![#1;#2]\!]}
\newcommand*{\divides}{\mid}
\newcommand*{\eulerphi}{\phi}
\newcommand*{\tensor}[1][]{\otimes_{#1}}
\newenvironment{system}{\begin{cases}}{\end{cases}}
\newcommand*{\ffield}[1]{\mathbb{F}_{\!#1}}
\newcommand*{\Fq}{\ffield{q}}
\newcommand*{\Fqnonzero}{\Fq^*}
\newcommand*{\Fqr}{\ffield{q^r}}
\newcommand*{\Fqbar}{\overline{\mathbb{F}}_{\!q}}
\newcommand*{\Fpbar}{\overline{\mathbb{F}}_{\!p}}
\newcommand*{\C}{\mathbb{C}}
\newcommand*{\K}{\mathbb{K}}
\newcommand*{\fieldk}{\Bbbk}
\newcommand{\NormFext}[2]{N_{#1/#2}}
\newcommand*\Tr{\mathop{\mathrm{Tr}}\nolimits}
\newcommand{\TrFext}[2]{\Tr_{#1/#2}}
\newcommand*{\N}{\mathbb{N}}
\newcommand*{\Z}{\mathbb{Z}}
\newcommand*{\ZnZ}{\Z/n\Z}
\newcommand*{\ZnZinv}[1][n]{(\Z/#1\Z)^\times}
\newcommand*{\ZnZnonzero}{\diffens{(\Z/n\Z)}{\set{0}}}
\newcommand*{\ZmZ}{\Z/m\Z}
\newcommand*{\Q}{\mathbb{Q}}
\newcommand*{\Qell}{\mathbb{Q}_\ell}
\newcommand*{\Qellbar}{\overline{\mathbb{Q}}_\ell}
\newcommand*{\unityroots}[1]{\boldsymbol{\mu}_{#1}}
\newcommand{\Id}{\mathop{\mathrm{Id}}\mathopen{}\mathord{}}
\newcommand{\Gal}{\mathop{\mathrm{Gal}}\nolimits}
\newcommand*{\varproj}[1]{\mathbb{P}^{#1}}
\newcommand*{\varprojFq}[1]{\varproj{#1}_{\scriptscriptstyle\!\mathbb{F}_{\mkern-1.5mu q}}}
\newcommand*{\varprojFpbar}[1]{\varproj{#1}_{\scriptscriptstyle\!\overline{\mathbb{F}}_{\mkern-1.5mu p}}}
\newcommand*{\varprojFqbar}[1]{\varproj{#1}_{\scriptscriptstyle\!\overline{\mathbb{F}}_{\mkern-1.5mu q}}}
\newcommand*{\colonsep}{\,\mathclose{:}\mathopen{}\,}
\newcommand{\PGL}[3][]{PGL_{#2}^{#1}(#3)}
\newcommand*{\parameter}{\psi}
\newcommand*{\dworkhypersurface}{X_\parameter}
\newcommand*{\dworkhypersurfaceFqbar}{\mkern2mu\overline{\mkern-2muX}_\parameter}
\newcommand*{\funzeta}[2]{Z_{#1}(#2)}
\newcommand*{\Xbar}{\overline{X}}
\newcommand*{\eulerpoincarecaract}{\chi}
\newcommand*{\trace}{\mathop{\mathrm{tr}}}
\newcommand*{\traceh}[2]{\trace(#1|\penalty600#2)}
\newcommand*{\deth}[2]{\det(#1|\penalty600#2)}
\newcommand*\Hetlad[3]{H_\textup{et}^{#1}(#2,\allowbreak#3)}
\newcommand*{\HetladQell}[2]{\Hetlad{#1}{#2}{\Qell}}
\newcommand*{\HetladQellbar}[2]{\Hetlad{#1}{#2}{\Qellbar}}
\newcommand*{\HetladQellprim}[2]{\Hetlad{#1}{#2}{\Qell}^\textup{prim}}
\newcommand*{\HetladQellbarprim}[2]{\Hetlad{#1}{#2}{\Qellbar}^\textup{prim}}
\newcommand*{\HetladQellinprim}[2]{\Hetlad{#1}{#2}{\Qell}^\textup{inprim}}
\newcommand*{\Waomega}[1][\omega]{W_{a,#1}}
\newcommand*{\Vaomega}[1][\omega]{V_{a,#1}}
\newcommand*{\Waomegaprime}{W_{a',\omega '}}
\newcommand*{\Waomegaeta}{W_{a,\omegaeta}}
\newcommand{\Waprimeomegaetaprime}{W_{a',\omega(\etaa ')}}
\newcommand*{\D}{D}
\newcommand*{\Da}{\D_{a}}
\newcommand*\Frob{\mathop{\mathrm{Frob}}\mathopen{}\mathord{}}
\newcommand*{\Frobcohom}{\Frob^*}
\newcommand*{\isotypiccomponentQellbar}[1]{\mkern2.5mu\overline{\mkern-2.5mu H}_{#1}}
\newcommand*{\isotypiccomponentQell}[2][\omega]{H_{\classZnZinv{#2},#1}}
\newcommand*{\Qa}[1][\omega]{Q_{a,#1}}
\newcommand*{\Qaomegaeta}{Q_{a,\omegaeta}}
\newcommand{\Qaex}[2]{Q_{#1,#2}}
\newcommand{\Pa}{P_{a,\omega}}
\newcommand{\Paex}[2]{P_{#1,#2}}
\newcommand{\Ra}{R_{a}}
\newcommand{\Raex}[1]{R_{#1}}
\newcommand*{\groupG}{G}
\newcommand*{\groupA}{A}
\newcommand*{\cargroupA}{\hat{\groupA}}
\newcommand*{\cargroupAtau}{\smash{\hat{\groupA}}{\mathchoice{\vrule width 0pt height 0.75em}{\vrule width 0pt height 0.75em}{\vrule width 0pt height 0.525em}{\vrule width 0pt height 0.375em}}^{\tau}}
\newcommand*{\cargroupAsigma}{\smash{\hat{\groupA}}{\mathchoice{\vrule width 0pt height 0.75em}{\vrule width 0pt height 0.75em}{\vrule width 0pt height 0.525em}{\vrule width 0pt height 0.375em}}^{\sigma}}
\newcommand*{\symmetricgroup}{\mathfrak{S}}
\newcommand*{\signature}{\epsilon}
\newcommand*{\Sn}{\symmetricgroup_n}
\newcommand*{\Hom}{\mathop{\mathrm{Hom}}\nolimits}
\newcommand*{\End}{\mathop{\mathrm{End}}\nolimits}
\newcommand*{\Aut}{\mathop{\mathrm{Aut}}\nolimits}
\newcommand*{\Ind}{\mathop{\mathrm{Ind}}\nolimits}
\newcommand*{\gr}[1]{\langle#1\rangle}
\newcommand*{\reg}{\mathop{\mathrm{reg}}\mathopen{}\mathord{}}
\newcommand*{\car}{\chi}
\newcommand{\carbis}{\xi}
\newcommand{\fieldcyclgroup}[1]{\K_{#1}}
\newcommand{\carcyclgroup}[1]{\car_{#1}}
\newcommand{\fieldcyclgroupZnZinv}[1]{\K_{#1}}
\newcommand{\carcyclgroupZnZinv}[1]{\car_{#1}}
\newcommand{\Ka}[1][a]{\K_{#1}}
\newcommand{\cara}[1][]{\carcyclgroupZnZinv{#1a}}
\newcommand{\Kprimea}[1][a]{\K_{#1}'}
\newcommand{\classZnZinv}[1]{\bar{#1}}
\newcommand{\classSn}[1]{\langle #1 \rangle}
\newcommand*{\stabilizerGQellbar}[1]{\groupG_{#1}}
\newcommand{\SaQellbar}[1][a]{S_{#1}}
\newcommand{\SaQellbarprime}[1][a]{S_{#1}'}
\newcommand{\SaQell}[1][a]{S_{\classZnZinv{#1}}}
\newcommand{\SaprimeQell}{S_{\classZnZinv{a}{}'}}
\newcommand{\SigmaQellbar}[1][a]{\overline{\Sigma}_{#1}}
\newcommand{\genSigmaQellbar}{\sigma}
\newcommand*{\ga}{{}^{g\mkern-1mu}a}
\newcommand*{\sigmaa}{{}^{\sigma\!}a}
\newcommand*{\taua}{{}^{\tau\!}a}
\newcommand*{\dpart}[2]{\frac{\partial#1}{\partial#2}}
\newcommand{\ma}[1][a]{m_{#1}}
\newcommand{\mprimea}[1][a]{m'_{#1}}
\newcommand{\mprimeaprime}[1][a]{m'_{#1 '}}
\newcommand{\nprimea}[1][a]{n'_{#1}}
\newcommand{\da}[1][a]{d_{#1}}
\newcommand{\ja}[1][a]{j_{#1}}
\newcommand{\gammaa}{\gamma_a}
\newcommand{\na}{n_{a}}
\newcommand{\fa}{f_{a}}
\newcommand{\ea}{e_{a}}
\newcommand{\eaprime}{e_{a'}}
\newcommand{\kerclassZnZinv}[1]{N_{#1}}
\newcommand{\imgclassZnZinv}[1]{E_{#1}}
\newcommand{\ka}[1][a]{k_{#1}}
\newcommand{\usigma}{u_\sigma}
\newcommand{\vsigma}{v_\sigma}
\newcommand{\usigmaprime}{u_{\sigma '}}
\newcommand{\vsigmaprime}{v_{\sigma '}}
\newcommand{\us}{u_s}
\newcommand{\ua}{u_a}
\newcommand{\uka}{u_{ka}}
\newcommand{\uprimea}{u_a'}
\newcommand{\mua}[1][\omega]{\mu_{a,#1}}
\newcommand{\Mua}[1][\omega]{M_{a,#1}}
\newcommand{\Muaprime}[1][\omega ']{M_{a'\!,\mkern1mu#1}}
\newcommand{\iotaa}{\iota_a}
\newcommand{\etaa}{\eta}
\newcommand{\omegaeta}{\omega(\etaa)}
\newcommand{\Muaomegaeta}{M_{a,\omegaeta}}
\newcommand{\va}{v_{a,\omega}}
\newcommand{\ZnaZ}{\Z/\na\Z}
\newcommand{\ZnaZinv}{(\ZnaZ)^\times}
\newcommand{\ZeaZ}{\Z/\ea\Z}
\newcommand{\ZeaZinv}{(\ZeaZ)^\times}
\newcommand{\prescriptzeta}[1]{\mathinner{{}^{#1\!}\zeta}\mathclose{}\mathord{}}
\newcommand*{\sctn}[1]{\S#1}
\newcommand*{\subsctn}[1]{\S#1}
\newcommand*{\subsctns}[1]{\S\S#1}
\newcommand*{\tbl}[1]{Table~#1}
\newcommand*{\frml}[1]{Formula~#1}
\newcommand*{\pg}[1]{page~#1}
\newcommand*{\pgs}[2]{pages~#1\rangedash#2}
\newcommand*{\prpstn}[1]{Proposition~#1}
\newcommand*{\thrm}[1]{Theorem~#1}
\newcommand*{\crllr}[1]{Corollary~#1}
\newcommand*{\rmrk}[1]{Remark~#1}
\newcommand*{\rmrks}[1]{Remarks~#1}
\newcommand*{\lmm}[1]{Lemma~#1}
\newcommand*{\exmpl}[1]{Example~#1}
\newcommand*{\exps}[1]{expos\'e~#1}
\title{Isotypic Decomposition of the Cohomology and Factorization of the Zeta Functions of Dwork Hypersurfaces}
\author{Philippe Goutet}
\begin{document}

\maketitle

\begin{abstract}
The aim of this article is to illustrate, on the Dwork hypersurfaces $x_1^n + \dots + x_n^n - n \parameter x_1 \dots x_n = 0$ (with $n$ an integer $\geq 3$ and $\parameter \in \Fqnonzero$ a parameter satisfying $\parameter^n \neq 1$), how the study of the representation of a finite group of automorphisms of a hypersurface in its etale cohomology allows to factor its zeta function.
\end{abstract}

\subjclass[2000]{Primary 14G10; Secondary 11G25, 14G15, 20C05.}
\keywords{Zeta function factorisation, Dwork hypersurfaces, isotypic decomposition.}

\section{Introduction}\label{section:introduction}

Let $n$ be an integer $\geq 3$ and $\Fq$ a finite field of characteristic $p \neq 2$ not dividing~$n$; to simplify the results, we will assume that $q \equiv 1 \mod n$. We consider the projective hypersurface $\dworkhypersurface \subset \varprojFq{n-1}$ given by
\[x_1^n + \dots + x_n^n - n \parameter x_1 \dots x_n = 0,\]
where $\parameter$\label{definition:parameter} is a non zero parameter belonging to $\Fq$. The zeta function of $\dworkhypersurface$ is defined as
\[\funzeta{\dworkhypersurface/\Fq}{t} = \exp\biggl(\sum_{r=1}^{+\infty}{\card{\dworkhypersurface(\Fqr)} \frac{t^r}{r}}\biggr).\]
We assume that $\parameter^n \neq 1$, so that $\dworkhypersurfaceFqbar = \dworkhypersurface \tensor[\Fq] \Fqbar$ is nonsingular. As $\dworkhypersurface$ is a non-singular hypersurface of $\varproj{n-1}$, we know that the dimension of the etale $\ell$-adic cohomology spaces $\HetladQell{i}{\dworkhypersurfaceFqbar}$ is zero for $i > 2n-4$ or $i<0$ and that, for $0 \leq i \leq 2n-4$,
\[\dim \HetladQell{i}{\dworkhypersurfaceFqbar} = \begin{cases} \delta_i & \text{if 
$i \neq n-2$,} \\ 
\delta_i + \frac{(n-1)^n+(-1)^n(n-1)}{n} & \text{if $i = n-2$,}\end{cases}\]
where $\delta_i = 0$\label{definition:delta.i} if $i$ is odd and $\delta_i = 1$ if $i$ is even (see \subsctn{\ref{subsection:preliminaries:euler.poincare}}). As we will recall in \rmrk{\ref{remark:hetprim}} page~\pageref{remark:hetprim}, the zeta function of $\dworkhypersurface$ is related to how the Frobenius acts on $\HetladQell{n-2}{\dworkhypersurfaceFqbar}$.

We set\label{definition:groupA}\label{definition:cargroupA}
\begin{align*}
& \groupA = \setst{(\zeta_1,\dots,\zeta_n) \in \unityroots{n}(\Fq)^n}{\zeta_1\dots\zeta_n=1}/\set{(\zeta,\dots,\zeta)}; \\
& \cargroupA = \setst{(a_1,\dots,a_n) \in (\ZnZ)^n}{a_1+\dots+a_n=0}/\set{(a,\dots,a)},
\end{align*}
and denote by $[\zeta_1,\dots,\zeta_n]$\label{definition:elt.groupA} the class of $(\zeta_1,\dots,\zeta_n)$ in $\groupA$ and $[a_1,\dots,a_n]$\label{definition:elt.cargroupA} that of $(a_1,\dots,a_n)$ in $\cargroupA$. We will identify the group $\cargroupA$ with the group of characters of $\groupA$ taking values in $\Fqnonzero$. The group $\groupA$ acts on $\dworkhypersurface$ by coordinatewise multiplication; the symmetric group $\Sn$ acts on the right on $\dworkhypersurface$ by permutation of the coordinates
\[[x_1 \colonsep \dots \colonsep x_n]^{\sigma} = [x_{\sigma(1)} \colonsep \dots \colonsep x_{\sigma(n)}],\]
and on the left on $\groupA$ and $\cargroupA$ by
\begin{align*}
& {}^\sigma[\zeta_1,\dots,\zeta_n] = [\zeta_{\sigma^{-1}(1)},\dots,\zeta_{\sigma^{-1}(n)}]; \\
& {}^\sigma[a_1,\dots,a_n] = [a_{\sigma^{-1}(1)},\dots,a_{\sigma^{-1}(n)}].
\end{align*}
The semidirect product $\groupG = \groupA \rtimes \Sn$\label{definition:groupG} acts on the right on $\dworkhypersurface$, and hence on the left on $\HetladQell{n-2}{\dworkhypersurfaceFqbar}$ as the functor $g \mapsto g^*$ is contravariant.

The aim of this article is to describe the structure of $\HetladQell{n-2}{\dworkhypersurface}$ as a $\Qell[\groupG]$-module in order to deduce a factorization of the zeta function of $\dworkhypersurface$. More precisely, we will show that the primitive part of $\HetladQell{n-2}{\dworkhypersurface}$ (as defined in \subsctn{\ref{subsection:preliminaries:euler.poincare}}) admits an isotypic decomposition
\[\bigoplus_{a,\omega}{\Waomega \tensor[\Da] \Vaomega},\]
where $a$ describes $(\Sn \times \ZnZinv) \backslash \cargroupA$, $\omega$ belongs to a certain set of roots of unity (see \crllr{\ref{result:decomp:Ma.Qellbar}} page~\pageref{result:decomp:Ma.Qellbar}), $\Waomega$ is a simple $\Q[\groupG]$-module which is independent of $\ell$, $\Da$ is the division ring $\End_{\Q[\groupG]}(\Waomega)^\textup{opp}$, and $\Vaomega$ is a free module over $\Da \tensor[\Q] \Qell$ whose rank is independent of $\ell$. Because the Frobenius stabilizes these isotypic spaces, its characteristic polynomial splits in as many factors (the idea to use this method is inspired by an argument given in \cite[\subsctn{6.2}]{HKS}).

The first step is to decompose the $\Qellbar[G]$-module $\HetladQellbar{n-2}{\dworkhypersurfaceFqbar}$; we follow the same method Br\"unjes used for the case $\parameter = 0$ (Fermat hypersurface), but, thanks to a more powerful trace formula, we avoid the tedious induction of \cite[\prpstn{11.5}]{Brunjes}. Our methods can be generalized to other families of hypersurfaces, allow us to obtain factorizations slightly finer than those of \cite{Kloosterman} (who uses the $p$-adic Monsky-Washnitzer cohomology), and also allow us to express each factor as the norm of a polynomial with coefficients in a certain finite extension of $\Q$, hence explaining a numerical observation of Candelas, de la Ossa and Rodriguez-Villegas in the case $n=5$ where this extension is $\Q(\sqrt{5})$ (see \cite[\tbl{12.1} \pg{133}]{CdlORV.II}\footnote{They make this observation only in the case $\parameter=0$, but their numerical data in \sctn{13.3} suggests the same phenomenon happens when $\parameter \neq 0$ and $q \equiv 1 \mod 5$.}). Let us also mention that, in a recent article, \cite{Katz.another.look} studies the action of $\groupA$ (but not of $\groupA \rtimes \Sn$) on the cohomology of $\dworkhypersurface$ and establishes a motivic link between $\dworkhypersurface$ and objects of hypergeometric type.

The article is organized as follows. After preliminaries (\sctn{\ref{section:preliminaries}}), we describe the structure of $\HetladQellbar{n-2}{\dworkhypersurfaceFqbar}$ as a $\Qellbar[\groupA]$-module (\sctn{\ref{section:Qellbar[A]}}) and then as a $\Qellbar[\groupG]$-module (\sctn{\ref{section:Qellbar[G]}}). We then deduce the structure of the $\Qell[\groupG]$-module $\HetladQell{n-2}{\dworkhypersurfaceFqbar}$ (\sctn{\ref{section:Qell[G]}}) and explain the link between this structure and the existence of a factorisation of the zeta function of $\dworkhypersurface$ (\sctn{\ref{section:fact.zeta}}). An index of all notations introduced in the article is given in \sctn{\ref{appendix:notations}} and a table of the main formulas appears in \sctn{\ref{appendix:formulas}}.

\section{Preliminaries}\label{section:preliminaries}

We begin by recalling a Lefschetz-type trace formula by Deligne and Lusztig which allows to express the alternating sum of the traces of an automorphism on the $\ell$-adic cohomology spaces as the Euler\namedash Poincar\'e characteristic of the fixed-point scheme of this automorphism. We then recall the value of this Euler\namedash Poincar\'e characteristic in the cases we will encounter in what follows (smooth projective hypersurfaces). Finally, we link the trace of an element of $\groupG$ to the Euler\namedash Poincar\'e characteristic of a subscheme of fixed points.

\subsection{Lefschetz trace formula}\label{subsection:preliminaries:lefschetz}

Let us recall that the Euler\namedash Poincar\'e characteristic\label{definition:euler-poincare.caract} of a proper scheme over $\Fpbar$ is given by
\[\eulerpoincarecaract(X) = \sum_{i=0}^{2\dim X}{(-1)^i \dim \HetladQell{i}{X}},\]
where $\ell$ is a prime number $\neq p$. It is an integer independent of $\ell$.

\begin{theorem}\label{theorem:trace.lefschetz:EP}
Let $X$ be a proper scheme over $\Fpbar$. If $f$ is an automorphism of $X$ of finite order prime to $p$, and if $X^f$\label{definition:Xf} denotes the fixed-point subscheme of $f$ of the scheme $X$, then
\[\sum_{i=0}^{2\dim X}{(-1)^i\traceh{f^*}{\HetladQell{i}{X}}} = \eulerpoincarecaract(X^f).\]
\end{theorem}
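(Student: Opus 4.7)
The plan is to deduce the formula from the general Lefschetz\namedash Verdier trace formula of SGA~5, following the approach of Deligne and Lusztig (this is essentially Theorem~3.2 of their Annals paper on representations of reductive groups over finite fields).

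First, I would apply the Lefschetz\namedash Verdier formula to the graph $\Gamma_f \subset X \times X$ of the automorphism $f$. Since $X$ is proper over $\Fpbar$, this yields an equality
\[\sum_{i=0}^{2\dim X}{(-1)^i\traceh{f^*}{\HetladQell{i}{X}}} = \sum_{Z} \operatorname{loc}_Z(f),\]
where $Z$ runs over the connected components of the fixed-point subscheme $X^f$ and $\operatorname{loc}_Z(f)$ denotes the local term of Lefschetz\namedash Verdier attached to $Z$.

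Second, I would identify each local term with $\eulerpoincarecaract(Z)$. This is precisely where the hypothesis that $f$ has order prime to $p$ is used: since $\langle f \rangle$ has order invertible in $\Qell$, averaging over this cyclic group is well-behaved, and a tameness argument allows one to linearize the action of $\langle f \rangle$ in a formal neighborhood of each fixed point. An explicit computation in this linearized setting then gives $\operatorname{loc}_Z(f) = \eulerpoincarecaract(Z)$, and summing over the components of $X^f$ yields the desired formula.

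The main obstacle is the second step, i.e.\ the identification of the local terms. One clean option is to invoke the Deligne\namedash Lusztig formulation directly. Alternatively, one can reduce to the case of smooth $X$ by an equivariant resolution or compactification argument (equivariance is preserved because $\lvert\langle f\rangle\rvert$ is invertible in the coefficient ring) and then perform the local computation by hand; in either approach, the tameness condition $\gcd(\operatorname{ord}(f),p)=1$ is exactly what ensures each local contribution collapses to the Euler\namedash Poincar\'e characteristic of the corresponding component of $X^f$, without any more subtle ramification corrections.
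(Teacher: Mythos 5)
The paper proves this result by citation alone: its entire proof reads ``See \cite[\thrm{3.2}, \pg{119}]{Deligne.Lusztig}.'' Your fallback option of invoking the Deligne\namedash Lusztig statement directly therefore coincides with what the paper does, and to that extent the proposal is acceptable. The route you actually develop, however, is not the argument behind that citation, and taken as a self-contained proof it has a genuine gap at its central step. Identifying the Lefschetz\namedash Verdier local term attached to a component $Z$ of $X^f$ with $\eulerpoincarecaract(Z)$ is not ``an explicit computation'': here $X$ is only assumed proper (possibly singular), the graph of $f$ need not meet the diagonal transversally, and evaluating Lefschetz\namedash Verdier local terms in such generality is precisely the hard part of this circle of ideas (it is the content of the work surrounding Deligne's conjecture on local terms; even in the tame, finite-order case it is a substantial theorem, not a linearization in a formal neighborhood). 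Your proposed reduction to smooth $X$ by equivariant resolution is also unavailable, since resolution of singularities is not known in characteristic $p$.

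By contrast, Deligne and Lusztig avoid local terms altogether. They establish additivity of Lefschetz numbers over a partition of $X$, descend $X$ and $f$ to a finite subfield over which $f$ commutes with the Frobenius $F$, and apply Grothendieck's trace formula to the composites $f \circ F^n$ for $n$ large: these are Frobenius endomorphisms of twisted forms of $X$, so their Lefschetz numbers count rational points, and those fixed points coincide with the fixed points of $F^n$ acting on $X^f$ \namedash{} this identification is where the hypothesis that the order of $f$ is prime to $p$ enters. Comparing the two resulting sequences of the form $n \mapsto \sum_i a_i \lambda_i^n$ and evaluating at $n=0$ yields the formula. If you intend to give more than the bare citation, this is the argument you would need to reproduce; the Lefschetz\namedash Verdier route as you sketch it does not close.
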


\begin{proof}
See \cite[\thrm{3.2}, \pg{119}]{Deligne.Lusztig}.
\end{proof}

\subsection{Euler\texorpdfstring{\namedash}{-}Poincar\'e characteristic of a non-singular hypersurface}\label{subsection:preliminaries:euler.poincare}

In this \subsctn{\ref{subsection:preliminaries:euler.poincare}}, exceptionally, we do not assume that $n \geq 3$.

\begin{theorem}[Hirzebruch formula]\label{theorem:hirzebruch}
Let $n$ be an integer $\geq 1$ and $f \in \Fpbar[x_1,\dots,\allowbreak x_n]$ a homogeneous polynomial of degree $d$ such that $f$, $\dpart{f}{x_1}$, \dots, $\dpart{f}{x_n}$ have no common zero in $\Fpbar^n$ except $(0,\dots,0)$. Then the hypersurface $X \subset \varprojFpbar{n-1}$ defined by $f = 0$ is non-singular (and irreducible if $n \geq 3$) and its Euler\namedash Poincar\'e characteristic is
\[\eulerpoincarecaract(X) = (n-1) + \frac{(1-d)^n+(d-1)}{d}.\]
\end{theorem}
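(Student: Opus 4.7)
The plan is to compute $\eulerpoincarecaract(X)$ as the degree of the top Chern class of the tangent bundle of $X$, then extract an explicit combinatorial expression. First I would note that the hypothesis on $f$ and the partials $\dpart{f}{x_i}$ is precisely the Jacobian criterion ensuring that $X$ is smooth of pure dimension $n-2$; irreducibility for $n\geq 3$ then follows from a Lefschetz hyperplane argument giving connectedness of $X$, which combined with smoothness upgrades to irreducibility.

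The core input is the conormal exact sequence
\[0 \to T_X \to T_{\varprojFpbar{n-1}}|_X \to \mathcal{O}_X(d) \to 0,\]
whose normal term is $\mathcal{O}_X(d)$ because $X$ is the zero locus of a single section of $\mathcal{O}(d)$, combined with the Euler sequence giving $c(T_{\varprojFpbar{n-1}}) = (1+h)^n$ where $h$ denotes the hyperplane class. Taking total Chern classes in $\ell$-adic étale cohomology and restricting $h$ to $X$ yields
\[c(T_X) = \frac{(1+h)^n}{1+dh}.\]
The Chern\namedash Gauss\namedash Bonnet identity $\eulerpoincarecaract(X) = \deg c_{n-2}(T_X)$, valid in $\ell$-adic cohomology for smooth projective varieties, then reduces the problem to extracting the coefficient of $h^{n-2}$ in this series. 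Using $\deg h^{n-2}|_X = d$ (since $X$ has degree $d$ in $\varprojFpbar{n-1}$) and expanding, this coefficient equals $\sum_{j=0}^{n-2}\binom{n}{j}(-d)^{n-2-j}$, so
\[\eulerpoincarecaract(X) = \frac{1}{d}\sum_{j=0}^{n-2}\binom{n}{j}(-d)^{n-j}.\]

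To finish, I would complete the sum to the full binomial expansion: the missing terms $j=n-1$ and $j=n$ contribute $-nd$ and $1$ respectively, so
\[\sum_{j=0}^{n-2}\binom{n}{j}(-d)^{n-j} = (1-d)^n + nd - 1,\]
and dividing by $d$ gives $\eulerpoincarecaract(X) = n + \frac{(1-d)^n - 1}{d} = (n-1) + \frac{(1-d)^n + (d-1)}{d}$, which is the desired formula. The only non-routine step is invoking Chern\namedash Gauss\namedash Bonnet in positive characteristic: one either cites the general étale version from SGA, or, more elementarily, lifts $X$ to a smooth hypersurface over $W(\Fpbar)$ (always possible for a smooth projective hypersurface, since the smooth locus in the space of degree-$d$ forms is open and non-empty in every characteristic dividing neither $d$ nor $n$) and applies smooth and proper base change to reduce to the classical complex case, where the identity is standard.
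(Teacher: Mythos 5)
Your proof is correct, but it takes a genuinely different route from the paper. The paper does not compute Chern classes at all: for $n \geq 3$ it simply cites Corollary~7.5.$(iii)$ of SGA5, expos\'e~VII, which gives $\eulerpoincarecaract(X) = d\sum_{i=0}^{n-2}{(-1)^{n-i}\binom{n}{i}d^{n-2-i}}$ for a smooth connected hypersurface of degree $d$ in $\varprojFpbar{n-1}$, and then performs the same completion of the binomial sum to $(1-d)^n + nd - 1$ that you do; the remaining cases $n=2$ (where $X$ is $d$ distinct points, $\eulerpoincarecaract(X)=d$) and $n=1$ (where $X=\emptyset$) are checked by hand. What you do instead is essentially reprove the cited SGA5 corollary: the Euler and normal-bundle sequences give $c(T_X) = (1+h)^n/(1+dh)$, and the \'etale Gauss--Bonnet formula $\eulerpoincarecaract(X) = \deg c_{n-2}(T_X)$ (the self-intersection formula for the diagonal, itself available in SGA5) produces the same sum, with identical algebra afterwards. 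Your approach is more self-contained geometrically, at the price of justifying Gauss--Bonnet in characteristic $p$; your first option (citing SGA) is the clean one, whereas in the lifting alternative your parenthetical justification is off the mark: one lifts the coefficients of $f$ arbitrarily to the Witt vectors and concludes smoothness of the lift from properness together with openness of the smooth locus, with no genericity argument about the space of degree-$d$ forms and no condition relating $p$ to $d$ or $n$. Two minor points worth a sentence in your write-up: the statement includes $n=1$ and $n=2$, where the bundle-theoretic argument degenerates (for $n=2$ it still works, with $c_0$ and $\deg[X]=d$; for $n=1$, $X=\emptyset$ and both sides vanish), and the paper treats these cases separately; also, as you implicitly use, smoothness follows from the stated hypothesis alone at points of $X$, with no appeal to the Euler relation, so the argument is valid even when $p$ divides $d$.
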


\begin{proof}
If $n \geq 3$, we use \crllr{7.5.$(iii)$} of \cite[\exps{VII}]{SGA5}: indeed, the subscheme $X$ of $\varprojFpbar{n-1}$ is smooth, connected and of dimension $n-2$; its Euler\namedash Poincar\'e characteristic is hence
\begin{align*}
\eulerpoincarecaract(X)
& = d \sum_{i=0}^{n-2}{(-1)^{n-i}\binomial{n}{i} d^{n-2-i}} = \frac{1}{d} \sum_{i=0}^{n-2}{(-1)^{n-i}\binomial{n}{i} d^{n-i}} \\
& = \frac{(1-d)^n + nd - 1}{d},
\end{align*}
which is the announced formula. If $n=2$, the hypersurface $X$ of $\varprojFpbar{1}$ consists of $d$ distinct points and so $\eulerpoincarecaract(X) = d$, which shows the result as $(2-1) + \frac{1}{d}[(1-d)^2+(d-1)] = d$. Finally, if $n = 1$, $X = \emptyset$ and so $\eulerpoincarecaract(X) = 0$, which also shows the result in this case.
\end{proof}

\begin{remark}\label{remark:hetprim}
When $n \geq 3$, \thrm{\ref{theorem:hirzebruch}} can be refined as follows. We keep the same notations and denote by $j$ the canonical injection $X \to \varprojFpbar{n-1}$. By the Weak Lefschetz Theorem, (see for example \cite[\crllr{9.4}, \pg{106}]{Freitag.Kiehl}), for $i < n-2$ (respectively $i=n-2$), the linear map $j^* \colon \HetladQell{i}{\varprojFpbar{n-1}} \to \HetladQell{i}{X}$ is bijective (respectively injective). If we set $\delta_i = 0$ if $i$ odd and $\delta_i = 1$ if $i$ is even, we thus have $\dim \HetladQell{i}{X} = \delta_i$ for $i < n-2$, and this result stays valid for $n-2 < i \leq 2(n-2)$ by Poincar\'e duality. For $i = n-2$, the image of the map $j^* \colon \HetladQell{n-2}{\varprojFpbar{n-1}} \to \HetladQell{n-2}{X}$ has dimension $\delta_i$. We will denote it by $\HetladQellinprim{n-2}{X}$\label{definition:cohom.et.inprim} and set $\HetladQellprim{n-2}{X} = {\HetladQell{n-2}{X}} / {\HetladQellinprim{n-2}{X}}$\label{definition:cohom.et.prim}. Because the Frobenius acts as the multiplication by $q^{(n-2)/2}$ on $\HetladQellinprim{n-2}{\dworkhypersurfaceFqbar}$ and by multiplication by $q^i$ on each $\HetladQell{2i}{\dworkhypersurfaceFqbar}$, we have
\[\funzeta{\dworkhypersurface/\Fq}{t} = \frac{\deth{1-t\Frobcohom}{\HetladQellprim{n-2}{\dworkhypersurfaceFqbar}}^{(-1)^{n-1}}}{(1-t)(1-qt)\dots(1-q^{n-2}t)}.\]
\end{remark}

\subsection{Character of \texorpdfstring{$G$}{G} acting on \texorpdfstring{$\HetladQellprim{n-2}{\dworkhypersurfaceFqbar}$}{Hn-2(X,Ql)prim}}\label{subsection:preliminaries:character.values}

The isomorphism class of a $\Qell[\groupG]$-module is completely determined by its character. In this~\subsctn{\ref{subsection:preliminaries:character.values}}, we will express in terms of Euler\namedash Poincar\'e characteristics the values of the character of the $\Qell[\groupG]$-module $\HetladQellprim{n-2}{\dworkhypersurfaceFqbar}$ for the elements $g \in \groupG$ which are of order prime to $p$.

\begin{lemma}\label{result:trivial.action.Hi}
Each $g \in \groupG$ acts as the identity on $\HetladQellinprim{n-2}{\dworkhypersurfaceFqbar}$ and on $\HetladQell{i}{\dworkhypersurfaceFqbar}$ when $i \neq n-2$.
\end{lemma}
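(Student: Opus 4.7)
The plan is to reduce the statement to a similar claim on $\HetladQell{i}{\varprojFqbar{n-1}}$ via the $G$-equivariant inclusion $j \colon \dworkhypersurfaceFqbar \into \varprojFqbar{n-1}$, and then to handle the $i > n-2$ range via Poincar\'e duality.

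First I would point out that the entire action of $\groupG = \groupA \rtimes \Sn$ on $\dworkhypersurface$ is the restriction of an action on $\varprojFqbar{n-1}$: the group $\groupA$ acts by coordinatewise multiplication, well-defined on $\varproj{n-1}$ after quotienting by the diagonal $(\zeta,\dots,\zeta)$, and $\Sn$ acts by permuting coordinates, so $\groupG$ embeds into $\PGL{n}{\Fqbar}$. Since $\dworkhypersurface$ is stable under this action (that is precisely why we defined the action in the first place), the inclusion $j$ is $\groupG$-equivariant, and consequently the pullback $j^* \colon \HetladQell{i}{\varprojFqbar{n-1}} \to \HetladQell{i}{\dworkhypersurfaceFqbar}$ is a morphism of $\Qell[\groupG]$-modules.

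Next I would establish that $\groupG$ acts trivially on $\HetladQell{i}{\varprojFqbar{n-1}}$ for all $i$. Odd-dimensional cohomology vanishes, while the even-dimensional part $\HetladQell{2k}{\varprojFqbar{n-1}}$ is one-dimensional, spanned by $h^k$ where $h = c_1(\mathcal{O}(1))$ is the hyperplane class. Every automorphism of $\varprojFqbar{n-1}$ lies in $\PGL{n}{\Fqbar}$ and hence pulls back $\mathcal{O}(1)$ to $\mathcal{O}(1)$ (the only ample generator of $\mathrm{Pic}(\varprojFqbar{n-1}) = \Z$), so $g^* h = h$ and the action on all $\HetladQell{i}{\varprojFqbar{n-1}}$ is trivial.

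I can then read off the lemma. For $0 \leq i < n-2$, Weak Lefschetz (recalled in \rmrk{\ref{remark:hetprim}}) gives that $j^*$ is an isomorphism, so $\groupG$ acts trivially on $\HetladQell{i}{\dworkhypersurfaceFqbar}$. For $i = n-2$, by definition $\HetladQellinprim{n-2}{\dworkhypersurfaceFqbar} = j^*\bigl(\HetladQell{n-2}{\varprojFqbar{n-1}}\bigr)$ is the $\groupG$-equivariant image of something on which $\groupG$ acts trivially, hence $\groupG$ acts trivially there as well. For $n-2 < i \leq 2(n-2)$, I would invoke Poincar\'e duality to get a $\groupG$-equivariant isomorphism $\HetladQell{i}{\dworkhypersurfaceFqbar} \iso \HetladQell{2(n-2)-i}{\dworkhypersurfaceFqbar}^{\vee}(n-2-i)$, noting that the Tate twist alters only the Frobenius action and not that of the geometric group $\groupG$; since $2(n-2)-i < n-2$, the previous case applies.

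The only subtle point, hence the step I would check carefully, is that Poincar\'e duality is indeed $\groupG$-equivariant up to a Tate twist, and that this twist is harmless for a group of geometric automorphisms; everything else is formal once the equivariance of $j$ is noted.
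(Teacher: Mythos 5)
Your proof is correct and follows essentially the same strategy as the paper: extend the $\groupG$-action to $\varprojFqbar{n-1}$, use the $\groupG$-equivariance of $j^*$ together with Weak Lefschetz and the definition of the inprimitive part, and dispose of the range $i>n-2$ by duality. The one genuine divergence is in the key sub-lemma that automorphisms of $\varprojFqbar{n-1}$ act trivially on $\HetladQell{i}{\varprojFqbar{n-1}}$: you argue via the hyperplane class, using that any automorphism preserves the ample generator $\mathcal{O}(1)$ of $\mathrm{Pic}(\varprojFqbar{n-1})\iso\Z$, whereas the paper (\lmm{\ref{lemme:h*surPn}}) notes that the action on a space of dimension $\leq 1$ is by homotheties and hence factors through an abelian quotient of $\PGL{n}{\Fqbar}$, which is trivial because $\PGL{n}{\Fqbar}$ is perfect; both arguments are standard and correct, the paper's avoiding any mention of Chern classes or twists. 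Your explicit treatment of $n-2<i\leq 2(n-2)$ via $\groupG$-equivariant Poincar\'e duality fills in a step the paper leaves implicit (its \rmrk{\ref{remark:hetprim}} invokes duality only for the dimension count), and your justification is sound since $g^*$ preserves cup products and the trace map; only note that the Tate twist in your duality statement should be $-(n-2)$ rather than $n-2-i$, a slip that is immaterial since, as you say, the twist only affects the Frobenius and not the action of $\groupG$.
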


\begin{proof}
As $g$ is the restriction of an automorphism of $\varprojFqbar{n-1}$, it results from \rmrk{\ref{remark:hetprim}} and the following lemma.
\end{proof}

\begin{lemma}\label{lemme:h*surPn}
If $h$ is an automorphism of $\varprojFqbar{n-1}$, then $h^*$ acts as the identity on $\HetladQell{i}{\varprojFqbar{n-1}}$ for all $i$.
\end{lemma}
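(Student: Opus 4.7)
The plan is to exploit the fact that $\bigoplus_i \HetladQell{i}{\varprojFqbar{n-1}}$ is a truncated polynomial algebra generated in cohomological degree $2$ by the first Chern class $c = c_1(\mathcal{O}(1))$; since each nonzero component is one-dimensional, $h^*$ acts by a scalar on each, and multiplicativity reduces everything to computing the scalar on $H^2$, which a short Picard-group argument pins down to $1$.

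More precisely, I would first recall that $\HetladQell{i}{\varprojFqbar{n-1}}$ vanishes unless $i$ is even with $0 \leq i \leq 2(n-1)$, in which case it is one-dimensional, generated by $c^{i/2}$ where $c \in \HetladQell{2}{\varprojFqbar{n-1}}$ is the first Chern class of the hyperplane bundle. Hence $h^*$ acts on $\HetladQell{2i}{\varprojFqbar{n-1}}$ as multiplication by some $\lambda_i \in \Qell$, with $\lambda_0 = 1$. By functoriality of cup product, $h^*$ is a ring homomorphism, so $h^*(c^i) = (h^*c)^i$ and $\lambda_i = \lambda_1^i$; it thus suffices to prove $\lambda_1 = 1$.

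Next, I would use the Picard group: since $\mathrm{Pic}(\varprojFqbar{n-1})$ is infinite cyclic, generated by $[\mathcal{O}(1)]$, and $h$ is an automorphism, $h^*\mathcal{O}(1) \simeq \mathcal{O}(k)$ with $k = \pm 1$. The value $k = -1$ is excluded because $h^*$ induces an isomorphism on global sections, so $h^*\mathcal{O}(1)$ has the same (non-zero) space of sections as $\mathcal{O}(1)$, whereas $\mathcal{O}(-1)$ has none. Functoriality of the first Chern class in $\ell$-adic étale cohomology then gives $h^*(c) = c_1(h^*\mathcal{O}(1)) = c_1(\mathcal{O}(1)) = c$, so $\lambda_1 = 1$.

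The only non-trivial ingredients are two standard facts from étale cohomology: the identification of the cohomology ring of projective space as a truncated polynomial algebra in the hyperplane class, and the functoriality of the first Chern class. The underlying geometric content is simply that an automorphism of $\mathbb{P}^{n-1}$ cannot move the hyperplane class. A more conceptual alternative would invoke that $\Aut(\varprojFqbar{n-1}) = PGL_n$ is a connected algebraic group and therefore must act trivially on $\ell$-adic cohomology, but this requires heavier machinery than the elementary Chern-class route sketched above.
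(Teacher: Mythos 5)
Your argument is correct, but it is genuinely different from the one in the paper. The paper's proof (credited to J.~Grivaux in the acknowledgments) is purely group-theoretic: since each $\HetladQell{i}{\varprojFqbar{n-1}}$ has dimension $0$ or $1$, the map $u \mapsto u^*$ is a homomorphism from $\PGL{n}{\Fqbar}$ to the abelian group $\Qell^\times$, and such a homomorphism is trivial because $\PGL{n}{\Fqbar}$ is equal to its own commutator subgroup over an algebraically closed field. Your route instead uses the multiplicative structure of the cohomology of projective space (a truncated polynomial ring on the hyperplane class $c$), the functoriality of $c_1$, and the computation of $h^*$ on $\mathrm{Pic}(\varprojFqbar{n-1}) \simeq \Z$, where the sign ambiguity $h^*\mathcal{O}(1) \simeq \mathcal{O}(\pm 1)$ is correctly resolved by comparing global sections. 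Each approach has its merits: the paper's argument needs only the Betti numbers of $\varproj{n-1}$ and a fact about abstract perfectness of $\PGL{n}{\Fqbar}$, and it applies verbatim to any variety all of whose cohomology groups are at most one-dimensional and which carries an action of a perfect group; yours needs the ring structure and Chern classes but is more robust in a different direction, since it identifies \emph{which} cohomology classes are fixed (the powers of the hyperplane class) and would still give information if the automorphism group were not perfect. Your closing remark about the connectedness of $PGL_n$ as an algebraic group is also a valid third proof, and is in fact the conceptual source of the paper's argument.
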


\begin{proof}
The group $\PGL{n}{\Fqbar}$ acts on the right on $\HetladQell{i}{\varprojFqbar{n-1}}$ by $u \mapsto u^*$; as $\HetladQell{i}{\varprojFqbar{n-1}}$ is of dimension $0$ or $1$, this action is by homothety, and thus factors by an abelian quotient of $\PGL{n}{\Fqbar}$. Since $\Fqbar$ is algebraically closed, $\PGL{n}{\Fqbar}$ is equal to its commutator subgroup and thus has no nonzero abelian quotient. Hence, for all $u \in \PGL{n}{\Fqbar}$, $u^* = \Id$.
\end{proof}

\begin{theorem}\label{result:generic.formula.trace.G}
If $g \in G$ is of order prime to $p$, then
\begin{equation}\label{formula:lefschetz.trace:EP}
\traceh{g^*}{\HetladQellprim{n-2}{\dworkhypersurfaceFqbar}} = (-1)^{n-1}\Big((n-1) - \eulerpoincarecaract(\dworkhypersurfaceFqbar^g)\Big).
\end{equation}
\end{theorem}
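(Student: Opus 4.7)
The plan is to apply the Lefschetz trace formula of \thrm{\ref{theorem:trace.lefschetz:EP}} to $g$ (which is valid by assumption since $g$ has order prime to $p$) and then eliminate all the contributions coming from the cohomology groups on which $g^*$ acts as the identity, using \lmm{\ref{result:trivial.action.Hi}}. The rest is just bookkeeping with the $\delta_i$.

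\textbf{Step 1.} By \thrm{\ref{theorem:trace.lefschetz:EP}} applied to $g$ acting on $\dworkhypersurfaceFqbar$,
\[\eulerpoincarecaract(\dworkhypersurfaceFqbar^g) = \sum_{i=0}^{2(n-2)}{(-1)^i \traceh{g^*}{\HetladQell{i}{\dworkhypersurfaceFqbar}}}.\]

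\textbf{Step 2.} By \lmm{\ref{result:trivial.action.Hi}}, for $i \neq n-2$ the endomorphism $g^*$ acts trivially on $\HetladQell{i}{\dworkhypersurfaceFqbar}$, so its trace equals $\dim \HetladQell{i}{\dworkhypersurfaceFqbar} = \delta_i$. For $i = n-2$, the short exact sequence
\[0 \to \HetladQellinprim{n-2}{\dworkhypersurfaceFqbar} \to \HetladQell{n-2}{\dworkhypersurfaceFqbar} \to \HetladQellprim{n-2}{\dworkhypersurfaceFqbar} \to 0\]
of $\Qell[\groupG]$-modules together with \lmm{\ref{result:trivial.action.Hi}} (which gives trace $\delta_{n-2}$ on the inprimitive part) yields
\[\traceh{g^*}{\HetladQell{n-2}{\dworkhypersurfaceFqbar}} = \delta_{n-2} + \traceh{g^*}{\HetladQellprim{n-2}{\dworkhypersurfaceFqbar}}.\]

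\textbf{Step 3.} Substituting into the formula of Step~1 and noting that $\sum_{i=0}^{2(n-2)} (-1)^i \delta_i = n-1$ (there are exactly $n-1$ even integers in this range, each contributing $1$), we obtain
\[\eulerpoincarecaract(\dworkhypersurfaceFqbar^g) = (n-1) + (-1)^{n-2}\traceh{g^*}{\HetladQellprim{n-2}{\dworkhypersurfaceFqbar}}.\]
Solving for the trace and using $(-1)^{n-2} = -(-1)^{n-1}$ gives the announced identity~\eqref{formula:lefschetz.trace:EP}.

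There is no genuine obstacle here: the proof is a direct application of the previously established Lefschetz-type trace formula and the triviality lemma. The only point that deserves care is the additivity of the trace along the short exact sequence defining $\HetladQellprim{n-2}{\dworkhypersurfaceFqbar}$, together with keeping the sign $(-1)^{n-2}$ straight when rearranging.
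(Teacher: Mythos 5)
Your proof is correct and follows exactly the paper's own argument: apply the Deligne--Lusztig trace formula, use Lemma~\ref{result:trivial.action.Hi} to reduce every trace except the one on the primitive part to $\delta_i$, and solve for the remaining term. The sign bookkeeping and the count $\sum_{i=0}^{2(n-2)}(-1)^i\delta_i = n-1$ are both right.
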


\begin{proof}
Using the trace formula of \thrm{\ref{theorem:trace.lefschetz:EP}}, we can write
\[\sum_{i=0}^{2\dim X}{(-1)^i\traceh{g^*}{\HetladQell{i}{\dworkhypersurfaceFqbar}}} = \eulerpoincarecaract(\dworkhypersurfaceFqbar^g).\]
By \lmm{\ref{result:trivial.action.Hi}}, we have (with, as previously, $\delta_i = 0$ if $i$ is odd and $\delta_i = 1$ if $i$ is even)
\[\traceh{g^*}{\HetladQell{i}{\dworkhypersurfaceFqbar}} = \begin{cases} \delta_i & \text{if $i \neq n-2$,} \\ \delta_i + \traceh{g^*}{\HetladQellprim{i}{\dworkhypersurfaceFqbar}} & \text{if $i=n-2$,}\end{cases}\]
and thus
\[\eulerpoincarecaract(\dworkhypersurfaceFqbar^g) = (n-1) + (-1)^{n-2} \traceh{g^*}{\HetladQellprim{n-2}{\dworkhypersurfaceFqbar}},\]
which is exactly the announced formula.
\end{proof}

\section{Action of \texorpdfstring{$\groupA$}{A} on \texorpdfstring{$\HetladQellbarprim{n-2}{\dworkhypersurfaceFqbar}$}{Hn-2(X,Qlbar)prim}}\label{section:Qellbar[A]}

The irreducible representations over $\Qellbar$ of the finite abelian group $\groupA$ are its characters (of degree~$1$). Finding the structure of the $\Qellbar[\groupA]$-module $\HetladQellbar{n-2}{\dworkhypersurfaceFqbar}$ hence amounts to figuring out the multiplicity of each character of $\groupA$ in the representation $g \mapsto g^*$ of $\groupA$ in $\HetladQellbar{n-2}{\dworkhypersurfaceFqbar}$; it is the aim of this \sctn{\ref{section:Qellbar[A]}}. The choice, in \subsctn{\ref{subsection:Qellbar[A]:cargroup.A.Qellbar}}, of an isomorphism between $\unityroots{n}(\Fq)$ and $\unityroots{n}(\Qellbar)$ allows to identify $\cargroupA$ to the group of characters of $\groupA$ taking values in $\Qellbar$. After determining the character of the $\Qellbar[\groupA]$-module $\HetladQellbarprim{n-2}{\dworkhypersurfaceFqbar}$ in \subsctn{\ref{subsection:Qellbar[A]:character.values.A}}, we will prove in \subsctn{\ref{subsection:decomp.repr.irr.A}} that the multiplicity of $a \in \cargroupA$ is $\ma = \card{(\diffens{\ZnZ}{\set{a_1,\dots,a_n}})}$.

\subsection{Characters of \texorpdfstring{$\groupA$}{A} with values in \texorpdfstring{$\Qellbar$}{Qlbar}}\label{subsection:Qellbar[A]:cargroup.A.Qellbar}

As we only consider the case $q \equiv 1 \mod n$, the group $\unityroots{n}(\Fq)$ consisting of the \nth{$n$} roots of unity of $\Fq$ is isomorphic to the group of \nth{$n$} roots of unity of $\Qellbar$. We call $t$ an isomorphism of $\unityroots{n}(\Fq)$ onto $\unityroots{n}(\Qellbar)$ and use it to identify the group $\cargroupA$ with the group of characters of $\groupA$ taking values in $\Qellbar$ thanks to the isomorphism $[a_1,\dots,a_n] \mapsto ([\zeta_1,\dots,\zeta_n] \mapsto t(\zeta_1)^{a_1}\cdots t(\zeta_n)^{a_n})$.

\subsection{Character values of the \texorpdfstring{$\Qellbar[\groupA]$}{Qlbar[A]}-module \texorpdfstring{$\HetladQellbarprim{n-2}{\dworkhypersurfaceFqbar}$}{Hn-2(X,Qlbar)prim}}\label{subsection:Qellbar[A]:character.values.A}

As $p$ is prime to $n$ by assumption, the elements of $\groupA$ have an order prime to $p$; we may thus use \frml{\eqref{formula:lefschetz.trace:EP}} to obtain the values taken by the characters of the $\Qellbar[\groupA]$-module $\HetladQellbarprim{n-2}{\dworkhypersurfaceFqbar}$.

\begin{theorem}\label{result:trace.z1...zn}
Consider $(\zeta_1,\dots,\zeta_n) \in \unityroots{n}(\Fq)^n$ such that $\zeta_1\dots\zeta_n = 1$ and let $g$ be the corresponding element $[\zeta_1,\dots,\zeta_n]$ of $\groupA$. For all $\zeta \in \unityroots{n}(\Fq)$, denote by $k(\zeta)$\label{definition:kzeta} the number of $i \in \integerinterval{1}{n}$ such that $\zeta_i = \zeta$. We have
\begin{equation}\label{formula:trace.z1...zn}
\traceh{g^*}{\HetladQellbarprim{n-2}{\dworkhypersurfaceFqbar}} = \frac{(-1)^n}{n} \sum_{\zeta \in \unityroots{n}(\Fq)}{(1-n)^{k(\zeta)}}.
\end{equation}
\end{theorem}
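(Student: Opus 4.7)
The plan is to apply \thrm{\ref{result:generic.formula.trace.G}}: since the order of $g$ divides $n$ and is therefore prime to $p$, it suffices to prove
\[\eulerpoincarecaract(\dworkhypersurfaceFqbar^g) = (n-1) + \frac{1}{n}\sum_{\zeta \in \unityroots{n}(\Fq)}{(1-n)^{k(\zeta)}},\]
after which substituting into \frml{\eqref{formula:lefschetz.trace:EP}} immediately yields \eqref{formula:trace.z1...zn}.

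The key geometric observation is the decomposition of $\dworkhypersurfaceFqbar^g$. A point $[x_1 \colonsep \dots \colonsep x_n]$ of $\dworkhypersurfaceFqbar$ is fixed by $g$ if and only if there exists $\lambda \in \Fqbar^\times$ with $\zeta_i x_i = \lambda x_i$ for every $i$, that is, $x_i = 0$ whenever $\zeta_i \neq \lambda$. The admissible values of $\lambda$ are therefore exactly the $\zeta \in \unityroots{n}(\Fq)$ with $k(\zeta) > 0$, and for each such $\zeta$ the associated stratum $X_\zeta$ is the intersection of $\dworkhypersurfaceFqbar$ with the linear subspace defined by $x_i = 0$ for all $i$ with $\zeta_i \neq \zeta$. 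Since the index sets $\setst{i}{\zeta_i = \zeta}$ partition $\integerinterval{1}{n}$, these subspaces are pairwise disjoint in $\varprojFqbar{n-1}$, so $\dworkhypersurfaceFqbar^g = \bigsqcup_\zeta X_\zeta$ and $\eulerpoincarecaract(\dworkhypersurfaceFqbar^g) = \sum_\zeta \eulerpoincarecaract(X_\zeta)$ by additivity.

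Each stratum is then handled by \thrm{\ref{theorem:hirzebruch}}. When $k(\zeta) < n$, at least one coordinate vanishes on the ambient subspace, so the monomial $n\parameter x_1 \cdots x_n$ drops out and $X_\zeta$ is the Fermat hypersurface $\sum_{\zeta_i = \zeta}{x_i^n} = 0$ in $\varprojFqbar{k(\zeta)-1}$, which is non-singular since $p \nmid n$; Hirzebruch then gives
\[\eulerpoincarecaract(X_\zeta) = (k(\zeta)-1) + \frac{(1-n)^{k(\zeta)} + (n-1)}{n},\]
a value which also evaluates to $0$ when $k(\zeta) \in \set{0,1}$ (matching $X_\zeta = \emptyset$). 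The remaining case $k(\zeta) = n$ forces $g$ to be the neutral element of $\groupA$; then $X_\zeta = \dworkhypersurfaceFqbar$, whose Euler characteristic is given by the same formula. Summing over the $n = \card{\unityroots{n}(\Fq)}$ values of $\zeta$ and using $\sum_\zeta k(\zeta) = n$ collapses the linear terms and yields the required expression. The one point requiring care is that this decomposition must be interpreted scheme-theoretically in order to apply additivity of $\eulerpoincarecaract$, but this is automatic here since the strata lie in pairwise disjoint closed subvarieties of $\varprojFqbar{n-1}$.
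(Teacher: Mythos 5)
Your proposal is correct and follows essentially the same route as the paper: decompose the fixed locus $\dworkhypersurfaceFqbar^g$ into the disjoint coordinate strata indexed by $\zeta \in \unityroots{n}(\Fq)$, compute each $\eulerpoincarecaract(X_\zeta)$ via \thrm{\ref{theorem:hirzebruch}} (Fermat hypersurface, the Dwork hypersurface itself, or empty), sum using $\sum_\zeta k(\zeta) = n$, and conclude with \thrm{\ref{result:generic.formula.trace.G}}. The only cosmetic difference is that you fold the cases $k(\zeta) \in \set{0,1}$ into the general formula by noting it evaluates to $0$, where the paper treats them as the empty stratum directly.
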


\begin{proof}
A point of $\dworkhypersurfaceFqbar$ with homogeneous coordinates $[x_1 \colonsep \dots \colonsep x_n]$ is a fixed point of $g$ if and only if $(x_1,\dots,x_n)$ is proportional to $(\zeta_1x_1 , \dots,\zeta_nx_n)$. The proportionality coefficient is necessarily a root of unity $\zeta \in \unityroots{n}(\Fq)$, and we must have $x_i = 0$ if $\zeta_i \neq \zeta$. Hence, the subscheme of fixed points of $g$ of $\dworkhypersurfaceFqbar$ is the disjoint union over $\zeta \in \unityroots{n}(\Fq)$ of the subvarieties
\[Y_{\zeta} = \setst{x \in \dworkhypersurfaceFqbar}{x_i = 0 \text{ if $\zeta_i \neq \zeta$}}\text{.}\]
If $k(\zeta) = n$, we have $Y_{\zeta} = \dworkhypersurfaceFqbar$. If $2 \leq k(\zeta) \leq n-1$, $Y_{\zeta}$ is isomorphic to the hypersurface of $\varproj{k(\zeta)-1}$ defined by $y_1^n + y_2^n + \dots + y_{k(\zeta)}^n = 0$. Finally, if $k(\zeta) = 0$ or $1$, $Y_{\zeta}$ is empty. In each of these cases, we can apply \thrm{\ref{theorem:hirzebruch}} and obtain
\[\eulerpoincarecaract(Y_{\zeta}) = k(\zeta)-1 + \frac{(1-n)^{k(\zeta)}+n-1}{n} = k(\zeta) - \frac{1}{n} + \frac{(1-n)^{k(\zeta)}}{n}.\]
Consequently, since $\sum_{\zeta \in \unityroots{n}(\Fq)}{k(\zeta)} = n$ and $\sum_{\zeta \in \unityroots{n}(\Fq)}{\frac{1}{n}} = 1$,
\[\eulerpoincarecaract(\dworkhypersurfaceFqbar^g) = \sum_{\zeta \in \unityroots{n}(\Fq)}{\eulerpoincarecaract(Y_{\zeta})} = n-1 + \sum_{\zeta \in \unityroots{n}(\Fq)}{\frac{(1-n)^{k(\zeta)}}{n}}.\]
Using trace formula~\eqref{formula:lefschetz.trace:EP} page~\pageref{formula:lefschetz.trace:EP}, we deduce the announced result.
\end{proof}

\begin{remark}
A recent preprint proves, in a more general setting, formulas of the type given in \thrm{\ref{result:trace.z1...zn}} and \thrm{\ref{result:trace:d-cycles}} \pg{\pageref{result:trace:d-cycles}}; see \cite[\crllr{2.5}]{Chenevert.representations.cohomology.symmetries}.
\end{remark}

\subsection{Decomposition in irreducible representations}\label{subsection:decomp.repr.irr.A}

The following theorem gives a simple expression for the multiplicity $\ma$ of a character $a \in \cargroupA$ in the $\Qellbar[\groupA]$-module $\HetladQellbarprim{n-2}{\dworkhypersurfaceFqbar}$.

\begin{theorem}\label{result:mult.a}
The multiplicity of the irreducible character $a = [a_1,\dots,a_n]$ of $\groupA$ in the $\Qellbar[\groupA]$-module $\HetladQellbarprim{n-2}{\dworkhypersurfaceFqbar}$ is\label{definition:ma}
\[\ma = \card{(\diffens{\ZnZ}{\set{a_1,\dots,a_n}})} = n - (\text{number of distinct $a_i$}).\]
\end{theorem}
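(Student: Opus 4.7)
The plan is to compute $\ma$ via character orthogonality on the finite abelian group $\groupA$, which has order $n^{n-2}$. Since its irreducible $\Qellbar$-representations are all one-dimensional, the multiplicity of $a \in \cargroupA$ in $V := \HetladQellbarprim{n-2}{\dworkhypersurfaceFqbar}$ is
\[\ma = \frac{1}{\card{\groupA}}\sum_{g \in \groupA} a(g)^{-1}\,\traceh{g^*}{V}.\]
Both $a$ and the trace pull back via the $n$-to-$1$ projection $\widetilde\groupA := \setst{(\zeta_1,\dots,\zeta_n) \in \unityroots{n}(\Fq)^n}{\zeta_1\cdots\zeta_n = 1} \to \groupA$, so I would replace the sum over $\groupA$ by $\frac{1}{n}\sum_{\widetilde\groupA}$ and substitute the explicit trace formula of \thrm{\ref{result:trace.z1...zn}}.

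To decouple the $n$ indices, I relax the constraint $\zeta_1\cdots\zeta_n=1$ by Fourier inversion (its indicator function on $\unityroots{n}(\Fq)^n$ equals $\tfrac{1}{n}\sum_{c \in \ZnZ}\prod_i t(\zeta_i)^c$), and I rewrite the factor $(1-n)^{k(\zeta)}$ multiplicatively as $\prod_i f_\zeta(\zeta_i)$, where $f_\zeta(\zeta_i) = 1-n$ if $\zeta_i = \zeta$ and $1$ otherwise. The resulting sum over $\unityroots{n}(\Fq)^n$ then factorizes over $i$, and orthogonality of the characters of $\unityroots{n}(\Fq)$ collapses each one-variable inner sum to $n\bigl(\delta_{c,a_i} - t(\zeta)^{c-a_i}\bigr)$. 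Collecting the various powers of $n$ leaves
\[\ma = \frac{(-1)^n}{n}\sum_{\zeta \in \unityroots{n}(\Fq)}\sum_{c \in \ZnZ}\prod_{i=1}^n\bigl(\delta_{c,a_i} - \xi^{c-a_i}\bigr), \qquad \xi := t(\zeta).\]

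The heart of the proof is then to recognize that this inner product is \emph{independent} of $\xi$. Expanding distributively, only subsets $S \subseteq \setst{i}{a_i = c}$ contribute a nonzero term, and for each such $S$ the total power of $\xi$ equals $(n-\card{S})c - \sum_{i \notin S}a_i = nc \equiv 0 \pmod n$, using the defining relation $a_1 + \dots + a_n = 0$ in $\ZnZ$. Hence $\sum_{\zeta}$ contributes merely a factor of $n$, and evaluating the product at $\xi = 1$ gives $0$ when $c \in \set{a_1,\dots,a_n}$ and $(-1)^n$ otherwise. Summing over $c$ and absorbing the overall $(-1)^{2n}=1$ yields exactly $\ma = \card{\diffens{\ZnZ}{\set{a_1,\dots,a_n}}}$. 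The only delicate point in carrying this out is the bookkeeping of normalizations during the passage from $\groupA$ to $\widetilde\groupA$ and then to $\unityroots{n}(\Fq)^n$; once the $\xi$-independence is observed, the combinatorial count is immediate.
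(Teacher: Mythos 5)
Your proof is correct — I checked the normalizations ($\card{\groupA}=n^{n-2}$, the factor $n$ from the fibers of the cover inside $\unityroots{n}(\Fq)^n$, the factor $n^n$ from the $n$ one-variable sums) and the key exponent computation $(n-\card{S})c-\sum_{i\notin S}a_i=nc\equiv 0$, which indeed uses exactly the relation $a_1+\dots+a_n=0$ — but it runs in the opposite direction from the paper's argument. The paper does not invert: it writes the trace of $g^*$ as $\sum_a \ma\, a(g)$, substitutes the \emph{candidate} value $\card{(\diffens{\ZnZ}{\set{a_1,\dots,a_n}})}$ for $\ma$, and checks that this reproduces the trace formula of \thrm{\ref{result:trace.z1...z n}}; the unconstraining over $k\in\ZnZ$ (your Fourier variable $c$) appears there as a translation $a_i\mapsto a_i-k$, and the resulting exponential sum $\sum_{a_i\neq 0,\,\sum a_i=0}\zeta_1^{a_1}\cdots\zeta_n^{a_n}$ is evaluated by an induction on the number of variables (\lmm{\ref{lemma:compute.sum.induction.A}}); linear independence of characters then pins down $\ma$. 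Your route computes $\ma$ directly by orthogonality, and your factorization-plus-$\xi$-independence trick replaces the paper's induction, which is arguably cleaner and derives the answer rather than verifying a guess at the cost of slightly heavier normalization bookkeeping. What the paper's inductive lemma buys is reuse: its generalization (\lmm{\ref{lemme:calcul.somme.rec.G}}) is needed again in \subsctn{\ref{subsection:Qellbar[G]:trace.Sigmaa}} for elements $[\zeta_1,\dots,\zeta_n]\sigma$ with $\sigma$ a product of disjoint cycles, where the constraint becomes $c_1+\dots+c_{n'}\in n'\ZnZ$; your Fourier/factorization method would adapt to that setting too, but the paper gets it from the same induction scheme already set up here.
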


\begin{proof}
Consider $(\zeta_1, \dots, \zeta_n) \in \unityroots{n}(\Fq)^n$ such that $\zeta_1\dots\zeta_n = 1$ and let $g$ be the corresponding element $[\zeta_1,\dots,\zeta_n]$ of $\groupA$. From the definition, we have
\begin{align*}
\traceh{g^*}{\HetladQellbarprim{n-2}{\dworkhypersurfaceFqbar}}
& = \sum_{\substack{a\in\cargroupA}}{\ma \zeta_1^{a_1}\dots\zeta_n^{a_n}}\\
& = \frac{1}{n} \sum_{\substack{(a_1,\dots,a_n)\in(\ZnZ)^n \\ a_1+\dots+a_n=0}}{\ma \zeta_1^{a_1}\dots\zeta_n^{a_n}}.
\end{align*}
We will show that if we replace $\ma$ by the number of elements of $\diffens{\ZnZ}{\set{a_1,\dots,a_n}}$ in the right hand side, we recover \frml{\eqref{formula:trace.z1...zn}} above, which will show the announced result. We write
\begin{align*}
& \frac{1}{n} \sum_{\substack{(a_1,\dots,a_n)\in(\ZnZ)^n\\a_1+\dots+a_n=0}}{\card{(\diffens{\ZnZ}{\set{a_1,\dots,a_n}})} \zeta_1^{a_1}\dots\zeta_n^{a_n}} \\
& \qquad\qquad\qquad = \frac{1}{n} \sum_{\substack{(a_1,\dots,a_n)\in(\ZnZ)^n\\a_1+\dots+a_n=0}}{\bigg(\sum_{\substack{k \in \ZnZ \\ \forall i\text{, }a_i \neq k}}{1}\bigg)\zeta_1^{a_1}\dots\zeta_n^{a_n}} \\
& \qquad\qquad\qquad = \frac{1}{n} \sum_{k \in \ZnZ}{~\sum_{\substack{(a_1,\dots,a_n)\in(\ZnZ)^n\\a_1+\dots+a_n=0\\\forall i\text{, }a_i \neq k}}{\zeta_1^{a_1}\dots\zeta_n^{a_n}}} \\
& \qquad\qquad\qquad = \frac{1}{n} \sum_{k \in \ZnZ}{~\sum_{\substack{(a_1,\dots,a_n)\in(\ZnZ)^n\\a_1+\dots+a_n=0\\\forall i\text{, }a_i \neq 0}}{\zeta_1^{a_1}\dots\zeta_n^{a_n}}} \\
& \qquad\qquad\qquad = \sum_{\substack{(a_1,\dots,a_n)\in(\ZnZ)^n\\a_1+\dots+a_n=0\\\forall i\text{, }a_i \neq 0}}{\zeta_1^{a_1}\dots\zeta_n^{a_n}} \\
& \qquad\qquad\qquad = \sum_{\substack{a_1,\dots,a_n\in\ZnZnonzero\\a_1+\dots+a_n=0}}{\zeta_1^{a_1}\dots\zeta_n^{a_n}}.
\end{align*}
We now conclude by using the following lemma.
\end{proof}

\begin{lemma}\label{lemma:compute.sum.induction.A}
Let $r$ be an integer $\geq 1$ and $\zeta_1$, \dots, $\zeta_r$ elements of $\unityroots{n}(\Fq)$. If $k(\zeta) = k_{(\zeta_1,\dots,\zeta_r)}(\zeta)$ denotes the number of $i \in \integerinterval{1}{r}$ such that $\zeta_i = \zeta$, then
\[\sum_{\substack{a_1,\dots,a_r\in\ZnZnonzero \\ a_1+\dots+a_r=0}} {\zeta_1^{a_1}\dots\zeta_r^{a_r}} = \frac{(-1)^r}{n} \sum_{\zeta \in \unityroots{n}(\Fq)}{(1-n)^{k(\zeta)}}.\]
\end{lemma}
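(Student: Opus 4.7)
The plan is to use the standard character-sum identity for the cyclic group $\ZnZ$,
\[
\frac{1}{n} \sum_{\zeta \in \unityroots{n}(\Fq)} \zeta^{s} = \begin{cases} 1 & \text{if } s = 0 \text{ in } \ZnZ, \\ 0 & \text{otherwise,}\end{cases}
\]
to detach the constraint $a_1 + \dots + a_r = 0$, and thereby convert the left-hand side into a sum over $\zeta \in \unityroots{n}(\Fq)$ of a product of independent one-variable sums. Explicitly, I would rewrite
\[
\sum_{\substack{a_1,\dots,a_r\in\ZnZnonzero \\ a_1+\dots+a_r=0}} \zeta_1^{a_1}\dots\zeta_r^{a_r} = \frac{1}{n} \sum_{\zeta \in \unityroots{n}(\Fq)} \prod_{i=1}^{r} \biggl(\sum_{a \in \ZnZnonzero} (\zeta_i\zeta)^{a}\biggr).
\]

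Next I would evaluate each inner factor. Summing a full geometric series over $\ZnZ$ gives $0$ if $\zeta_i \zeta \neq 1$ and $n$ if $\zeta_i \zeta = 1$; subtracting the $a=0$ term yields
\[
\sum_{a \in \ZnZnonzero} (\zeta_i\zeta)^{a} = \begin{cases} n-1 & \text{if } \zeta_i \zeta = 1, \\ -1 & \text{otherwise.}\end{cases}
\]
For a fixed $\zeta$, the number of indices $i$ with $\zeta_i \zeta = 1$ is exactly $k(\zeta^{-1})$, hence the product over $i$ equals $(n-1)^{k(\zeta^{-1})} (-1)^{r - k(\zeta^{-1})}$.

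Finally I would clean up signs: $(n-1)^{k}(-1)^{r-k} = (-1)^r (1-n)^{k}$, so the expression becomes
\[
\frac{(-1)^r}{n} \sum_{\zeta \in \unityroots{n}(\Fq)} (1-n)^{k(\zeta^{-1})},
\]
and the change of variables $\zeta \mapsto \zeta^{-1}$ (a bijection of $\unityroots{n}(\Fq)$) rewrites this in terms of $k(\zeta)$, giving the announced formula. There is no real obstacle here; this is a routine orthogonality-then-factorization argument, and the only point requiring a bit of care is keeping track of the sign bookkeeping so that the final answer involves $(1-n)^{k(\zeta)}$ rather than a sign-twisted variant.
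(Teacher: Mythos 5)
Your proof is correct, but it takes a genuinely different route from the paper. The paper proves the identity by induction on $r$: it peels off the last variable, writing the constrained sum as a full unconstrained sum over $a_1,\dots,a_{r-1}$ minus the sum constrained by $a_1+\dots+a_{r-1}=0$, evaluates the first via the same one-variable computation ($n-1$ or $-1$) you use, and handles the second by the induction hypothesis, finishing with a small sign computation. You instead detach the constraint directly with the orthogonality filter $\frac{1}{n}\sum_{\zeta\in\unityroots{n}(\Fq)}\zeta^{s}=[s=0]$, which factors the whole sum into a product of independent one-variable sums and yields the answer in one pass; the only bookkeeping is $\zeta_i\zeta=1$ occurring for exactly $k(\zeta^{-1})$ indices and the substitution $\zeta\mapsto\zeta^{-1}$ at the end, both of which you handle correctly. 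Your argument is shorter, avoids induction, and adapts immediately to the paper's later generalization (\lmm{\ref{lemme:calcul.somme.rec.G}}, where the constraint $c_1+\dots+c_r\in n'\ZnZ$ would be filtered by $\frac{1}{n'}\sum_{\zeta\in\unityroots{n'}(\Fq)}$), whereas the paper redoes a second induction there; the paper's inductive version, on the other hand, needs nothing beyond the single "little remark" about $\sum_{a\in\ZnZnonzero}\zeta^a$. One small point worth making explicit in either approach: the identity, with its division by $n$, is to be read in characteristic zero, i.e.\ with the $\zeta_i$ viewed as roots of unity in $\Qellbar$ via the identification $t$ of \subsctn{\ref{subsection:Qellbar[A]:cargroup.A.Qellbar}} — the same convention the paper uses when invoking the lemma.
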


\begin{proof}
We proceed by induction on $r$. For $r=1$, the equality is the relation
\[0 = -\frac{1}{n}\Big((1-n)^1 + (n-1)(1-n)^0\Big).\]
We now assume that $r \geq 2$ and that the result is known for $r-1$. We write
\begin{align*}
\sum_{\substack{a_1,\dots,a_r\in\ZnZnonzero\\a_1+\dots+a_r=0}}{\zeta_1^{a_1}\dots\zeta_r^{a_r}} & = \sum_{\substack{a_1,\dots,a_{r-1}\in\ZnZnonzero\\a_1+\dots+a_{r-1}\neq 0}}{\,\zeta_1^{a_1}\dots\zeta_{r-1}^{a_{r-1}} \zeta_r^{-a_1-\dots-a_{r-1}}} \\
& = \sum_{a_1,\dots,a_{r-1}\in\ZnZnonzero}{\left(\frac{\zeta_1}{\zeta_r}\right)^{a_1} \dots \left(\frac{\zeta_{r-1}}{\zeta_r}\right)^{a_{r-1}} } \\
& \hspace{5em} - \sum_{\substack{a_1,\dots,a_{r-1}\in\ZnZnonzero\\a_1+\dots+a_{r-1}= 0}}{\,\zeta_1^{a_1}\dots\zeta_{r-1}^{a_{r-1}}}.
\end{align*}
Given $\zeta \in \unityroots{n}(\Fq)$, we have
\[\sum_{a \in \ZnZnonzero}{\zeta^a} = \begin{cases} -1 & \text{if $\zeta \neq 1$,} \\ n-1 & \text{if $\zeta = 1$.} \end{cases}\]
This little remark allows to compute the first sum:
\[\sum_{a_1,\dots,a_{r-1}\in\ZnZnonzero}{\left(\frac{\zeta_1}{\zeta_r}\right)^{a_1} \dots \left(\frac{\zeta_{r-1}}{\zeta_r}\right)^{a_{r-1}}} = (-1)^{r - k(\zeta_r)}(n-1)^{k(\zeta_r)-1},\]
where $k(\zeta) = k_{(\zeta_1,\dots,\zeta_r)}(\zeta)$. To compute the second sum, we use the induction assumption:
\[\sum_{\substack{a_1,\dots,a_{r-1}\in\ZnZnonzero\\a_1+\dots+a_{r-1}= 0}}{\zeta_1^{a_1}\dots\zeta_{r-1}^{a_{r-1}}} = \mathinner{\frac{(-1)^{r-1}}{n}} \biggl(\mathinner{}\sum_{\zeta \neq \zeta_r}{(1-n)^{k(\zeta)}}+ (1-n)^{k(\zeta_r)-1}\biggr).\]
We conclude by noting that
\begin{align*}
& (-1)^{r - k(\zeta_r)}(n-1)^{k(\zeta_r)-1} - \frac{(-1)^{r-1}}{n}(1-n)^{k(\zeta_r)-1} \\
& \qquad = -\frac{(-1)^r n(1-n)^{k(\zeta_r)-1}}{n} + \frac{(-1)^{r}}{n}(1-n)^{k(\zeta_r)-1} = \frac{(-1)^r}{n} (1-n)^{k(\zeta_r)}.\qedhere
\end{align*}
\end{proof}

\begin{remark}\label{remark:ma.non.zero}
As a consequence of \thrm{\ref{result:mult.a}}, the multiplicity $\ma$ of the character $a \in \cargroupA$ is nonzero unless $a$ belongs to the orbit of $[0,1,2,\dots,n-1]$ under $\symmetricgroup_n$ (which imposes $n$ odd, or else $1+2+\dots+(n-1)$ is not divisible by $n$).
\end{remark}

\section{Action of \texorpdfstring{$\groupG$}{G} on \texorpdfstring{$\HetladQellbarprim{n-2}{\dworkhypersurfaceFqbar}$}{Hn-2(X,Qlbar)prim}}\label{section:Qellbar[G]}

\subsection{A decomposition of the \texorpdfstring{$\Qellbar[\groupG]$}{Qlbar[G]}-module \texorpdfstring{$\HetladQellbarprim{n-2}{\dworkhypersurfaceFqbar}$}{Hn-2(X,Qlbar)prim}}\label{subsection:Qellbar[G]:isotypic.decomposition.Qellbar[G]}

For every $a$ belonging to $\cargroupA$ identified to the group of characters of $\groupA$ taking values in $\Qellbar$, we denote by $\isotypiccomponentQellbar{a}$\label{definition:composantisotypiqueQellbar.a} the isotypic component relatively to $a$ of the $\Qellbar[\groupA]$-module $\HetladQellbarprim{n-2}{\dworkhypersurfaceFqbar}$ (see \cite[\subsctn{3.4}]{Bourbaki.algebre.viii}). It is a $\Qellbar$-vector space of dimension $\ma$, where $\ma$ is the multiplicity computed in \subsctn{\ref{subsection:decomp.repr.irr.A}}, and we have
\[\HetladQellbarprim{n-2}{\dworkhypersurfaceFqbar} = \bigoplus_{a \in \cargroupA\mathstrut}{\isotypiccomponentQellbar{a}}.\]

The group $\groupG$ acts on the left on $\groupA$ by inner automorphisms, and thus acts on the left on $\cargroupA$: if $g \in \groupA \sigma$, with $\sigma \in \symmetricgroup_n$, and if $a = [a_1,\dots,a_n]$, we have $\ga = \sigmaa = [a_{\sigma^{-1}(1)},\dots,a_{\sigma^{-1}(n)}]$.

Consider $a \in \cargroupA$. Denote by $\classSn{a}$\label{definition:classeSna} the orbit of $a$ under $\symmetricgroup_n$. The stabilizer $\stabilizerGQellbar{a}$\label{definition:GaQellbar} of $a$ in $\groupG$ is equal to $\groupA \rtimes \SaQellbar$\label{definition:SaQellbar}, where $\SaQellbar = \setst{\sigma \in \symmetricgroup_n}{\sigmaa = a}$. We have $g \isotypiccomponentQellbar{a} = \isotypiccomponentQellbar{\ga}$ for all $g \in \groupG$ and the space $\isotypiccomponentQellbar{a}$ is stable by $\stabilizerGQellbar{a}$. The subspace $\bigoplus_{a' \in \classSn{a}}{\isotypiccomponentQellbar{a'}}$ of $\HetladQellbarprim{n-2}{\dworkhypersurfaceFqbar}$ is stable by $\groupG$; it is a $\Qellbar[\groupG]$-module canonically isomorphic to $\Ind_{\stabilizerGQellbar{a}}^{\groupG} \isotypiccomponentQellbar{a}$. We thus deduce the following result.

\begin{theorem}\label{result:isotypicA.decomposition.Qellbar[G]}
Denote by $R \subset \cargroupA$\label{definition:sysclasseSna} a set of representatives of $\Sn\backslash\cargroupA$. The $\Qellbar[\groupG]$-module $\HetladQellbarprim{n-2}{\dworkhypersurfaceFqbar}$ is isomorphic to
\[\bigoplus_{a\in R}\Ind_{\stabilizerGQellbar{a}}^{\groupG} \isotypiccomponentQellbar{a}.\]
\end{theorem}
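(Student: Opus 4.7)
The plan is to deduce the theorem from the $\Qellbar[\groupA]$-isotypic decomposition $\HetladQellbarprim{n-2}{\dworkhypersurfaceFqbar} = \bigoplus_{a \in \cargroupA} \isotypiccomponentQellbar{a}$ by verifying that the $\groupG$-action permutes these components according to the action of $\groupG$ on $\cargroupA$, and then identifying the sum over each orbit as an induced representation. Since $\groupA$ is abelian, the inner action of $\groupA$ on itself is trivial, so the $\groupG$-action on $\cargroupA$ factors through $\groupG/\groupA \iso \Sn$; hence the orbits of $\groupG$ on $\cargroupA$ coincide with the orbits of $\Sn$, and indexing by the set $R$ of representatives of $\Sn\backslash\cargroupA$ is legitimate.

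The key point to check is the compatibility $g\cdot\isotypiccomponentQellbar{a} = \isotypiccomponentQellbar{\ga}$ for $g \in \groupG$ and $a \in \cargroupA$. For $v \in \isotypiccomponentQellbar{a}$ and $h \in \groupA$, one computes $h\cdot(g\cdot v) = g \cdot((g^{-1}hg)\cdot v) = g\cdot(a(g^{-1}hg)\,v) = (\ga)(h)\,(g\cdot v)$, using that $g^{-1}hg \in \groupA$ and the definition of the $\groupG$-action on $\cargroupA$ by conjugation, namely $(\ga)(h) = a(g^{-1}hg)$. This shows $g\cdot v$ lies in $\isotypiccomponentQellbar{\ga}$, and by symmetry (applying $g^{-1}$) the inclusion is an equality.

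From this it follows that for each orbit $\classSn{a}$ the subspace $M_a \mathrel{:=} \bigoplus_{a' \in \classSn{a}} \isotypiccomponentQellbar{a'}$ is $\groupG$-stable, that $\isotypiccomponentQellbar{a}$ is $\stabilizerGQellbar{a}$-stable (with $\stabilizerGQellbar{a} = \groupA\rtimes\SaQellbar$ by the above factoring remark), and that the $\groupG$-translates of $\isotypiccomponentQellbar{a}$ sweep out the orbit. I would then invoke the standard criterion for induced representations: if a $\groupG$-module $M$ contains a $\stabilizerGQellbar{a}$-submodule $W$ such that $M = \bigoplus_{g\stabilizerGQellbar{a} \in \groupG/\stabilizerGQellbar{a}} g\cdot W$, then $M \iso \Ind_{\stabilizerGQellbar{a}}^{\groupG} W$; this applies to $M_a$ and $W = \isotypiccomponentQellbar{a}$ because the nonzero translates $g\cdot\isotypiccomponentQellbar{a} = \isotypiccomponentQellbar{\ga}$ are indexed by $\groupG/\stabilizerGQellbar{a}$ and are linearly independent (being distinct isotypic components). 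Summing over $a \in R$ gives the announced decomposition.

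The only genuinely non-formal step is the compatibility $g\cdot\isotypiccomponentQellbar{a} = \isotypiccomponentQellbar{\ga}$; everything else is a standard invocation of Frobenius reciprocity / the induction-from-stabilizer principle for a permutation action on isotypic components. I expect no real obstacle, since the convention ${}^{g}a(h) = a(g^{-1}hg)$ on characters is precisely the one that makes the compatibility work, and this convention has already been fixed earlier in the paper.
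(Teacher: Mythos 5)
Your proposal is correct and follows essentially the same route as the paper, which deduces the theorem from the $\Qellbar[\groupA]$-isotypic decomposition, the compatibility $g\isotypiccomponentQellbar{a} = \isotypiccomponentQellbar{\ga}$, and the identification of each orbit sum $\bigoplus_{a' \in \classSn{a}}\isotypiccomponentQellbar{a'}$ with $\Ind_{\stabilizerGQellbar{a}}^{\groupG}\isotypiccomponentQellbar{a}$ in the paragraph preceding the statement. Your explicit verification of the compatibility and of the induction criterion simply spells out what the paper asserts without detail.
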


The aim of the rest of this \sctn{\ref{section:Qellbar[G]}} is to determine how the group $\SaQellbar$ acts on $\isotypiccomponentQellbar{a}$. The strategy is the following: after showing that $\SaQellbar$ is a semi-direct product $\SaQellbarprime \rtimes \SigmaQellbar$ (\subsctn{\ref{subsection:Qellbar[G]:structure.SaQellbar}}), we compute $\traceh{\sigma^*}{\HetladQellbarprim{n-2}{\dworkhypersurfaceFqbar}}$ for $\sigma$ a generator of $\SaQellbarprime$ and compare it to the trace of the identity (\subsctn{\ref{subsection:Qellbar[G]:value.trace.identity}}) to deduce that $\SaQellbarprime$ acts as $\signature(\sigma)\Id_{\isotypiccomponentQellbar{a}}$ on $\isotypiccomponentQellbar{a}$ (see \subsctn{\ref{subsection:Qellbar[G]:action.Saprime}}). We then show, using a method similar to \sctn{\ref{section:Qellbar[A]}}, that $\SigmaQellbar$ acts as a multiple of the regular representation (\subsctn{\ref{subsection:Qellbar[G]:character.values.Sigmaa}\rangedash\ref{subsection:Qellbar[G]:action.Sigmaa}}).

The approach we use to study the action of $\SaQellbarprime$ is the same that Br\"unjes used in \cite[\prpstn{11.5}, \pg{197}]{Brunjes} for the case $\parameter = 0$, the only difference being that our trace formula allows us to avoid a tedious proof by induction.

\subsection{Structure of \texorpdfstring{$\SaQellbar$}{Sa}}\label{subsection:Qellbar[G]:structure.SaQellbar}

Consider $a = [a_1,\dots,a_n] \in \cargroupA$, where $(a_1,\dots,a_n)$ is an element of $(\ZnZ)^n$ such that $a_1 + \dots + a_n = 0$. The set of $j \in \ZnZ$ such that $(a_1+j,\dots,a_n+j)$ is a permutation of $(a_1,\dots,a_n)$ is a subgroup of $\ZnZ$; it can be written as $\nprimea\ZnZ$\label{definition:nprimea} for some integer $\nprimea \geq 1$ dividing $n$; let $\da = n/\nprimea$\label{definition:da} be the order of this group. These two integers only depend on $a$ and not on the choice of $a_1$, \dots, $a_n$.

\begin{remark}\label{remark:def.naprime}
For all $b \in \ZnZ$, denote by $I(b)$\label{definition:I(b)} the set of $i \in \set{1,\dots,n}$ such that $a_i = b$. The set $\nprimea \ZnZ$ is the set of $j \in \ZnZ$ such that $I(b+j)$ has the same number of elements as $I(b)$ for all $b \in \ZnZ$.
\end{remark}

\begin{lemma}
There is a permutation $\genSigmaQellbar \in \symmetricgroup_n$\label{definition:genSigmaQellbar} such that
\begin{subproperty}
    \item\label{condition:a} if $1 \leq i \leq n$, we have $a_{\genSigmaQellbar(i)} = a_i + \nprimea$~;
    \item\label{condition:b} $\genSigmaQellbar$ is the product of $\nprimea$ disjoint cycles of length $\da$.
\end{subproperty}
\end{lemma}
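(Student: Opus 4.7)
The plan is to build $\genSigmaQellbar$ by transporting the translation $b \mapsto b + \nprimea$ on $\ZnZ$ (which is at the level of values) to a permutation of the indices $\set{1,\dots,n}$ (the level where $\symmetricgroup_n$ acts). The key input is \rmrk{\ref{remark:def.naprime}}: since $\nprimea \in \nprimea\ZnZ$, the sets $I(b)$ and $I(b+\nprimea)$ have the same cardinality for every $b \in \ZnZ$, so bijections between successive fibers exist and we only need to choose them compatibly.

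First I would look at the orbits of the translation $T \colon b \mapsto b + \nprimea$ on $\ZnZ$. Since $T$ has order $\da = n/\nprimea$, each orbit has length $\da$ and there are exactly $\nprimea$ orbits, which one can describe as $O_{b_0} = \set{b_0, b_0+\nprimea, \dots, b_0+(\da-1)\nprimea}$ for $b_0$ running through a system of representatives. By \rmrk{\ref{remark:def.naprime}}, the common cardinality $c_{b_0} := \card{I(b_0+k\nprimea)}$ does not depend on $k$, and one has $\sum_{b_0} c_{b_0} = \nprimea$ (equivalently, $\sum_{b_0} c_{b_0} \da = n$), which will give the right count of cycles.

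Next I would construct $\genSigmaQellbar$ orbit by orbit. For each orbit $O_{b_0}$, fix an arbitrary enumeration $I(b_0 + k\nprimea) = \set{i_1^{(k)}, \dots, i_{c_{b_0}}^{(k)}}$ for $k = 0, \dots, \da - 1$, and define
\[
\genSigmaQellbar(i_j^{(k)}) = i_j^{(k+1 \bmod \da)} \qquad (1 \leq j \leq c_{b_0},\ 0 \leq k \leq \da-1).
\]
Doing this on every orbit of $T$ determines a permutation of $\set{1,\dots,n}$, because the sets $I(b)$ partition $\set{1,\dots,n}$. Condition~\textit{\ref{condition:a}} is built in: if $i = i_j^{(k)}$, then $a_i = b_0 + k\nprimea$ and $a_{\genSigmaQellbar(i)} = b_0 + (k+1)\nprimea = a_i + \nprimea$.

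Finally, for condition~\textit{\ref{condition:b}}, observe that with this choice each element $i_j^{(0)}$ generates the cycle $(i_j^{(0)}\ i_j^{(1)}\ \dots\ i_j^{(\da-1)})$ of length $\da$, and these cycles (as $j$ and $b_0$ vary) are pairwise disjoint and exhaust $\set{1,\dots,n}$. Their total number is $\sum_{b_0} c_{b_0} = \nprimea$. The only subtlety, and the one point I would treat carefully, is that a less symmetric choice of bijections $I(b_0 + k\nprimea) \to I(b_0 + (k+1)\nprimea)$ could a priori produce cycles whose length is a proper multiple of $\da$; the uniform enumeration above avoids this because the composition around the orbit is the identity of $I(b_0)$, so each cycle closes after exactly $\da$ steps.
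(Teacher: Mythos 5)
Your construction is correct and is essentially the paper's own proof: the paper likewise fixes a numbering $i_1(b),\dots,i_{\card{I(b)}}(b)$ of each fiber $I(b)$ and lets $\genSigmaQellbar$ send $i_l(b)$ to $i_l(b+\nprimea)$, which is exactly your uniform enumeration along each orbit of $b \mapsto b+\nprimea$. Your explicit remark that an incoherent choice of fiber bijections could yield cycles of length a proper multiple of $\da$ is a worthwhile point the paper leaves implicit, but the argument is the same.
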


\begin{proof}
Let us note that the condition~\ref{condition:a} is equivalent to the fact that $\sigma(I(b)) = I(b+\nprimea)$. For all $b \in \ZnZ$ such that $I(b) \neq \emptyset$, choose a numbering $i_1(b)$, \dots, $i_{\card{I(b)}}(b)$ of the elements of $I(b)$ and denote by $\genSigmaQellbar$ the element of $\Sn$ which sends $i_l(b)$ to $i_l(b+\nprimea)$ for all $b \in \ZnZ$ and $1 \leq l \leq \card{I(b)}$.

From the definition, we have $a_{\genSigmaQellbar(i)} = a_i + \nprimea$ and, inspecting the orbits of each of the $a_i$ under $b \mapsto b + \nprimea$, we see that $\genSigmaQellbar$ is a product of $\nprimea$ disjoint cycles of length $\da$.
\end{proof}

Denote by $\SaQellbarprime$\label{definition:SaQellbarprime} the fixator of $(a_1,\dots,a_n) \in (\ZnZ)^n$ in $\symmetricgroup_n$; it is a group which can be identified with $\prod_{b \in \ZnZ}{\symmetricgroup_{I(b)}}$ (it is hence generated by transpositions) and we set $\gammaa = [\Sn:\SaQellbarprime]$\label{definition:gammaa}. Consider $\genSigmaQellbar \in \symmetricgroup_n$ satisfying the conditions of the preceding lemma and let $\smash{\SigmaQellbar} = \langle \genSigmaQellbar \rangle$\label{definition:SigmaQellbar} be the cyclic subgroup of order $\da$ of $\symmetricgroup_n$ generated by~$\genSigmaQellbar$.

\begin{proposition}\label{result:SaQellbar.semi-direct.prod}
The fixator $\SaQellbar$ of $a = [a_1,\dots,a_n] \in \cargroupA$ can be written as the semi-direct product
\[\SaQellbar = \SaQellbarprime \rtimes \SigmaQellbar.\]
\end{proposition}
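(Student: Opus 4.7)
The plan is to construct a surjective group homomorphism $\varphi \colon \SaQellbar \to \nprimea\ZnZ$ whose kernel is $\SaQellbarprime$ and which admits $\SigmaQellbar$ as a section; the semi-direct product decomposition will then follow at once from standard group theory.

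For $\sigma \in \SaQellbar$, the equality $\sigmaa = a$ in $\cargroupA$ means that there is a unique $j \in \ZnZ$ with $a_{\sigma^{-1}(i)} = a_i + j$ for every $i \in \integerinterval{1}{n}$. Since $(a_1+j,\dots,a_n+j)$ is then a permutation of $(a_1,\dots,a_n)$, the definition of $\nprimea$ forces $j \in \nprimea\ZnZ$; setting $\varphi(\sigma) = j$ therefore gives a well-defined map $\varphi \colon \SaQellbar \to \nprimea\ZnZ$. Applying the relation $a_{\sigma^{-1}(i)} = a_i + \varphi(\sigma)$ twice yields at once $\varphi(\sigma\tau) = \varphi(\sigma) + \varphi(\tau)$, so $\varphi$ is a group homomorphism, and its kernel consists exactly of the $\sigma$ fixing each $a_i$, namely $\SaQellbarprime$.

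Surjectivity is provided by the generator $\genSigmaQellbar$: the defining relation $a_{\genSigmaQellbar(i)} = a_i + \nprimea$ translates into $\varphi(\genSigmaQellbar) = -\nprimea$, which generates $\nprimea\ZnZ$. We thus obtain a short exact sequence
\[1 \to \SaQellbarprime \to \SaQellbar \xrightarrow{\varphi} \nprimea\ZnZ \to 1.\]
Since $\genSigmaQellbar$ is a product of $\nprimea$ disjoint cycles of length $\da$, it has order exactly $\da$, so $\lvert\SigmaQellbar\rvert = \da = \lvert\nprimea\ZnZ\rvert$. Moreover $\varphi(\genSigmaQellbar^k) = -k\nprimea$ is nonzero in $\ZnZ$ for $1 \leq k < \da$, so $\varphi$ restricted to $\SigmaQellbar$ is injective, hence an isomorphism onto $\nprimea\ZnZ$. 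In particular $\SigmaQellbar \cap \SaQellbarprime = \{e\}$ and this isomorphism splits the exact sequence, which gives $\SaQellbar = \SaQellbarprime \rtimes \SigmaQellbar$.

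No substantive obstacle arises here: the one point that requires a moment's thought is that the translation $j$ attached to an element of $\SaQellbar$ must lie in $\nprimea\ZnZ$, but this is essentially tautological from the definition of $\nprimea$. Everything else is routine verification.
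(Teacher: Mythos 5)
Your proof is correct and follows essentially the same route as the paper: you construct the homomorphism $\SaQellbar \to \nprimea\ZnZ$ (the paper's $\ja$), identify its kernel as $\SaQellbarprime$, and observe that $\genSigmaQellbar$ maps to the generator $-\nprimea$, so that $\SigmaQellbar$ is carried isomorphically onto $\nprimea\ZnZ$ and splits the sequence. The only difference is that you spell out the injectivity of the restriction to $\SigmaQellbar$ in slightly more detail than the paper does.
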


\begin{proof}
If $s \in \SaQellbar$, there exists a unique $j \in \nprimea\ZnZ$ such that ${}^s (a_1,\dots,a_n) = (a_1+j,\dots,a_n+j)$. This element only depends on $a$, not on the choice of $a_1$, \dots, $a_n$; we denote it by $\ja(s)$. The map $\ja \colon \SaQellbar \to \nprimea\ZnZ$\label{definition:ja} thus defined is a group homomorphism. This homomorphism is surjective and its kernel is the fixator $\SaQellbarprime$ of $(a_1,\dots,a_n) \in (\ZnZ)^n$ in $\Sn$.

Moreover, as $a_{\genSigmaQellbar(i)} = a_i + \nprimea$ and thus $a_{\genSigmaQellbar^{-1}(i)} = a_i - \nprimea$, we have $\ja(\genSigmaQellbar) = -\nprimea$ by construction, hence $\ja$ induces an isomorphism of $\SigmaQellbar = \gr{\genSigmaQellbar}$ onto the image $\nprimea\ZnZ$ of $\ja$, which shows that
\[\SaQellbar = \SaQellbarprime \rtimes \SigmaQellbar.\qedhere\]
\end{proof}

\begin{remarks}\label{remark:da.nprimea:indt.a}
\begin{subremarks}
    \item In particular, the group $\SaQellbarprime$ is a normal subgroup of $\SaQellbar$ and the quotient group $\SaQellbar/\SaQellbarprime$ is isomorphic to $\nprimea \ZnZ$ and hence of order $\da$.
    \item\label{subremark:da.nprimea:indt.a} Let us insist on the fact that $\nprimea$, $\da$, $\SaQellbarprime$ and $\ja$ only depend on $a$ and not on the choice of the representative $(a_1,\dots,a_n) \in \ZnZ$. The group $\SigmaQellbar$ also only depends on $a$, but its construction is not canonical as it depends on an arbitrary choice of numbering.
    \item\label{subremark:ja:depends.on.a} Let us also note that if $k \in \ZnZinv$, then $\da[ka] = \da$, $\nprimea[ka] = \nprimea$, $\SaQellbarprime[ka] = \SaQellbarprime$ and $\SaQellbar[ka] = \SaQellbar$, but $\ja[ka] = k\ja$.
\end{subremarks}
\end{remarks}

\subsection{Character values on a transposition \texorpdfstring{$\tau$}{tau}}\label{subsection:Qellbar[G]:character.values.Sa}

\begin{theorem}\label{result:trace.transposition}
For any transposition $\tau \in \symmetricgroup_n$, we have
\begin{equation}\label{formula:trace.transposition}
\traceh{\tau^*}{\HetladQellbarprim{n-2}{\dworkhypersurfaceFqbar}} = (-1)^n\left(\frac{(1-n)^{n-1}+(n-1)}{n} - \delta_n\right)\mathclose{},
\end{equation}
where, as previously, $\delta_n =0$ if $n$ is odd and $\delta_n =1$ if $n$ is even.
\end{theorem}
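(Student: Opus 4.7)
The approach is to compute $\eulerpoincarecaract(\dworkhypersurfaceFqbar^\tau)$ by a direct geometric analysis and then plug into \thrm{\ref{result:generic.formula.trace.G}}. Since $p\neq 2$, the transposition $\tau$ has order prime to $p$; moreover all transpositions in $\symmetricgroup_n$ are conjugate and conjugate elements of $\groupG$ act by linearly conjugate operators on cohomology, so it suffices to treat $\tau = (1\ 2)$.

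The first step is to describe $\dworkhypersurfaceFqbar^\tau$ geometrically. The automorphism $\tau$ extends to an involution of $\varprojFqbar{n-1}$ induced by the linear map $(x_1,\dots,x_n)\mapsto(x_2,x_1,x_3,\dots,x_n)$ of $\Fqbar^n$, whose $(+1)$-eigenspace is the hyperplane $\{x_1=x_2\}$ and whose $(-1)$-eigenspace is the line $\{x_1+x_2=0,\,x_3=\dots=x_n=0\}$; its fixed subscheme in $\varprojFqbar{n-1}$ is therefore the disjoint union of a $\varproj{n-2}$ and the single point $p = [1\colonsep -1\colonsep 0\colonsep\dots\colonsep 0]$. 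Intersecting with $\dworkhypersurfaceFqbar$ yields $\dworkhypersurfaceFqbar^\tau = Y_1 \sqcup Y_2$, where $Y_1$ is the hypersurface of $\varproj{n-2}$ with coordinates $(y,x_3,\dots,x_n)$ (setting $y=x_1=x_2$) defined by
\[f := 2y^n + x_3^n + \dots + x_n^n - n\parameter y^2 x_3\cdots x_n = 0,\]
and $Y_2 = \dworkhypersurfaceFqbar \cap \{p\}$ is nonempty exactly when $1+(-1)^n = 0$, i.e.\ when $n$ is odd; thus $\eulerpoincarecaract(Y_2) = 1-\delta_n$.

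The main technical step, and the main obstacle, will be to verify that $Y_1$ is smooth so that the Hirzebruch formula (\thrm{\ref{theorem:hirzebruch}}) applies. The plan is to compute the partials of $f$ and to show by a short case analysis on the vanishing of $y$ and of the $x_j$ ($j\geq 3$) that any common zero must satisfy $y\neq 0$, $x_j\neq 0$ for all $j\geq 3$, and $x_j^n = y^n$; writing then $x_j = \zeta_j y$ with $\zeta_j^n = 1$ and substituting into $\partial f/\partial y = 0$ (which reads $y^{n-2} = \parameter x_3\cdots x_n$) yields $\parameter\prod_{j\geq 3}\zeta_j = 1$, and raising to the $n$-th power gives $\parameter^n = 1$, contradicting our standing hypothesis. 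Hence $Y_1$ is a smooth hypersurface of degree $n$ in $\varproj{n-2}$, and applying \thrm{\ref{theorem:hirzebruch}} with $n-1$ variables and $d = n$ yields
\[\eulerpoincarecaract(Y_1) = (n-2) + \frac{(1-n)^{n-1}+(n-1)}{n}.\]

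Adding the two contributions gives $\eulerpoincarecaract(\dworkhypersurfaceFqbar^\tau) = (n-1) + \frac{(1-n)^{n-1}+(n-1)}{n} - \delta_n$. Substituting this into \frml{\eqref{formula:lefschetz.trace:EP}} produces
\[\traceh{\tau^*}{\HetladQellbarprim{n-2}{\dworkhypersurfaceFqbar}} = (-1)^{n-1}\biggl(-\frac{(1-n)^{n-1}+(n-1)}{n}+\delta_n\biggr) = (-1)^n\biggl(\frac{(1-n)^{n-1}+(n-1)}{n} - \delta_n\biggr),\]
which is exactly \frml{\eqref{formula:trace.transposition}}.
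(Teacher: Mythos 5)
Your proposal is correct and follows essentially the same route as the paper: reduce to $\tau=(1\,2)$, split the fixed locus into the piece $x_1=x_2$ (a degree-$n$ hypersurface in $\varproj{n-2}$) and the point $[1\colonsep -1\colonsep 0\colonsep\dots\colonsep 0]$ present only for $n$ odd, compute Euler characteristics via \thrm{\ref{theorem:hirzebruch}}, and conclude with \thrm{\ref{result:generic.formula.trace.G}}. The only difference is that you spell out the smoothness check for the equation $2y^n+x_3^n+\dots+x_n^n-n\parameter y^2x_3\cdots x_n=0$ (which the paper merely asserts follows from $\parameter^n\neq1$), and your verification is sound.
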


\begin{proof}
We may assume that $\tau = (1,2)$. We look for the fixed points of $\tau$, i.e. the set of points $[x_1 \colonsep \dots \colonsep x_n]$ such that $[x_1 \colonsep x_2 \colonsep x_3 \colonsep \dots \colonsep x_n] = [x_2 \colonsep x_1 \colonsep x_3 \colonsep \dots \colonsep x_n]$ and $x_1^n + \dots + x_n^n - n \parameter x_1 \dots x_n = 0$. For such a point, we have $x_1^2 = x_2^2$, so that we are in one of the following two cases.
\begin{subproperty}
    \item We have $x_1 = x_2$ and $2x_2^n + x_3^n + \dots + x_n^n - n \parameter x_2^2 x_3 \dots x_n = 0$. The hypersurface of $\varproj{n-2}$ defined by this equation is smooth because $\parameter^n \neq 1$ and its Euler\namedash Poincar\'e characteristic is $(n-2) + \frac{1}{n}[(1-n)^{n-1}+(n-1)]$ (\thrm{\ref{theorem:hirzebruch}}).
    \item We have $x_1 = -x_2 \neq 0$, in which case $x_3 = \dots = x_n = 0$ and $x_1^n + x_2^n = 0$. This can only happen if $n$ is odd and $[x_1 \colonsep \dots \colonsep x_n] = [1 \colonsep -1 \colonsep 0 \colonsep \dots \colonsep 0]$.
\end{subproperty}

The Euler\namedash Poincar\'e characteristic of the fixed-point subvariety of $\tau$ of $\dworkhypersurfaceFqbar$ is thus
\begin{align*}
\eulerpoincarecaract(\dworkhypersurface^\tau)
& = (n-2) + \frac{(1-n)^{n-1}+(n-1)}{n} + 1 - \delta_n\\
& = (n-1) + \frac{(1-n)^{n-1}+(n-1)}{n} - \delta_n,
\end{align*}
and consequently, as $\tau$ is of order $2$ and $\Fq$ is of characteristic $\neq 2$, \thrm{\ref{result:generic.formula.trace.G}} applies:
\begin{align*}
\traceh{\tau^*}{\HetladQellbarprim{n-2}{\dworkhypersurfaceFqbar}}
& = (-1)^{n-1}\Big((n-1) - \eulerpoincarecaract(\dworkhypersurfaceFqbar^{\tau})\Big)\\
& = (-1)^n\biggl(\frac{(1-n)^{n-1}+(n-1)}{n} - \delta_n\biggl). \qedhere
\end{align*}
\end{proof}

\subsection{Sum of the dimensions of the spaces \texorpdfstring{$\isotypiccomponentQellbar{a}$}{Habar} for \texorpdfstring{$a \in \cargroupAtau$}{a in Âtau}}\label{subsection:Qellbar[G]:value.trace.identity}

\begin{proposition}\label{result:compute.dimensions.transposition}
Let $\tau \in \symmetricgroup_n$ be a transposition. Denote by $\cargroupAtau$\label{definition:cargroupA.fixe.s} the set of elements of $\cargroupA$ fixed by~$\tau$. We have
\[\sum_{a \in \cargroupAtau}{\ma} = (-1)^{n-1}\biggl(\frac{(1-n)^{n-1}+(n-1)}{n} - \delta_n\biggr),\]
where, as previously, $\delta_n = 0$ if $n$ is odd and $\delta_n = 1$ if $n$ is even.
\end{proposition}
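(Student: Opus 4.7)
The plan is to express $\sum_{a\in\cargroupAtau}\ma$ as an average of traces and evaluate it via \thrm{\ref{result:trace.z1...zn}}. Without loss of generality I take $\tau=(1,2)$. The identity ${}^\tau[a_1,\dots,a_n]=[a_2,a_1,a_3,\dots,a_n]=[a_1,\dots,a_n]$ means $(a_2-a_1,a_1-a_2,0,\dots,0)$ is a constant tuple $(c,\dots,c)$ in $(\ZnZ)^n$; reading off the third coordinate (this is where $n\geq 3$ is used) forces $c=0$, so $\cargroupAtau=\setst{[a_1,\dots,a_n]\in\cargroupA}{a_1=a_2}$.

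For each $\zeta\in\unityroots{n}(\Fq)$, the element $g_\zeta=[\zeta^{-1},\zeta,1,\dots,1]$ belongs to $\groupA$ (its coordinates have product $1$). As in the opening of the proof of \thrm{\ref{result:mult.a}}, the character of the $\Qellbar[\groupA]$-module $\HetladQellbarprim{n-2}{\dworkhypersurfaceFqbar}$ evaluated at $g_\zeta$ is
\[\traceh{g_\zeta^*}{\HetladQellbarprim{n-2}{\dworkhypersurfaceFqbar}}=\sum_{a\in\cargroupA}\ma\, t(\zeta)^{a_2-a_1}.\]
Averaging over $\zeta$ and using the orthogonality identity $\frac{1}{n}\sum_\zeta t(\zeta)^{a_2-a_1}=\mathbf{1}[a_1=a_2]$, I obtain
\[\sum_{a\in\cargroupAtau}\ma=\frac{1}{n}\sum_{\zeta\in\unityroots{n}(\Fq)}\traceh{g_\zeta^*}{\HetladQellbarprim{n-2}{\dworkhypersurfaceFqbar}}.\]

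To finish I apply \frml{\eqref{formula:trace.z1...zn}} to each $g_\zeta$. Writing $k_\zeta(\zeta')$ for the number of coordinates of $g_\zeta$ equal to $\zeta'$, the count is transparent in three cases: (i) $\zeta=1$, where $k_\zeta(1)=n$; (ii) $\zeta=-1$, which belongs to $\unityroots{n}(\Fq)$ only when $n$ is even, and then $k_\zeta(1)=n-2$ and $k_\zeta(-1)=2$; (iii) $\zeta\neq\pm 1$, where $k_\zeta(1)=n-2$ and $k_\zeta(\zeta)=k_\zeta(\zeta^{-1})=1$.

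The main obstacle is the resulting arithmetic. Substituting these $k_\zeta$ into \frml{\eqref{formula:trace.z1...zn}}, summing over $\zeta$, and collecting the $n-1$ (respectively $n-2$) identical contributions of case (iii) alongside the single contribution of case (i), one checks (using the simple identity $(1-n)^n+(n-1)(1-n)^{n-2}=n(n-1)(1-n)^{n-2}$) that the pieces telescope to $\frac{(1-n)^{n-1}+(n-1)}{n}$, while case (ii) accounts for exactly the $-\delta_n$ correction when $n$ is even. A straightforward sign chase converts the prefactor $(-1)^n/n^2$ coming from \frml{\eqref{formula:trace.z1...zn}} and the averaging into the $(-1)^{n-1}$ of the statement.
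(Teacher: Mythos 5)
Your proof is correct, and it takes a genuinely different route from the paper's. The paper argues combinatorially: it identifies $\sum_{a \in \cargroupAtau}{\ma}$ with the cardinality of the set $B$ of tuples $(b_1,\dots,b_n) \in (\ZnZnonzero)^n$ with $b_1 = b_2$ and $b_1+\dots+b_n = 0$ (using the explicit formula $\ma = \card{(\diffens{\ZnZ}{\set{a_1,\dots,a_n}})}$ from \thrm{\ref{result:mult.a}} to see that each fixed class has exactly $\ma$ such representatives), and then counts $\card{B}$ by an induction on the number of coordinates via the recursion $u_r + u_{r+1} = (n-1)^{r-1}$. You instead extract $\sum_{a_1=a_2}\ma$ by orthogonality, averaging the character of the $\Qellbar[\groupA]$-module over the family $g_\zeta = [\zeta^{-1},\zeta,1,\dots,1]$, and then feed each $g_\zeta$ into the geometric trace formula \eqref{formula:trace.z1...zn}. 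This bypasses both the explicit value of $\ma$ and the auxiliary counting lemma, at the price of a slightly heavier closed-form computation at the end; your case analysis of the profiles $k_\zeta$ is right (including the parity issue of whether $-1 \in \unityroots{n}(\Fq)$), and I checked that the sum does collapse as you claim: with $T_1$, $T_2$, $T_3$ the contributions of cases (i), (ii), (iii), one gets $(1-n)^n + (n-1+\delta_n)(1-n)^{n-2} + \delta_n(1-n)^2\cdot 0$ -- more precisely the powers of $(1-n)$ combine via your identity to $n(n-1)(1-n)^{n-2} = -n(1-n)^{n-1}$ and the constant terms combine to $-n(n-1)$ (for $n$ odd) or $n$ (for $n$ even), yielding exactly $(-1)^{n-1}\bigl(\frac{(1-n)^{n-1}+(n-1)}{n}-\delta_n\bigr)$ after dividing by $n$. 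For a final write-up you should display that last computation rather than leave it as "one checks", but there is no gap.
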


\begin{proof}
We may assume that $\tau = (1,2)$. Denote by $B$ the set of elements $(b_1,\dots,b_n) \in (\diffens{\ZnZ}{\set{0}})^n$ such that $b_1 = b_2$ and $b_1 + \dots + b_n = 0$. The map $(b_1,\dots,b_n) \mapsto [b_1,\dots,b_n]$ from $B$ to $\cargroupAtau$ is surjective and each element $a \in \cargroupAtau$ has exactly $\ma$ elements in its preimage. We thus have $\sum_{a \in \cargroupAtau}{\ma} = \card{B}$ and conclude thanks to the following lemma.
\end{proof}

\begin{lemma}
Let $r$ be an integer $\geq 2$. The number of $r$-uples $(b_1,\dots,b_r)$ belonging to $(\diffens{\ZnZ}{\set{0}})^r$ such that $b_1 = b_2$ and $b_1 + \dots + b_r = 0$ is
\[(-1)^{r-1}\biggl(\frac{(1-n)^{r-1}+(n-1)}{n} - \delta_n\biggr).\]
\end{lemma}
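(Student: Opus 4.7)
The plan is to reduce the count to a standard character-sum computation. Set $c := b_1 = b_2 \in \ZnZnonzero$; summing over $c$ rewrites the count as
\[f(r) = \sum_{c \in \ZnZnonzero} M(r-2,-2c),\]
where, for $s \geq 1$, $M(s,t) := \card{\setst{(c_1,\dots,c_s) \in (\ZnZnonzero)^s}{c_1+\dots+c_s=t}}$, with the convention $M(0,t) = 1$ if $t=0$ in $\ZnZ$ and $0$ otherwise. It then suffices to compute $M(s,t)$ in closed form; a direct orthogonality computation is cleaner than adapting the induction of \lmm{\ref{lemma:compute.sum.induction.A}}.

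For that, I would fix a primitive $n$\textsuperscript{th} root of unity $\omega \in \C$ and apply orthogonality:
\[M(s,t) = \frac{1}{n}\sum_{k \in \ZnZ}\Bigl(\sum_{b \in \ZnZnonzero}\omega^{kb}\Bigr)^s \omega^{-kt}.\]
The inner geometric sum equals $n-1$ when $k=0$ and $-1$ otherwise, while $\sum_{k \neq 0}\omega^{-kt} = n\,\mathbf{1}_{t=0}-1$, so
\[M(s,t) = \frac{(n-1)^s + (-1)^s\bigl(n\,\mathbf{1}_{t=0}-1\bigr)}{n}.\]

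To finish, I would use the observation $\card{\setst{c \in \ZnZnonzero}{2c = 0}} = \delta_n$ (empty if $n$ is odd, reduced to $\{n/2\}$ if $n$ is even) and sum the expression above over $c \in \ZnZnonzero$ with $t = -2c$. This gives
\[f(r) = (n-1)\,\frac{(n-1)^{r-2}-(-1)^{r-2}}{n} + (-1)^{r-2}\delta_n = \frac{(n-1)^{r-1} + (-1)^{r-1}(n-1)}{n} + (-1)^r \delta_n,\]
and factoring out $(-1)^{r-1}$, together with $(n-1)^{r-1} = (-1)^{r-1}(1-n)^{r-1}$, yields the announced formula. I do not expect any real obstacle; the only bookkeeping point is the edge case $r=2$, where $M(0,-2c)$ collapses to $\mathbf{1}_{2c=0}$ and the sum reduces directly to $\delta_n$, which matches the formula, and the case $n$ even (where $2c=0$ has a nontrivial solution) is precisely what produces the $-\delta_n$ correction.
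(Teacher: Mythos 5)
Your proof is correct, but it takes a genuinely different route from the paper's. The paper's argument is a two-line recursion: writing $u_r$ for the count, it observes $u_2 = \delta_n$ and $u_r + u_{r+1} = (n-1)^{r-1}$ (by letting $b_{r+1}$ range over all of $\ZnZ$), then concludes by induction on $r$. You instead derive a closed form for the general count $M(s,t)$ of $s$-tuples of nonzero residues with prescribed sum $t$, via orthogonality of characters over $\C$, and then sum over the common value $c = b_1 = b_2$; your identification of the $\delta_n$ correction with $\card{\setst{c \in \ZnZnonzero}{2c=0}}$ and your check of the degenerate case $r=2$ (where $M(0,-2c)$ collapses to an indicator) are both right, and the final algebra matches the stated formula. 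The paper's recursion is shorter and avoids introducing any auxiliary quantity, but your approach buys a reusable closed formula for $M(s,t)$ and a computation with no induction; it is in fact the same orthogonality technique that could replace the inductive proofs of \lmm{\ref{lemma:compute.sum.induction.A}} and \lmm{\ref{lemme:calcul.somme.rec.G}} elsewhere in the paper. One cosmetic caution: your $\omega$ (a primitive complex $n$\textsuperscript{th} root of unity) clashes with the paper's use of $\omega$ for roots of unity in $\Ka$, so a different letter would be preferable if this were inserted into the text.
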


\begin{proof}
Denote by $u_r$ the number we want to compute. We have $u_2 = \delta_n$ and $u_r + u_{r+1}$ is the number of $(r+1)$-uples $(b_1,\dots,b_r,b_{r+1}) \in (\diffens{\ZnZ}{\set{0}})^r \times \ZnZ$ such that $b_1 = b_2$ and $b_1 + \dots + b_{r+1} = 0$, that is, $u_r + u_{r+1} = (n-1)^{r-1}$. We deduce the announced result by induction on $r$.
\end{proof}

\subsection{Action of \texorpdfstring{$\SaQellbarprime$}{S'a} on \texorpdfstring{$\isotypiccomponentQellbar{a}$}{Habar}}\label{subsection:Qellbar[G]:action.Saprime}

We start with a general result on automorphisms of finite order with trace equal to the dimension of the space.

\begin{lemma}\label{rappel:trace.elt.ordre.n}
Let $\fieldk$ be a field of characteristic zero, $V$ a vector space of finite dimension over $\fieldk$ and $u$ an automorphism of $V$ of finite order. If $\trace u = \dim V$, then $u = \Id_V$.
\end{lemma}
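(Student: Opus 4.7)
The plan is to reduce to an eigenvalue calculation over $\C$. Since $u$ has finite order $N$, it satisfies $u^N = \Id_V$, so its minimal polynomial divides $X^N - 1$. As $\fieldk$ has characteristic zero, $X^N - 1$ is separable, so $u$ is diagonalizable over an algebraic closure $\overline{\fieldk}$, with all eigenvalues being $N$-th roots of unity.

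Next, I would fix an embedding of the cyclotomic subfield generated by the eigenvalues into $\C$; this is possible because the $N$-th roots of unity in $\overline{\fieldk}$ generate a finite Galois extension of $\Q$ that embeds into any algebraically closed field of characteristic zero, in particular into $\C$. Under such an embedding, the eigenvalues $\lambda_1, \dots, \lambda_d$ of $u$ (with $d = \dim V$) become complex numbers of modulus $1$, and the trace, being a polynomial expression in the entries of $u$ in a chosen basis, transports to the sum $\lambda_1 + \dots + \lambda_d$ in $\C$. We are given that this sum equals $d$.

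The key step is then the triangle inequality:
\[d = \biggl\lvert \sum_{i=1}^{d} \lambda_i \biggr\rvert \leq \sum_{i=1}^{d} \lvert \lambda_i \rvert = d,\]
with equality. Since $\lvert \lambda_i \rvert = 1$ for each $i$, equality in the triangle inequality forces all $\lambda_i$ to be equal to a common unit complex number, and summing to the positive real number $d$ forces that common value to be $1$. Thus every eigenvalue of $u$ is $1$ in $\overline{\fieldk}$, and since $u$ is diagonalizable we conclude $u = \Id_V$.

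The only mild subtlety is the first reduction, namely justifying that the trace computed in $\fieldk$ and the trace computed after extension to $\overline{\fieldk}$ (or after embedding into $\C$) agree; this follows because the trace is basis-independent and compatible with scalar extension. Once that is in place, the proof is essentially the standard triangle-inequality argument for sums of roots of unity.
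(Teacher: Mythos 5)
Your proof is correct and follows essentially the same route as the paper: embed into $\C$, note that the eigenvalues are roots of unity, and use the equality case of the triangle inequality to force them all to equal $1$. The only cosmetic difference is at the end, where you invoke diagonalizability (from separability of $X^N-1$ in characteristic zero) while the paper concludes via ``unipotent of finite order implies identity''; both steps are valid and interchangeable.
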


\begin{proof}
Let $M$ be the matrix of $u$ in a certain basis of $V$ over $\fieldk$. The subfield $\fieldk '$ of $\fieldk$ generated by the coefficients of $M$ embeds itself in $\C$; we can thus restrict ourselves to the case $\fieldk = \C$.

Let $\lambda_1$, \dots, $\lambda_m$ (where $m = \dim V$) be the (complex) eigenvalues of $M$, each repeated with multiplicity. They are all roots of unity. As we have, according to the assumptions of the lemma,
\[\abs{\lambda_1 + \dots + \lambda_m} = \abs{\trace u } = m = \abs{\lambda_1} + \dots + \abs{\lambda_m},\]
the $\lambda_i$'s are positively proportional, hence equal. As their sum is $m$, they are all equal to $1$. The endomorphism $u$ of $V$ is thus unipotent; as it is of finite order, it is equal to $\Id_V$.
\end{proof}

\begin{remark}\label{remarque:cas.egalite.majoration}
Let $\fieldk$ be a field having characteristic zero, and $(V_i)_{i\in I}$ a finite sequence of vector space of finite dimensions over $\fieldk$. For each $i \in I$, let $u_i$ be an automorphism of $V_i$ of finite order. If $\sum_{i\in I}{\trace u_i}$ is equal to $\sum_{i\in I}{\dim V_i}$ (respectively to $-\sum_{i\in I}{\dim V_i}$), then $u_i = \Id_{V_i}$ (respectively $u_i = -\Id_{V_i}$) for all $i \in I$. This results from \lmm{\ref{rappel:trace.elt.ordre.n}} applied to the automorphism $u$ of $V = \bigoplus_{i \in I}{V_i}$ which is equal to $u_i$ (respectively to $-u_i$) over $V_i$ for all $i \in I$.
\end{remark}\bigbreak

Let $\tau \in \symmetricgroup_n$ be a transposition. As $\HetladQellbarprim{n-2}{\dworkhypersurfaceFqbar} = \bigoplus_{a\in \cargroupA}{\isotypiccomponentQellbar{a}}$ and as $\tau^*$ sends $\isotypiccomponentQellbar{a}$ into $\isotypiccomponentQellbar{\taua}$, we have
\[\traceh{\tau^*}{\HetladQellbarprim{n-2}{\dworkhypersurfaceFqbar}} = \sum_{a\in \cargroupAtau}{\traceh{\tau^*}{\isotypiccomponentQellbar{a}}}.\]
By \thrm{\ref{result:trace.transposition}} and \prpstn{\ref{result:compute.dimensions.transposition}}, we also have
\[\traceh{\tau^*}{\HetladQellbarprim{n-2}{\dworkhypersurfaceFqbar}} = -\sum_{a\in \cargroupAtau}{\dim \isotypiccomponentQellbar{a}}.\]
We thus deduce from \rmrk{\ref{remarque:cas.egalite.majoration}} that, for each $a \in \cargroupAtau$, $\tau^*$ acts on $\isotypiccomponentQellbar{a}$ by $-\Id_{\isotypiccomponentQellbar{a}}$.

\begin{theorem}\label{result:action.SaQellbarprime}
Consider $a \in \cargroupA$ and $\sigma \in \SaQellbarprime$. If we denote by $\signature(\sigma)$ the signature of $\sigma$, we have
\[\sigma^*|\isotypiccomponentQellbar{a} = \signature(\sigma)\Id_{\isotypiccomponentQellbar{a}}.\]
\end{theorem}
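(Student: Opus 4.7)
The plan is to reduce to the case of a single transposition, which has just been handled in the paragraph preceding the theorem. I would first invoke the structure recalled in \subsctn{\ref{subsection:Qellbar[G]:structure.SaQellbar}}: the group $\SaQellbarprime$ is by definition the pointwise stabilizer of the tuple $(a_1,\dots,a_n)$ in $\Sn$ and identifies with $\prod_{b \in \ZnZ}{\symmetricgroup_{I(b)}}$. It is therefore generated by the transpositions $\tau = (i,j)$ such that $a_i = a_j$; any such $\tau$ lies in $\SaQellbarprime \subset \SaQellbar$, so that $a \in \cargroupAtau$ and $\tau^*$ preserves $\isotypiccomponentQellbar{a}$.

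Next, the discussion immediately preceding the theorem, which combines \thrm{\ref{result:trace.transposition}}, \prpstn{\ref{result:compute.dimensions.transposition}} and \rmrk{\ref{remarque:cas.egalite.majoration}}, shows that for every such $\tau$ the induced map $\tau^*$ acts on $\isotypiccomponentQellbar{a}$ as $-\Id_{\isotypiccomponentQellbar{a}}$. Decomposing an arbitrary $\sigma \in \SaQellbarprime$ as $\sigma = \tau_1 \cdots \tau_k$ with each $\tau_i$ of the form just described, the contravariance of $g \mapsto g^*$ then yields
\[\sigma^*|\isotypiccomponentQellbar{a} = \tau_k^*\cdots\tau_1^*|\isotypiccomponentQellbar{a} = (-1)^k\Id_{\isotypiccomponentQellbar{a}} = \signature(\sigma)\Id_{\isotypiccomponentQellbar{a}},\]
since $(-1)^k$ is independent of the chosen factorization and equals $\signature(\sigma)$.

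I do not foresee any real obstacle: the theorem is essentially a packaging step. The only point worth verifying is that each generating transposition of $\SaQellbarprime$ actually lies in $\SaQellbar$, so that the hypothesis $a \in \cargroupAtau$ of the preceding paragraph applies to every factor; this is immediate because such a $\tau$ fixes the tuple itself, not merely its class in $\cargroupA$. All the substantive work has already been done in \subsctns{\ref{subsection:Qellbar[G]:character.values.Sa}\rangedash\ref{subsection:Qellbar[G]:value.trace.identity}} via the trace computation and the rigidity lemma \lmm{\ref{rappel:trace.elt.ordre.n}}.
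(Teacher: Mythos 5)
Your proposal is correct and follows essentially the same route as the paper: $\SaQellbarprime$ is generated by transpositions fixing the tuple $(a_1,\dots,a_n)$, each such transposition acts as $-\Id_{\isotypiccomponentQellbar{a}}$ by the discussion preceding the theorem, and multiplicativity gives the signature. The extra care you take (checking that each generating transposition fixes $a$ itself, so the preceding paragraph applies) is exactly the point the paper relies on implicitly.
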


\begin{proof}
The subgroup $\SaQellbarprime$ of $\symmetricgroup_n$ is generated by the transpositions $\tau$ satisfying $\taua = a$ (see \subsctn{\ref{subsection:Qellbar[G]:structure.SaQellbar}}) and we have just seen that $\tau^* | \isotypiccomponentQellbar{a} = - \Id_{\isotypiccomponentQellbar{a}} = \signature(\tau) \Id_{\isotypiccomponentQellbar{a}}$.
\end{proof}

\subsection{Character values on \texorpdfstring{$\groupA\sigma$}{Asigma} where \texorpdfstring{$\sigma$}{sigma} is a product of \texorpdfstring{$n'$}{n'} disjoint cycles of length \texorpdfstring{$d$}{d}}\label{subsection:Qellbar[G]:character.values.Sigmaa}

Let $n'$ and $d$ be integers $\geq 1$ such that $n'd = n$ and let $\sigma \in \symmetricgroup_n$ be a product of $n'$ disjoint cycles of length $d$. Let $\zeta_1$, \dots, $\zeta_n$ be elements of $\unityroots{n}(\Fq)$ such that $\zeta_1\dots\zeta_n = 1$ and denote by $g$ the element $[\zeta_1,\dots,\zeta_n] \sigma$ of $\groupG = \groupA \rtimes \symmetricgroup_n$. Let $O_1$, \dots, $O_{n'}$\label{definition:orbites} be the $n'$ orbits of $\sigma$ in $\set{1,\dots,n}$ and, for each $\zeta \in \unityroots{n}(\Fq)$, denote by $k(\zeta)$\label{definition:kzeta.nprime} the number of $j \in \set{1,\dots,n'}$ such that $\prod_{i \in O_j}{\zeta_i} = \zeta$. The following theorem generalizes \thrm{\ref{result:trace.z1...zn}} (which is recovered by taking $d=1$ and $n' = n$ i.e. $\sigma = \Id$).

\begin{theorem}\label{result:trace:d-cycles}
Under the preceding assumptions,
\[\traceh{g^*}{\HetladQellbarprim{n-2}{\dworkhypersurfaceFqbar}} = \frac{(-1)^n}{n'} \sum_{\zeta \in \unityroots{n'}(\Fq)}{(1-n)^{k(\zeta)}}.\]
\end{theorem}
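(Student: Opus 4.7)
The plan is to apply the Lefschetz-type formula \eqref{formula:lefschetz.trace:EP} from \thrm{\ref{result:generic.formula.trace.G}}, so the task reduces to computing $\eulerpoincarecaract(\dworkhypersurfaceFqbar^g)$ and showing it equals $(n-1)+\frac{1}{n'}\sum_{\zeta\in\unityroots{n'}(\Fq)}(1-n)^{k(\zeta)}$.

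First, I describe the fixed locus. A point $[x_1\colonsep\dots\colonsep x_n]$ is fixed by $g$ iff there exists $\lambda\in\Fqbar^*$ with $\zeta_{\sigma(i)}x_{\sigma(i)}=\lambda x_i$ for all $i$. Iterating these relations around each cycle $O_j$ of length $d$ shows that on each orbit either all coordinates vanish, or else $\lambda^d=\mu_j:=\prod_{i\in O_j}\zeta_i$ and the orbit is completely determined by $y_j:=x_{a_1^{(j)}}$. This yields a decomposition $\dworkhypersurfaceFqbar^g=\bigsqcup_\lambda F_\lambda$, where $F_\lambda$ sits in $\varproj{k(\mu)-1}$ (with $\mu=\lambda^d$) via the coordinates $y_j$ of the active orbits (those with $\mu_j=\mu$).

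Substituting these expressions into the Dwork equation and using $\zeta_i^n=1$, one computes $\sum_i x_i^n=\bigl(\sum_{k=0}^{d-1}\lambda^{nk}\bigr)\sum_j y_j^n$, which equals $d\sum_j y_j^n$ when $\mu\in\unityroots{n'}(\Fqbar)$ and $0$ otherwise; and $\prod_i x_i=0$ as soon as $k(\mu)<n'$ (some orbit being inactive). Three cases then arise: (i) $\mu\notin\unityroots{n'}(\Fqbar)$, so the equation becomes trivial and $F_\lambda\iso\varproj{k(\mu)-1}$ with $\eulerpoincarecaract=k(\mu)$; (ii) $\mu\in\unityroots{n'}(\Fqbar)$ with $k(\mu)<n'$, so $F_\lambda$ is the Fermat hypersurface $\sum_j y_j^n=0$ of degree $n$ in $\varproj{k(\mu)-1}$; (iii) $\mu\in\unityroots{n'}(\Fqbar)$ with $k(\mu)=n'$, so $F_\lambda$ is a reduced Dwork-type hypersurface $\sum_j y_j^n-n'\psi'\prod_j y_j^d=0$ of degree $n$ in $\varproj{n'-1}$, for an explicit scalar $\psi'$ depending on $\lambda$ and the $\zeta_i$'s. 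In cases (ii) and (iii), the hypersurface is smooth, so \thrm{\ref{theorem:hirzebruch}} applies uniformly and gives $\eulerpoincarecaract(F_\lambda)=(k(\mu)-1)+\frac{(1-n)^{k(\mu)}+(n-1)}{n}$.

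Each $\mu$ is attained by exactly $d$ values of $\lambda$, so summing and using $\sum_\mu k(\mu)=n'$ together with $\card{\unityroots{n'}(\Fqbar)}=n'$, a short manipulation gives
\[\eulerpoincarecaract(\dworkhypersurfaceFqbar^g)=(n-1)+\frac{1}{n'}\sum_{\zeta\in\unityroots{n'}(\Fq)}(1-n)^{k(\zeta)},\]
and the announced formula follows from \eqref{formula:lefschetz.trace:EP}. The main obstacle is the smoothness check in case (iii): the reduced hypersurface $\sum_j y_j^n-n'\psi'\prod_j y_j^d$ of degree $n=n'd$ in $\varproj{n'-1}$ is smooth iff $\psi'$ avoids a specific finite set of ``bad'' $n'$-th roots of unity, and one must verify that this follows from the assumption $\psi^n\neq 1$ on the original parameter (keeping track of how $\psi'$ depends on $\psi$, $\lambda$ and the $\zeta_i$'s).
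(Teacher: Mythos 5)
Your overall route is the paper's: reduce to computing $\eulerpoincarecaract(\dworkhypersurfaceFqbar^g)$ via \thrm{\ref{result:generic.formula.trace.G}} (you should also record that the order of $g$ divides $nd$, hence is prime to $p$, so that the theorem applies), stratify the fixed locus by the proportionality scalar $\lambda$, recognize each stratum in the orbit coordinates $y_j$ as a projective space, a Fermat hypersurface, or a Dwork-type hypersurface, apply \thrm{\ref{theorem:hirzebruch}}, and sum over $\lambda$ with multiplicity $d$; your final bookkeeping is correct. The one genuine gap is exactly the step you defer: the smoothness of the case~(iii) stratum is called ``the main obstacle'' and left unverified, so the proof is incomplete as written. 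Closing it takes two observations. First, in case~(iii) one has $\lambda^n=1$ (because $\lambda^d\in\unityroots{n'}(\Fqbar)$ and $n=n'd$), so expressing the coordinates of each orbit in terms of $y_j$ multiplies the monomial $x_1\dots x_n$ by a product of powers of $\lambda$ and of the $\zeta_i$'s, i.e.\ by an element of $\unityroots{n}(\Fqbar)$; hence $\parameter'$ is $\parameter$ times an \nth{$n$} root of unity and $(\parameter')^n=\parameter^n\neq1$. Second, the ``bad set'' for $y_1^n+\dots+y_{n'}^n-n'\parameter'(y_1\dots y_{n'})^d$ is precisely $\setst{\parameter'}{(\parameter')^{n'}=1}$: a singular point must have all $y_j\neq0$ (otherwise every coordinate vanishes), the vanishing of the partial derivatives then forces all the $y_j^n$ to take a common value, and writing $y_j=\epsilon_j s$ with the $\epsilon_j$ roots of unity one sees such a point exists exactly when $(\parameter')^{n'}=1$. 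Since $n'$ divides $n$, $(\parameter')^{n'}=1$ would give $(\parameter')^n=1$, contradicting the first observation; this is the one-line argument the paper uses.

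A smaller patch: in your case~(i) you say ``the equation becomes trivial'', which also requires the product term to vanish, i.e.\ some orbit to be inactive ($k(\mu)<n'$). This is automatic but should be said: since $\zeta_1\dots\zeta_n=1$ we have $\prod_j\mu_j=1$, so if all $n'$ orbits were active for the same $\mu$ then $\mu^{n'}=1$, contradicting $\mu\notin\unityroots{n'}(\Fqbar)$. (The paper instead treats the configuration $k(\mu)=n'$ with $\mu\notin\unityroots{n'}(\Fq)$ head-on, obtaining the non-reduced hypersurface $(y_1\dots y_{n'})^d=0$, a union of hyperplanes with Euler characteristic $n'=k(\mu)$, so either way the totals agree.)
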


\begin{proof}
We may assume that $\sigma$ is the product of $((j-1)d+1,\dots,jd)$ for $1 \leq j \leq n'$ and that $O_j = \set{(j-1)d+1,\dots,jd}$. The fixed points of $g$ in $\dworkhypersurface(\Fqbar)$ are the points $[x_1 \colonsep \dots \colonsep x_n]$ of $\dworkhypersurface(\Fqbar)$ such that
\[[\zeta_{\sigma^{-1}(1)}x_{\sigma^{-1}(1)} \colonsep \dots \colonsep \zeta_{\sigma^{-1}(n)}x_{\sigma^{-1}(n)}] = [x_1 \colonsep \dots \colonsep x_n]\]
i.e.
\[[\zeta_{1}x_{1} \colonsep \dots \colonsep \zeta_{n}x_{n}] = [x_{\sigma(1)} \colonsep \dots \colonsep x_{\sigma(n)}].\]
The subscheme $\dworkhypersurfaceFqbar^g$ of these fixed points is thus the disjoint union, over $\lambda \in \Fqbar^*$, of the closed subschemes $Y_{\lambda}$ of $\dworkhypersurfaceFqbar$ defined by
\[(Y_{\lambda}) \quad \begin{system}
x_1^n + \dots + x_n^n - n \parameter x_1 \dots x_n = 0, \\
x_{\sigma(i)} = \lambda \zeta_i x_i \quad \text{for $1 \leq i \leq n$.}
\end{system}\]
Let $j \in \set{1,\dots,n'}$. If $\prod_{i \in O_j}{\zeta_i} \neq \lambda^{-d}$, the second relation shows that $x_i = 0$ for all $i \in O_j$. If $\prod_{i \in O_j}{\zeta_i} = \lambda^{-d}$, we have $\lambda \in \unityroots{nd}(\Fqbar)$ and the second relation shows that
\[\sum_{i \in O_j}{x_i^n} = x_{jd}^n \biggl(\sum_{i=1}^{d}{(\lambda^n)^i}\biggr) = \begin{cases} d x_{jd}^n & \text{if $\lambda \in \unityroots{n}(\Fq)$,} \\ 0 & \text{if $\lambda \notin \unityroots{n}(\Fq)$.}\end{cases}\]

Consider $\lambda \in \Fqbar^*$ and let $\zeta = \lambda^{-d}$ (as $n = n'd$, we have $\zeta^{n'}=1 \iff \lambda^n = 1$). Denote by $J$ the set of $j \in \set{1,\dots,n'}$ such that $\prod_{i \in O_j}{\zeta_i} = \zeta$ and let $y_j = x_{jd}$ for each $j \in J$. If $\zeta \notin \unityroots{n}(\Fq)$, $J$ is empty and hence $Y_{\lambda}$ is empty. Assume now that $\zeta \in \unityroots{n}(\Fq)$. The number of elements of $J$ is $k(\zeta)$. We consider two cases.
\begin{subproperty}
   \item \textsc{First case:} $\zeta \in \unityroots{n'}(\Fq)$. According to what we have just done, the scheme $Y_{\lambda}$ is isomorphic to the hypersurface of $\varprojFqbar{k(\zeta)-1}$ defined by
\begin{align*}
& d\biggl(\sum_{j\in J}{y_j^n}\biggr) = 0 \quad \text{if $J \neq \set{1,\dots,n'}$,} \\
& d(y_1^n+\dots+y_{n'}^n) - n \parameter ' y_1^d \dots y_{n'}^d = 0 \quad \text{if $J = \set{1,\dots,n'}$,}
\end{align*}
where $\parameter '$ is the product of $\parameter$ by an element of $\unityroots{n}(\Fqbar)$. This hypersurface is smooth (because, in the second case, we have $(\parameter ')^n = \parameter^n \neq 1$ and thus $(\parameter ')^{n'} \neq 1$), hence, by \thrm{\ref{theorem:hirzebruch}} \pg{\pageref{theorem:hirzebruch}}, we have
\[\eulerpoincarecaract(Y_{\lambda}) = k(\zeta) - 1 + \frac{(1-n)^{k(\zeta)}+(n-1)}{n} = k(\zeta) + \frac{(1-n)^{k(\zeta)}-1}{n}.\]
   \item \textsc{Second case:} $\zeta \in \diffens{\unityroots{n}(\Fq)}{\unityroots{n'}(\Fq)}$. This time, the scheme $Y_{\lambda}$ is isomorphic to $\varprojFqbar{k(\zeta)-1}$ if $J \neq \set{1,\dots,n'}$ and to the hypersurface of $\varprojFqbar{n'-1}$ defined by $(y_1\dots y_{n'})^d = 0$ if $J = \set{1,\dots,n'}$. In the first case, we have $\eulerpoincarecaract(Y_{\lambda}) = k(\zeta)$. In the second case, we necessarily have $n' \geq 2$ and the Euler\namedash Poincar\'e characteristic of $Y_{\lambda}$ is equal to that of $Y_{\lambda}^\mathrm{red}$, which is the union in $\varprojFqbar{n'-1}$ of the hyperplanes defined by $y_j = 0$, hence
\begin{align*}
\eulerpoincarecaract(Y_{\lambda})
& = \sum_{\substack{L \subset \set{1,\dots,n'} \\ L \neq \emptyset}}{(-1)^{\card{L}-1}(n' - \card{L})} = \sum_{l=1}^{n'}{(-1)^{l-1} \binomial{n'}{l} (n'-l)} \\ & = n' \sum_{l=1}^{n'-1}{(-1)^{l-1}\binomial{n'-1}{l}} = n'(1-(1+(-1))^{n'-1}) = n' = k(\zeta).
\end{align*}
\end{subproperty}

For each $\zeta \in \unityroots{n}(\Fq)$, there exists exactly $d$ values of $\lambda$ such that $\lambda^{-d} = \zeta$. Thus
\begin{align*}
\eulerpoincarecaract(\dworkhypersurfaceFqbar^g)
& = \sum_{\lambda \in \Fqbar^*}{\eulerpoincarecaract(Y_{\lambda})} = d \sum_{\zeta \in \unityroots{n}(\Fq)}{k(\zeta)} + d \sum_{\zeta \in \unityroots{n'}(\Fq)}{\frac{(1-n)^{k(\zeta)}-1}{n}} \\
& = dn' + \sum_{\zeta \in \unityroots{n'}(\Fq)}{\frac{(1-n)^{k(\zeta)}-1}{n'}} = n-1 + \sum_{\zeta \in \unityroots{n'}(\Fq)}{\frac{(1-n)^{k(\zeta)}}{n'}}.
\end{align*}
The order of $g$ divides $nd$ and hence is prime to $q$; thus, by \thrm{\ref{result:generic.formula.trace.G}},
\begin{align*}
\traceh{g^*}{\HetladQellbarprim{n-2}{\dworkhypersurfaceFqbar}}
& = (-1)^{n-1}\Big((n-1) - \eulerpoincarecaract(\dworkhypersurfaceFqbar^g)\Big) \\
& = \frac{(-1)^n}{n'} \sum_{\zeta \in \unityroots{n'}(\Fq)}{(1-n)^{k(\zeta)}}.\qedhere
\end{align*}
\end{proof}

\subsection{Trace of a product \texorpdfstring{$\sigma$}{sigma} of \texorpdfstring{$n'$}{n'} disjoint cycles of length \texorpdfstring{$d$}{d} acting on \texorpdfstring{$\isotypiccomponentQellbar{a}$}{Habar} when \texorpdfstring{$a \in \cargroupAsigma$}{a in Âsigma}}\label{subsection:Qellbar[G]:trace.Sigmaa}

We keep the notations of \subsctn{\ref{subsection:Qellbar[G]:character.values.Sigmaa}}.

\begin{lemma}\label{resutlat:somme.dim.d-cycles}
If $\sigma \in \symmetricgroup_n$ is a product of $n'$ disjoint cycles of length $d$,
\[\sum_{a \in \cargroupA \textup{ such that } \sigma \in \SaprimeQell}{a(\zeta_1,\dots,\zeta_n)\ma} = \frac{(-1)^{n'}}{n'} \sum_{\zeta \in \unityroots{n'}(\Fq)}{(1-n)^{k(\zeta)}}.\]
\end{lemma}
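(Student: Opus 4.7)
The strategy is to generalize the combinatorial argument of \thrm{\ref{result:mult.a}}: unfold the sum over classes $a \in \cargroupA$ into a sum over tuples $(a_1, \dots, a_n)$, exploit the orbit structure of $\sigma$ to reduce to an $n'$-variable problem, and finish with a Fourier/orthogonality argument to handle the residual constraint.

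First I would unfold the left-hand side. Using the formula $\ma = \card{(\diffens{\ZnZ}{\set{a_1,\dots,a_n}})}$ from \thrm{\ref{result:mult.a}}, and the fact that each class in $\cargroupA$ admits $n$ representatives $(a_1, \dots, a_n) \in (\ZnZ)^n$ with $\sum a_i = 0$, this gives a factor $1/n$ and a sum over such tuples with the extra restriction that $a_i$ is constant on each orbit $O_j$ of $\sigma$ (which is precisely the condition $\sigma \in \SaprimeQell$). Writing $b_j$ for the common value on $O_j$ and $\zeta_j' = \prod_{i \in O_j}\zeta_i$, the constraint $\sum a_i = 0$ becomes $\sum b_j \equiv 0 \pmod{n'}$ (since $n = n'd$), and the summand becomes $(n - \card{\set{b_1, \dots, b_{n'}}}) \prod_j (\zeta_j')^{b_j}$. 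Expressing the cardinality as $\sum_{k \in \ZnZ} \mathbf{1}_{\forall j,\, b_j \neq k}$ and substituting $c_j = b_j - k$, the character picks up the factor $(\prod_i \zeta_i)^{-k} = 1$, the constraint remains $\sum c_j \equiv 0 \pmod{n'}$, and the sum over $k$ yields a factor $n$ that cancels the initial $1/n$, leaving
\[
\sum_{\substack{(c_1, \dots, c_{n'}) \in (\ZnZnonzero)^{n'} \\ \sum c_j \equiv 0 \pmod{n'}}} \prod_{j=1}^{n'} (\zeta_j')^{c_j}.
\]

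The key remaining step is to handle the relaxed congruence $\sum c_j \equiv 0 \pmod{n'}$ (in place of the stricter $\sum c_j = 0$ in $\ZnZ$ treated in \lmm{\ref{lemma:compute.sum.induction.A}}). I would Fourier-expand the indicator using the characters of $\ZnZ$ that are trivial on $n'\ZnZ$; under the identification of \subsctn{\ref{subsection:Qellbar[A]:cargroup.A.Qellbar}}, these correspond bijectively to $\omega \in \unityroots{n'}(\Fq)$ via $c \mapsto \omega^c$, and the above sum becomes
\[
\frac{1}{n'}\sum_{\omega \in \unityroots{n'}(\Fq)} \prod_{j=1}^{n'}\sum_{c \in \ZnZnonzero}(\zeta_j'\omega^{-1})^c.
\]
By the elementary identity $\sum_{c \in \ZnZnonzero}\xi^c = n-1$ if $\xi = 1$ and $-1$ otherwise (already used inside \lmm{\ref{lemma:compute.sum.induction.A}}), each inner product evaluates to $(n-1)^{k(\omega)}(-1)^{n'-k(\omega)} = (-1)^{n'}(1-n)^{k(\omega)}$, where $k(\omega) = \card{\set{j : \zeta_j' = \omega}}$ as in \subsctn{\ref{subsection:Qellbar[G]:character.values.Sigmaa}}, yielding exactly the announced right-hand side.

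The main obstacle is the bookkeeping of the first reduction: I must check that the shift $b_j \mapsto b_j - k$ genuinely decouples $k$ from both the character (thanks to $\prod_j \zeta_j' = \prod_i \zeta_i = 1$) and the modular constraint (since $n'k \equiv 0 \pmod{n'}$), so that the sum over $k$ collapses to a simple factor $n$. Once this combinatorial reshuffling is clean, the Fourier step is routine and parallels the induction used in the proof of \lmm{\ref{lemma:compute.sum.induction.A}}.
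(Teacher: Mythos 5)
Your proof is correct, and its overall skeleton matches the paper's: both arguments reduce the left-hand side to the exponential sum $\sum{\mu_1^{c_1}\cdots\mu_{n'}^{c_{n'}}}$ over $c_1,\dots,c_{n'}\in\ZnZnonzero$ with $c_1+\dots+c_{n'}\in n'\ZnZ$, where $\mu_j=\prod_{i\in O_j}{\zeta_i}$. The two halves differ in execution. For the first reduction, the paper counts directly: it introduces the set $B$ of $\sigma$-invariant tuples in $(\ZnZnonzero)^n$ summing to $0$ and observes that the fiber of $B\to\cargroupA$ over $a$ has exactly $\ma$ elements (the shifts $(a_1+t,\dots,a_n+t)$ whose entries avoid $0$); you reach the same point by averaging over all $n$ representatives, expanding $\ma$ as an indicator sum over $k$, and shifting by $k$ \rangedash{} precisely the device used in the paper's proof of \thrm{\ref{result:mult.a}}, so this is sound, just a little longer. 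For the final evaluation, the paper proves \lmm{\ref{lemme:calcul.somme.rec.G}} by induction on the number of variables, whereas you expand the indicator of $n'\ZnZ$ by orthogonality of the characters $c\mapsto\omega^c$ with $\omega\in\unityroots{n'}(\Fq)$, which factors the sum and yields $(-1)^{n'}(1-n)^{k(\omega)}$ in one stroke. This is a genuine and arguably cleaner alternative: it gives a direct, non-recursive evaluation, while the paper's induction has the mild advantage of being uniform with \lmm{\ref{lemma:compute.sum.induction.A}}. The delicate points \rangedash{} that $\prod_j{\mu_j}=1$ kills the $k$-dependence of the character, that the congruence is unaffected by the shift, and that the characters of $\ZnZ$ trivial on $n'\ZnZ$ are exactly those indexed by $\unityroots{n'}(\Fq)$ \rangedash{} are all handled correctly.
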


\begin{proof}
Denote by $B$ the set of $(b_1,\dots,b_n) \in (\ZnZnonzero)^n$ such that $b_1+\dots+b_n=0$ and ${}^{\sigma}(b_1,\dots,b_n) = (b_1,\dots,b_n)$. The image of the map $B \to \cargroupA$, $(b_1,\dots,b_n) \mapsto [b_1,\dots,b_n]$ is the set of $a \in \cargroupA$ such that $\sigma \in \SaQellbarprime$; such an element $a$ has exactly $\ma$ elements in its preimage. The sum we must compute can hence be rewritten as
\[\sum_{(b_1,\dots,b_n) \in B}{\zeta_1^{b_1} \dots \zeta_n^{b_n}}.\]

If $(b_1,\dots,b_n) \in B$, all the $b_i$, for $i$ belonging to an orbit $O_j$ of $\sigma$, are equal to a common $c_j \in \ZnZnonzero$ and we have $d(c_1+\dots c_{n'}) = 0$ in $\ZnZ$ i.e. $c_1+\dots+c_{n'} \in n'\ZnZ$. Our sum can thus be rewritten as
\[\sum_{\substack{c_1,\dots,c_{n'} \in \ZnZnonzero \\ c_1+\dots+c_{n'} \in n'\ZnZ}}{\mu_1^{c_1}\dots\mu_{n'}^{c_{n'}}},\]
where $\mu_j = \prod_{i \in O_j}{\zeta_i}$. We conclude by using the following generalization of \lmm{\ref{lemma:compute.sum.induction.A}} (which is recovered by taking $d=1$ and $n'=n$ i.e. $\sigma = \Id$).
\end{proof}

\begin{lemma}\label{lemme:calcul.somme.rec.G}
Let $r$ be an integer $\geq 1$ and $\mu_1$, \dots, $\mu_r$ elements of $\unityroots{n}(\Fq)$. For each $\zeta \in \unityroots{n}(\Fq)$, we denote by $k(\zeta)$ the number of $j \in \set{1,\dots,r}$ such that $\mu_j = \zeta$. We have
\[\sum_{\substack{c_1,\dots,c_{r} \in \ZnZnonzero \\ c_1+\dots+c_{r} \in n'\ZnZ}}{\mu_1^{c_1}\dots\mu_{r}^{c_{r}}} = \frac{(-1)^r}{n'} \sum_{\zeta \in \unityroots{n'}(\Fq)}{(1-n)^{k(\zeta)}}.\]
\end{lemma}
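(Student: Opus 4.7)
The cleanest route is character orthogonality on $\ZnZ$. Fix a primitive $n$-th root of unity $\omega$ (via the identification with roots of unity in $\Qellbar$). Since $\ZnZ/n'\ZnZ$ has order $n'$, its characters, pulled back to $\ZnZ$, are exactly the $x \mapsto \omega^{kdx}$ for $k=0,1,\dots,n'-1$, and by orthogonality
\[\mathbf{1}_{c_1+\dots+c_r \in n'\ZnZ} = \frac{1}{n'} \sum_{k=0}^{n'-1} \omega^{kd(c_1+\dots+c_r)}.\]
Inserting this into the sum on the left-hand side and exchanging the order of summation decouples the $c_j$:
\[\sum_{\substack{c_1,\dots,c_r \in \ZnZnonzero \\ c_1+\dots+c_r \in n'\ZnZ}}{\mu_1^{c_1}\dots\mu_{r}^{c_{r}}} = \frac{1}{n'}\sum_{k=0}^{n'-1} \prod_{j=1}^{r} \sum_{c \in \ZnZnonzero} (\mu_j \omega^{kd})^c.\]

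Next I apply the elementary identity $\sum_{c \in \ZnZnonzero}\zeta^{c} = n-1$ if $\zeta=1$ and $-1$ otherwise (already used in \lmm{\ref{lemma:compute.sum.induction.A}}). The roots of unity $\omega^{-kd}$ for $k=0,\dots,n'-1$ are precisely the elements of $\unityroots{n'}(\Fq)$, and $\mu_j\omega^{kd} = 1$ iff $\mu_j = \omega^{-kd}$. Setting $\zeta = \omega^{-kd}$, the number of indices $j$ with $\mu_j = \zeta$ is exactly $k(\zeta)$, so each inner product becomes
\[(n-1)^{k(\zeta)} (-1)^{r-k(\zeta)} = (-1)^r (1-n)^{k(\zeta)}.\]
Summing over $k$ (equivalently, over $\zeta \in \unityroots{n'}(\Fq)$) yields the announced formula.

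\textbf{Main obstacle.} There is no real difficulty; the only care point is checking the count of characters of $\ZnZ$ trivial on $n'\ZnZ$ (there are $n'$ of them, indexed by multiples of $d$), which must match the factor $1/n'$ in the target formula. Alternatively one can imitate the induction on $r$ used in \lmm{\ref{lemma:compute.sum.induction.A}} (splitting off the last variable $c_r$ and using the relation $\sum_{a\in\ZnZnonzero}\zeta^a \in \{n-1,-1\}$ to separate the terms where $c_1+\dots+c_{r-1} \in n'\ZnZ$ from those where it is not), which recovers the same recursion; the Fourier approach has the advantage of requiring no base case verification beyond the orthogonality identity itself, and specializes to \lmm{\ref{lemma:compute.sum.induction.A}} when $d=1$ and $n'=n$.
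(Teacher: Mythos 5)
Your argument is correct, and it takes a genuinely different route from the paper's. The paper proves this lemma by induction on $r$, exactly parallel to its proof of \lmm{\ref{lemma:compute.sum.induction.A}}: the base case $r=1$ is checked by hand according to whether $\mu_1$ lies in $\unityroots{n'}(\Fq)$ or not, and the inductive step adds the sums for $r$ and $r-1$ so as to free the last variable $c_r$, evaluates the resulting decoupled product, and then subtracts the term supplied by the inductive hypothesis. You instead detect the condition $c_1+\dots+c_r\in n'\ZnZ$ by averaging over the $n'$ characters of $\ZnZ$ trivial on $n'\ZnZ$ (namely $x\mapsto\omega^{kdx}$ for $k=0,\dots,n'-1$), which decouples all the variables at once; each character then contributes $(n-1)^{k(\zeta)}(-1)^{r-k(\zeta)}=(-1)^r(1-n)^{k(\zeta)}$ with $\zeta=\omega^{-kd}$ running exactly once over $\unityroots{n'}(\Fq)$, by the same elementary evaluation of $\sum_{c\in\ZnZnonzero}\zeta^{c}$ that the paper also uses. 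Your count of the relevant characters ($n'$ of them, matching the prefactor $1/n'$) is the one point that needed checking and it is right. What the orthogonality route buys is brevity and transparency: no base case, no recursion, and a structural explanation of why the answer is a sum over $\unityroots{n'}(\Fq)$; it also recovers \lmm{\ref{lemma:compute.sum.induction.A}} as the special case $d=1$, $n'=n$. What the paper's induction buys is uniformity of exposition, since it reuses verbatim the mechanism already set up for that earlier lemma. The only hypothesis to keep explicit is that all sums are computed in $\Qellbar$ via the identification $t$ of \subsctn{\ref{subsection:Qellbar[A]:cargroup.A.Qellbar}} (so that $n-1$ is a genuine integer rather than an element of $\Fq$), which your choice of $\omega$ already acknowledges.
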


\begin{proof}
We prove the result by induction on $r$. For $r=1$, we have
\[\sum_{c_1 \in  \diffens{n'\ZnZ}{\set{0}}}{\mu_1^{c_1}} = \begin{cases} d-1 = \frac{-1}{n'} ((1-n)^1+(n'-1)(1-n)^0) & \text{if $\mu_1 \in \unityroots{n'}(\Fq)$,} \\ -1 = \frac{-1}{n'} (n'(1-n)^0) & \text{if $\mu_1 \notin \unityroots{n'}(\Fq)$,}\end{cases}\]
hence the result in that case. Assume now that $r \geq 2$ and that the result is proved for $r-1$. We write
\begin{align*}
& \sum_{\substack{c_1,\dots,c_{r} \in \ZnZnonzero \\ c_1+\dots+c_{r} \in n'\ZnZ}}{\mu_1^{c_1}\dots\mu_{r}^{c_{r}}} + \sum_{\substack{c_1,\dots,c_{r-1} \in \ZnZnonzero \\ c_1+\dots+c_{r-1} \in n'\ZnZ}}{\mu_1^{c_1}\dots\mu_{r-1}^{c_{r-1}}} \\ & \qquad = \sum_{\substack{c_1,\dots,c_{r-1} \in \ZnZnonzero \\ c_r \in \ZnZ \\ c_1+\dots+c_{r} \in n'\ZnZ}}{\mu_1^{c_1}\dots\mu_{r}^{c_{r}}} \\
& \qquad = \sum_{\substack{c_1,\dots,c_{r-1} \in \ZnZnonzero \\ l \in n'\ZnZ}}{\mu_1^{c_1}\dots\mu_{r-1}^{c_{r-1}} \mu_r^{l-c_1-\dots-c_{r-1}}} \\
& \qquad = \sum_{\substack{c_1,\dots,c_{r-1} \in \ZnZnonzero}}{\left(\frac{\mu_1}{\mu_r}\right)^{c_1} \dots \left(\frac{\mu_{r-1}}{\mu_r}\right)^{c_{r-1}} \sum_{l \in n'\ZnZ}{\mu_r^l}}.
\end{align*}
The sum $\sum_{l \in n'\ZnZ}{\mu_r^l}$ is equal to $d$ if $\mu_r \in \unityroots{n'}(\Fq)$ and to $0$ otherwise whereas $\sum_{\substack{c_i \in \ZnZnonzero}}{(\frac{\mu_i}{\mu_r})^{c_i}}$ is equal to $n-1$ if $\mu_i = \mu_r$ and to $-1$ otherwise. The product of all these sums is thus equal to $(-1)^{r-1}d(1-n)^{k(\mu_r)-1}$ if $\mu_r \in \unityroots{n'}(\Fq)$ and to $0$ otherwise.

Taking into account the induction assumption, we obtain
\begin{align*}
&\sum_{\substack{c_1,\dots,c_{r} \in \ZnZnonzero \\ c_1+\dots+c_{r} \in n'\ZnZ}}{\mu_1^{c_1}\dots\mu_{r}^{c_{r}}} \\
& \qquad =  \sum_{\substack{\zeta \in \unityroots{n'}(\Fq)\\ \zeta \neq \mu_r}}{(-1)^r\frac{(1-n)^{k(\zeta)}}{n'}} \\
& \hspace{6em} + \sum_{\substack{\zeta \in \unityroots{n'}(\Fq)\\ \zeta = \mu_r}}{\left((-1)^r\frac{(1-n)^{k(\zeta)-1}}{n'} - d(-1)^r(1-n)^{k(\zeta)-1}\right)} \\
& \qquad = \frac{(-1)^r}{n'} \sum_{\zeta \in \unityroots{n'}(\Fq)}{(1-n)^{k(\zeta)}}.\qedhere
\end{align*}
\end{proof}

\begin{theorem}\label{result:action.d-cycle}
If $\sigma$ is a product of $n'$ disjoint cycles of length $d$ and if $a \in \cargroupAsigma$, then
\[\traceh{\sigma^*}{\isotypiccomponentQellbar{a}} = \begin{cases} (-1)^{n-n'}\ma & \text{if $\sigma \in \SaQellbarprime$,} \\ 0 & \text{if $\sigma \in \diffens{\SaQellbar}{\SaQellbarprime}$.}\end{cases}\]
\end{theorem}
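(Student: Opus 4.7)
The plan splits on whether $\sigma \in \SaQellbarprime$ or $\sigma \in \SaQellbar \setminus \SaQellbarprime$. The first case is immediate from \thrm{\ref{result:action.SaQellbarprime}}; the second is obtained by comparing the trace formula of \thrm{\ref{result:trace:d-cycles}} with \lmm{\ref{resutlat:somme.dim.d-cycles}} and then separating variables by linear independence of characters of $\groupA$.

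For the first case, \thrm{\ref{result:action.SaQellbarprime}} yields $\sigma^*|\isotypiccomponentQellbar{a} = \signature(\sigma)\,\Id_{\isotypiccomponentQellbar{a}}$. A product of $n'$ disjoint cycles of length $d$ has signature $(-1)^{(d-1)n'} = (-1)^{n-n'}$, and $\dim \isotypiccomponentQellbar{a} = \ma$, so the trace is $(-1)^{n-n'}\ma$.

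For the second case, I would fix an arbitrary $(\zeta_1,\dots,\zeta_n) \in \unityroots{n}(\Fq)^n$ with $\zeta_1\cdots\zeta_n = 1$ and consider $g = [\zeta_1,\dots,\zeta_n]\sigma \in \groupG$. Because $[\zeta_1,\dots,\zeta_n]^*$ acts on $\isotypiccomponentQellbar{b}$ by the scalar $b(\zeta_1,\dots,\zeta_n)$ while $\sigma^*$ sends $\isotypiccomponentQellbar{b}$ into $\isotypiccomponentQellbar{{}^{\sigma}b}$, only $b \in \cargroupAsigma$ contributes to $\trace(g^*)$, and
\[\traceh{g^*}{\HetladQellbarprim{n-2}{\dworkhypersurfaceFqbar}} = \sum_{b \in \cargroupAsigma}{b(\zeta_1,\dots,\zeta_n)\,\traceh{\sigma^*}{\isotypiccomponentQellbar{b}}}.\]
On the sub-sum where $\sigma \in \SaQellbarprime[b]$, the first case replaces $\traceh{\sigma^*}{\isotypiccomponentQellbar{b}}$ by $(-1)^{n-n'}\ma[b]$ and \lmm{\ref{resutlat:somme.dim.d-cycles}} evaluates this sub-sum to $\frac{(-1)^n}{n'}\sum_{\zeta \in \unityroots{n'}(\Fq)}{(1-n)^{k(\zeta)}}$ (via the sign reconciliation $(-1)^{n-n'}(-1)^{n'} = (-1)^n$), which by \thrm{\ref{result:trace:d-cycles}} is already the full trace of $g^*$. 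The complementary sub-sum, over $b \in \cargroupAsigma$ with $\sigma \notin \SaQellbarprime[b]$, must therefore vanish identically in $(\zeta_1,\dots,\zeta_n)$, and linear independence of the distinct characters of the finite abelian group $\groupA$ attached to the $b \in \cargroupA$ forces $\traceh{\sigma^*}{\isotypiccomponentQellbar{b}} = 0$ for each such $b$; specializing $b = a$ yields the claim.

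The real content is the decoupling step: turning one scalar identity into a family indexed by $\groupA$ so that linear independence of characters can separate the individual isotypic traces. Everything else (the signature of $\sigma$, the combinatorial sign reconciliation, the identification of which $b$ contribute to the trace) is mechanical bookkeeping.
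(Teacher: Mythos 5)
Your proof is correct and follows essentially the same route as the paper: the paper likewise expands $\traceh{g^*}{\HetladQellbarprim{n-2}{\dworkhypersurfaceFqbar}}$ over $\cargroupAsigma$, equates it with the sum from \lmm{\ref{resutlat:somme.dim.d-cycles}} via \thrm{\ref{result:trace:d-cycles}}, and identifies coefficients by linear independence of the characters of $\groupA$. The only cosmetic difference is that the paper reads off both cases simultaneously from that identification, whereas you obtain the case $\sigma \in \SaQellbarprime$ separately from \thrm{\ref{result:action.SaQellbarprime}} together with the signature computation $\signature(\sigma) = (-1)^{n-n'}$.
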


\begin{proof}
As $\HetladQellbarprim{n-2}{\dworkhypersurfaceFqbar} = \bigoplus_{a \in \cargroupA}{\isotypiccomponentQellbar{a}}$ and as $\sigma^*$ sends $\isotypiccomponentQellbar{a}$ into $\isotypiccomponentQellbar{\sigmaa}$, we have, for each $(\zeta_1,\dots,\allowbreak\zeta_n) \in \unityroots{n}(\Fq)^n$ satisfying $\zeta_1\dots\zeta_n=1$,
\[\traceh{([\zeta_1,\dots,\zeta_n]\sigma)^*}{\HetladQellbarprim{n-2}{\dworkhypersurfaceFqbar}} = \sum_{a \in \cargroupAsigma}{a(\zeta_1,\dots,\zeta_n)\traceh{\sigma^*}{\isotypiccomponentQellbar{a}}}.\]
Moreover, by \thrm{\ref{result:trace:d-cycles}} and \lmm{\ref{resutlat:somme.dim.d-cycles}},
\[\sum_{a \in \cargroupA \textup{ such that } \sigma \in \SaQellbarprime}{(-1)^{n-n'}\ma\,a(\zeta_1,\dots,\zeta_n)} = \sum_{a \in \cargroupAsigma}{\traceh{\sigma^*}{\isotypiccomponentQellbar{a}}\,a(\zeta_1,\dots,\zeta_n)}\]
As this is valid for all $(\zeta_1,\dots,\zeta_n) \in \unityroots{n}(\Fq)^n$ satisfying $\zeta_1\dots\zeta_n=1$, we may identify the coefficients, which gives the announced result.
\end{proof}

\subsection{Action of \texorpdfstring{$\SaQellbar$}{Sa} on \texorpdfstring{$\isotypiccomponentQellbar{a}$}{Habar}}\label{subsection:Qellbar[G]:action.Sigmaa}

Let's recapitulate the results of \subsctns{\ref{subsection:Qellbar[G]:character.values.Sa}}\rangedash\ref{subsection:Qellbar[G]:trace.Sigmaa}. We keep the notations of \subsctn{\ref{subsection:Qellbar[G]:structure.SaQellbar}}: $a = [a_1,\dots,a_n]$ is an element of $\cargroupA$, $\nprimea\ZnZ$ is the set of $j \in \ZnZ$ such that $(a_1+j,\allowbreak\dots,\allowbreak a_n+j)$ is a permutation of $(a_1,\dots,a_n)$ and $\da = n/\nprimea$; the fixator $\SaQellbar$ of $a$ in $\symmetricgroup_n$ can be written as
\[\arraycolsep=0pt\begin{array}{lcl}\SaQellbar = \SaQellbarprime \rtimes \SigmaQellbar & \text{\quad where\quad} & \text{$\SaQellbarprime$ is the fixator of $(a_1,\dots,a_n)$ in $\symmetricgroup_n$,} \\[3pt]
& \text{\quad and\quad}  &\text{$\SigmaQellbar = \langle \sigma \rangle$ is a cyclic group of order $\da$,}
\end{array}\]
with $\sigma$ a product of $\nprimea$ disjoint cycles of length $\da$.

The dimension $\ma$ of $\isotypiccomponentQellbar{a}$ is, by \thrm{\ref{result:mult.a}}, equal to $\card{(\diffens{\ZnZ}{\set{a_1,\dots,a_n}})}$. It is a multiple of $\da$ as $\set{a_1,\dots,a_n}$ is stable by translation by elements of $\nprimea\ZnZ$; we can thus write $\ma = \da\mprimea$.\label{definition:mprimea}

\begin{theorem}
The group $\SaQellbar$ acts on $\isotypiccomponentQellbar{a}$ as follows:
\begin{itemize}
    \item an element $s \in \SaQellbarprime$ acts by $\signature(s)\Id_{\isotypiccomponentQellbar{a}}$;
    \item an element $s \in \SigmaQellbar$ acts by $\mprimea$ copies of the regular representation of $\SigmaQellbar$.
\end{itemize}
\end{theorem}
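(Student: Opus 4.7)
The first bullet is already contained in \thrm{\ref{result:action.SaQellbarprime}}, so the only thing to establish is that the cyclic group $\SigmaQellbar = \gr{\genSigmaQellbar}$ acts on $\isotypiccomponentQellbar{a}$ as $\mprimea$ copies of its regular representation. Since $\SigmaQellbar$ is cyclic of order $\da$ and we work over a field of characteristic zero, it is enough to compute the character of this action and compare it with the character of the regular representation, which takes the value $\da$ at the identity and $0$ at every non-identity element.

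\textbf{Character computation.} For $0 \leq k \leq \da-1$ I would apply \thrm{\ref{result:action.d-cycle}} to $\genSigmaQellbar^k$. The element $a$ lies automatically in $\cargroupA^{\genSigmaQellbar^k}$ since $\genSigmaQellbar \in \SaQellbar$. The key cycle computation is that $\genSigmaQellbar$, being a product of $\nprimea$ disjoint $\da$-cycles, has $k$-th power equal to a product of $\nprimea\gcd(k,\da)$ disjoint cycles of length $\da/\gcd(k,\da)$, so \thrm{\ref{result:action.d-cycle}} applies to each $\genSigmaQellbar^k$ (with $n'$ replaced by $\nprimea\gcd(k,\da)$).

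\textbf{Deciding membership in $\SaQellbarprime$.} The remaining point is to determine for which $k$ one has $\genSigmaQellbar^k \in \SaQellbarprime$. From condition \ref{condition:a} of the lemma defining $\genSigmaQellbar$, one has $a_{\genSigmaQellbar^k(i)} = a_i + k\nprimea$ in $\ZnZ$, so $\genSigmaQellbar^k$ fixes $(a_1,\dots,a_n)$ pointwise if and only if $k\nprimea \equiv 0 \pmod n$, i.e.\ $\da \mid k$. For $1 \leq k \leq \da -1$ this fails, so $\genSigmaQellbar^k \notin \SaQellbarprime$ and \thrm{\ref{result:action.d-cycle}} gives $\traceh{(\genSigmaQellbar^k)^*}{\isotypiccomponentQellbar{a}} = 0$. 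For $k=0$ the trace equals $\dim \isotypiccomponentQellbar{a} = \ma = \da \mprimea$.

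\textbf{Conclusion.} The character of $\SigmaQellbar$ on $\isotypiccomponentQellbar{a}$ is therefore exactly $\mprimea$ times the character of the regular representation, proving the claim. The only non-routine step is the cycle-structure bookkeeping for $\genSigmaQellbar^k$ together with the observation that $\genSigmaQellbar^k \in \SaQellbarprime \iff k \equiv 0 \pmod{\da}$; once this is in hand, the vanishing of the trace off the identity is immediate from \thrm{\ref{result:action.d-cycle}}, and no further Euler\namedash Poincar\'e computation is needed.
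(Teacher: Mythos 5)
Your proposal is correct and follows essentially the same route as the paper: the first bullet is quoted from \thrm{\ref{result:action.SaQellbarprime}}, and the second is obtained by applying \thrm{\ref{result:action.d-cycle}} to the powers $\genSigmaQellbar^k$ and matching the resulting character ($\ma$ at the identity, $0$ elsewhere) with $\mprimea$ copies of the regular character. You merely make explicit two points the paper leaves implicit, namely that $\genSigmaQellbar^k$ is a product of $\nprimea\gcd(k,\da)$ disjoint cycles of equal length $\da/\gcd(k,\da)$ and that $\genSigmaQellbar^k \in \SaQellbarprime$ exactly when $\da \mid k$, which is a welcome clarification but not a different argument.
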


\begin{proof}
The first assertion results from \thrm{\ref{result:action.SaQellbarprime}} and the second from \thrm{\ref{result:action.d-cycle}}: the trace of $\genSigmaQellbar^i$ acting on $\isotypiccomponentQellbar{a}$ is zero if $1 \leq i \leq n-1$ and equal to $\ma = \dim \isotypiccomponentQellbar{a}$ if $i=0$ (note that $(-1)^{n-\nprimea} = 1$ since both $n$ and $\nprimea$ are odd), hence $\SigmaQellbar$ acts as $\mprimea = \ma/\da$ copies of the regular representation.
\end{proof}

This completely determines the structure of the $\Qellbar[\SaQellbar]$-module $\isotypiccomponentQellbar{a}$. From the considerations of \subsctn{\ref{subsection:Qellbar[G]:isotypic.decomposition.Qellbar[G]}}, we deduce the structure of the $\Qellbar[\groupG]$-module $\HetladQellbarprim{n-2}{\dworkhypersurfaceFqbar}$:
\begin{equation}\label{formule:decomposition.Het.QellbarG}
\HetladQellbarprim{n-2}{\dworkhypersurfaceFqbar} \iso \bigoplus_{a \in R}{\Ind_{\groupA \rtimes \SaQellbar}^{\groupG}{(a \tensor \signature \tensor \reg_{\SaQellbar/\SaQellbarprime}^{\ma'}})},
\end{equation}
where\label{definition:reg} $\reg_{\SaQellbar/\SaQellbarprime}$ is the regular representation of $\SaQellbar/\SaQellbarprime$ (let us recall that $R \subset \cargroupA$ is a set of representative elements of $\Sn\backslash\cargroupA$; see \subsctn{\ref{subsection:Qellbar[G]:isotypic.decomposition.Qellbar[G]}}).

\section{Action of \texorpdfstring{$\groupG$}{G} on \texorpdfstring{$\HetladQellprim{n-2}{\dworkhypersurfaceFqbar}$}{Hn-2(X,Ql)prim}}\label{section:Qell[G]}

We begin by giving a canonical construction of cyclotomic fields and characters attached to cyclic groups.

\subsection{The cyclotomic field attached to a cyclic group}\label{subsection:cycl.grp-field}

Let $C$ be a cyclic group of order $m \geq 1$. Denote by $\Q[C]$ the group algebra of $C$ over $\Q$ and by $\mathfrak{m}_C$ the ideal of $\Q[C]$ generated by the sums $\sum_{x \in C'}{[x]}$ for $C'$ a subgroup $\neq \set{1}$ of $C$.

\begin{theorem}
The ideal $\mathfrak{m}_C$ of $\Q[C]$ is maximal and the field $\fieldcyclgroup{C} = \Q[C]/\mathfrak{m}_C$ is isomorphic to the cyclotomic field $\Q(\unityroots{m})$ of \nth{$m$} roots of unity.
\end{theorem}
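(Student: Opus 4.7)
My plan is to fix a generator $g$ of $C$ and use it to identify $\Q[C]$ with $\Q[X]/(X^m-1)$ via $[g] \leftrightarrow X$. The factorization $X^m - 1 = \prod_{k \divides m}\Phi_k(X)$ into $\Q$-irreducible cyclotomic polynomials, combined with the Chinese Remainder Theorem, then yields an isomorphism
\[\Q[C] \iso \prod_{k \divides m}{\Q(\unityroots{k})},\]
sending $[g]$ to the tuple formed by a primitive $k$-th root of unity $\zeta_k$ in each factor. Under this isomorphism the maximal ideals of $\Q[C]$ are in bijection with the divisors $k$ of $m$, and the quotient by the maximal ideal attached to $k$ is $\Q(\unityroots{k})$. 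It thus suffices to show that $\mathfrak{m}_C$ corresponds to the factor $k=m$.

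The next step is a simplification of the generating set of $\mathfrak{m}_C$: if $H \subset K$ are subgroups of $C$ and $y_1,\dots,y_r$ are representatives of the cosets of $H$ in $K$, then
\[\sum_{x \in K}[x] = \Bigl(\sum_{i=1}^{r}[y_i]\Bigr)\sum_{x \in H}[x],\]
so the generator of $\mathfrak{m}_C$ attached to $K$ is already a multiple of that attached to $H$. As every non-trivial subgroup of $C$ contains a subgroup of prime order, it follows that $\mathfrak{m}_C$ is in fact generated by the elements $s_p = \sum_{i=0}^{p-1}[g^{im/p}]$ as $p$ ranges over the prime divisors of $m$.

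Finally, I would compute the image of each $s_p$ in each factor $\Q(\unityroots{k})$, i.e.\ substitute $X = \zeta_k$ in $1 + X^{m/p} + \dots + X^{(p-1)m/p}$. If $k \divides m/p$ then $\zeta_k^{m/p} = 1$ and the value is $p \neq 0$; otherwise $\zeta_k^{m/p} \neq 1$ and, since $\zeta_k^m = 1$, the value equals $(\zeta_k^m - 1)/(\zeta_k^{m/p} - 1) = 0$. As each factor is a field, the $k$-th component of $\mathfrak{m}_C$ in the product decomposition is either the whole field or zero, and is zero exactly when no prime $p \divides m$ satisfies $k \divides m/p$, that is, exactly when $k = m$. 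Therefore $\mathfrak{m}_C$ equals the maximal ideal attached to $k=m$, and $\fieldcyclgroup{C} \iso \Q(\unityroots{m})$. The argument is essentially bookkeeping; the one point requiring attention is the reduction to prime-order subgroups, after which identifying the surviving factor is a short divisibility check.
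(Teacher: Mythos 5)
Your proposal is correct and takes essentially the same route as the paper: identify $\Q[C]$ with $\Q[X]/(X^m-1)$, decompose it by the Chinese Remainder Theorem along the cyclotomic factors $\Phi_k$ for $k$ dividing $m$, and show that the generators of $\mathfrak{m}_C$ have zero image exactly in the factor $k=m$ and nonzero image in every other factor. Your preliminary reduction to the prime-order subgroups is a harmless extra step; the paper instead evaluates the generator attached to the index-$d$ subgroup directly in the $\Phi_d$-factor (obtaining $m/d \neq 0$ there and $0$ in the $\Phi_m$-factor), which is the same computation in substance.
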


\begin{proof}
We may assume that $C = \ZmZ$ so that the algebra $\Q[C]$ can be identified with $\Q[X]/(X^m-1)\Q[X]$. We have $X^m-1 = \prod_{d\divides m}{\Phi_d}$, where $\Phi_d$ is the \nth{$d$} cyclotomic polynomial. The polynomials $\Phi_d$ are paiwise prime in $\Q[X]$. From the chinese remainder theorem, we deduce that $\Q[X]/(X^m-1)\Q[X]$ is isomorphic to $\prod_{d\divides m}{\Q[X]/\Phi_d\Q[X]}$. We now proceed to show that $\mathfrak{m}_C$ is the kernel of the projection $\phi \colon \Q[X]/(X^m-1)\Q[X] \to \Q[X]/\Phi_m\Q[X]$. Let $d \neq m$ be an integer dividing $m$ and $C_d = d\ZmZ$ the unique subgroup of $C$ with index $d$; the element $\sum_{x \in C_d}{[x]}$ of $\Q[C]$ has projection $0$ on $\Q[X]/\Phi_m\Q[X]$ and projection $\neq 0$ (equal to $m/d$) on $\Q[X]/\Phi_d\Q[X]$, which shows the result.
\end{proof}

The field $\fieldcyclgroup{C}$\label{definition:corpsgpecycl} is called \emph{the cyclotomic field attached to the cyclic group $C$}. The compound map
\[C \to \Q[C] \to \fieldcyclgroup{C} = \Q[C]/\mathfrak{m}_C\]
is a canonical character $\carcyclgroup{C}$\label{definition:cargpecycl} of $C$ taking values in $\fieldcyclgroup{C}$. It induces an isomorphism between $C$ and the group of \nth{$m$} roots of unity of $\fieldcyclgroup{C}$.

\begin{proposition}
The field $\fieldcyclgroup{C}$ is a simple $\Q[C]$-module with endomorphism ring $\fieldcyclgroup{C}$.
\end{proposition}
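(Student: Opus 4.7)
The plan is to deduce both assertions directly from the previous theorem, which established that $\mathfrak{m}_C$ is a maximal ideal of $\Q[C]$ and hence that $\fieldcyclgroup{C} = \Q[C]/\mathfrak{m}_C$ is a field. For the simplicity, I would invoke the lattice correspondence: the $\Q[C]$-submodules of $\Q[C]/\mathfrak{m}_C$ correspond bijectively to the ideals of $\Q[C]$ containing $\mathfrak{m}_C$. By maximality, only $\mathfrak{m}_C$ and $\Q[C]$ itself satisfy this, so the only $\Q[C]$-submodules of $\fieldcyclgroup{C}$ are $0$ and $\fieldcyclgroup{C}$, which is the definition of simplicity.

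For the endomorphism ring, the key observation is that the action of $\Q[C]$ on $\fieldcyclgroup{C}$ factors through the quotient map $\Q[C] \onto \fieldcyclgroup{C}$. Consequently, a $\Q[C]$-linear endomorphism of $\fieldcyclgroup{C}$ is exactly the same thing as a $\fieldcyclgroup{C}$-linear endomorphism. But $\fieldcyclgroup{C}$ viewed as a module over itself is one-dimensional, so any $\fieldcyclgroup{C}$-linear endomorphism $\phi$ of $\fieldcyclgroup{C}$ is multiplication by the element $\phi(1) \in \fieldcyclgroup{C}$; conversely, for every $x \in \fieldcyclgroup{C}$, multiplication by $x$ defines such an endomorphism. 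The ring map $\fieldcyclgroup{C} \to \End_{\Q[C]}(\fieldcyclgroup{C})$, $x \mapsto (y \mapsto xy)$, is therefore bijective, and since $\fieldcyclgroup{C}$ is commutative it is a ring isomorphism.

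There is essentially no obstacle here: once the previous theorem is in hand, the proof is a one-line application of the ideal correspondence theorem together with the triviality that a rank-one free module over a field has endomorphism ring equal to that field. The only point worth flagging explicitly is that the isomorphism $\fieldcyclgroup{C} \isoto \End_{\Q[C]}(\fieldcyclgroup{C})$ so obtained is canonical, which is presumably what matters for the subsequent identification of the division rings $\Da$ appearing in the isotypic decomposition announced in the introduction.
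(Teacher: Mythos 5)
Your proof is correct, and it is exactly the standard argument the paper has in mind (the paper in fact states this proposition without proof, treating it as an immediate consequence of the maximality of $\mathfrak{m}_C$). Both halves — simplicity via the ideal correspondence for the quotient $\Q[C]/\mathfrak{m}_C$, and the identification $\End_{\Q[C]}(\fieldcyclgroup{C}) \iso \fieldcyclgroup{C}$ via $x \mapsto (y \mapsto xy)$ — are complete and accurate.
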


Let $C_1$ and $C_2$ be two cyclic groups of same order $m$ and $\phi \colon C_1 \to C_2$ and isomorphism of $C_1$ onto $C_2$. The homomorphism $\Q[C_1] \to \Q[C_2]$ extending $\phi$ factors as an isomorphism $\fieldcyclgroup{\phi} \colon \fieldcyclgroup{C_1} \to \fieldcyclgroup{C_2}$ and we have $\fieldcyclgroup{\phi} \circ \carcyclgroup{C_1} = \carcyclgroup{C_2} \circ \phi$, i.e. the following diagram is commutative
\[\begin{CD}
C_1 @>{\phi}>> C_2\\
@V{\chi_{C_1}}VV @VV{\chi_{C_2}}V\\
\K_{C_1} @>{\K_{\phi}}>> \K_{C_2}
\end{CD}\]

\subsection{The simple \texorpdfstring{$\Q[\groupA]$}{Q[A]}-module attached to an element of \texorpdfstring{$\ZnZinv\backslash\cargroupA$}{(Z/nZ)*\textbackslash Â}}

The group $\ZnZinv$ acts on $\cargroupA$ by $k \times [a_1,\dots,a_n] = [ka_1,\dots,ka_n]$. If $a \in \cargroupA$, we denote by $\classZnZinv{a}$\label{definition:classeZnZinv} the class mod $\ZnZinv$ of $a$. Let us note that the integers $\da$ and $\nprimea$ defined in \subsctn{\ref{subsection:Qellbar[G]:structure.SaQellbar}} only depend on $\classZnZinv{a}$ and not on $a$ (see \rmrk{\ref{remark:da.nprimea:indt.a}}).

Denote by $\na$\label{definition:na} the order of $a$ in the group $\cargroupA$; it only depends on $\classZnZinv{a}$ and not on $a$. If $m$ is an integer, we have $ma = 0$ if and only if all the $ma_i$ are equal, i.e. if and only if $m(a_i-a_{i'}) = 0$ for all $i$ and $i'$ between $1$ and $n$. The subgroup of $\ZnZ$ generated by the elements $a_i-a_{i'}$ only depends on $\classZnZinv{a}$ and not on $a$ or on the choice of $a_1$, \dots, $a_n$; it can be written as $\fa \ZnZ$\label{definition:fa} where $\fa$ divides $n$ and its order is $\na$, hence $n = \na \fa$. The integer $\fa$ only depends on $\classZnZinv{a}$, not on $a$.

Following \subsctn{\ref{subsection:Qellbar[A]:cargroup.A.Qellbar}}, we identify the group $\cargroupA$ to the group of characters of $\groupA$ taking values in $\Fq$, the element $a \in \cargroupA$ corresponding to the character $[\zeta_1,\dots,\zeta_n] \mapsto \zeta_1^{a_1}\dots\zeta_n^{a_n}$. If $\kerclassZnZinv{a}$\label{definition:Na} and $\imgclassZnZinv{a}$\label{definition:Ea} denote the kernel and the image of this character, $\imgclassZnZinv{a} \iso \groupA/\kerclassZnZinv{a}$ is a cyclic subgroup of order $\na$. Let us note that $\imgclassZnZinv{a}$ and $\kerclassZnZinv{a}$ only depend on $\classZnZinv{a}$, not on $a$.

Denote by $\fieldcyclgroupZnZinv{a}$\label{definition:corpsKa} the cyclotomic field attached to the cyclic group $\imgclassZnZinv{a}$ (see \subsctn{\ref{subsection:cycl.grp-field}}) and $\carcyclgroupZnZinv{a}$\label{definition:cara} the compound character
\[\groupA \onto \groupA/\kerclassZnZinv{a} \isoto \imgclassZnZinv{a} \into \fieldcyclgroupZnZinv{a},\]
where the third arrow is the canonical character of $\imgclassZnZinv{a}$ from \subsctn{\ref{subsection:cycl.grp-field}}.

\begin{remarks}\label{remarque:ka=a}\label{remarque:Ka.cara.ind.a}
\begin{subremarks}
    \item Consider $k \in \ZnZinv$. We have $ka = a$ if and only if $k \equiv 1 \mod{\na\Z}$.
    \item The cyclotomic field $\fieldcyclgroupZnZinv{a}$ only depends on $\classZnZinv{a}$ and not on $a$, but $\carcyclgroupZnZinv{ka} = \carcyclgroupZnZinv{a}^k$.
\end{subremarks}
\end{remarks}

\begin{proposition}
The character $\carcyclgroupZnZinv{a}$ defines a structure of simple $\Q[\groupA]$-module on $\fieldcyclgroupZnZinv{a}$ whose endomorphism ring is canonically isomorphic to the field $\fieldcyclgroupZnZinv{a}$.
\end{proposition}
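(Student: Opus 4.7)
The plan is to reduce the statement to the cyclic-group case already treated in \subsctn{\ref{subsection:cycl.grp-field}}. By construction, the character $\carcyclgroupZnZinv{a}$ factors as
\[\groupA \onto \groupA/\kerclassZnZinv{a} \isoto \imgclassZnZinv{a} \into \fieldcyclgroupZnZinv{a},\]
where the last arrow is the canonical character $\carcyclgroup{\imgclassZnZinv{a}}$ of the cyclic group $\imgclassZnZinv{a}$ produced in \subsctn{\ref{subsection:cycl.grp-field}}. Extending linearly, the $\Q$-algebra homomorphism $\Q[\groupA] \to \End_{\Q}(\fieldcyclgroupZnZinv{a})$ defining the module structure therefore factors through the surjection $\Q[\groupA] \onto \Q[\imgclassZnZinv{a}]$ induced by $\groupA \onto \imgclassZnZinv{a}$, and the resulting $\Q[\imgclassZnZinv{a}]$-module structure on $\fieldcyclgroupZnZinv{a}$ is precisely the one furnished by the preceding proposition applied to $C = \imgclassZnZinv{a}$.

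The key observation is that pulling back a module structure along a surjective ring homomorphism preserves both the lattice of submodules and the algebra of endomorphisms: a $\Q$-subspace $V \subset \fieldcyclgroupZnZinv{a}$ is stable under the $\Q[\groupA]$-action if and only if it is stable under the $\Q[\imgclassZnZinv{a}]$-action (both conditions amount to $V$ being closed under multiplication by every element of $\imgclassZnZinv{a} \subset \fieldcyclgroupZnZinv{a}$), and similarly a $\Q$-linear endomorphism of $\fieldcyclgroupZnZinv{a}$ commutes with the $\Q[\groupA]$-action if and only if it commutes with the $\Q[\imgclassZnZinv{a}]$-action. Both equivalences are immediate from the fact that $\Q[\groupA] \onto \Q[\imgclassZnZinv{a}]$ is surjective with image acting in the same way on $\fieldcyclgroupZnZinv{a}$.

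Once this formal transfer is in hand, the proposition of \subsctn{\ref{subsection:cycl.grp-field}} applied to the cyclic group $\imgclassZnZinv{a}$ directly yields the simplicity of the $\Q[\groupA]$-module $\fieldcyclgroupZnZinv{a}$ and the canonical isomorphism $\fieldcyclgroupZnZinv{a} \isoto \End_{\Q[\groupA]}(\fieldcyclgroupZnZinv{a})$ given by multiplication in $\fieldcyclgroupZnZinv{a}$. I do not foresee any genuine obstacle: the mathematical content lies entirely in the cyclic-group case already proved, and the only minor point to keep in mind is that the canonical isomorphism one obtains here is, by construction, the one induced by left multiplication of $\fieldcyclgroupZnZinv{a}$ on itself, so that it agrees naturally with the identification in \subsctn{\ref{subsection:cycl.grp-field}}.
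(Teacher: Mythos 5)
Your reduction to the cyclic-group case is correct and is clearly the intended argument: the paper itself states this proposition without proof, immediately after defining $\carcyclgroupZnZinv{a}$ as the composite $\groupA \onto \groupA/\kerclassZnZinv{a} \isoto \imgclassZnZinv{a} \into \fieldcyclgroupZnZinv{a}$, so the transfer of simplicity and of the endomorphism ring along the surjection $\Q[\groupA] \onto \Q[\imgclassZnZinv{a}]$ is exactly what is implicitly invoked. Your observation that submodules and commuting endomorphisms are the same for both module structures closes the only point that needed checking.
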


\subsection{The stabilizer \texorpdfstring{$\SaQell$}{Sabar} in \texorpdfstring{$\symmetricgroup_n$}{Sn} of an element \texorpdfstring{$\classZnZinv{a} \in \ZnZinv \backslash \cargroupA$}{abar in (Z/nZ)*\textbackslash Â}}\label{subsection:structure.SaQell}

The group $\Sn$ acts on $\cargroupA$ by ${}^\sigma[a_1,\dots,a_n] = [a_{\sigma^{-1}(1)},\dots,a_{\sigma^{-1}(n)}]$. This action commutes to that of $\ZnZinv$ and factors as an action of $\Sn$ on $\ZnZinv \backslash \cargroupA$. We designate by $\SaQell$\label{definition:SaQell} the fixator of $\classZnZinv{a}$ in $\symmetricgroup_n$.

If $\sigma \in \SaQell$, there exists a unique $k \in \ZnaZinv$ such that $\sigmaa = ka$; we denote it by $\ka(\sigma)$\label{definition:ka}. The map $\ka \colon \SaQell \to \ZnaZinv$ defined in that way is a group homomorphism which is not surjective in general\footnote{\label{footnote:pas.surj}Consider $n=5$ and $a = [0,0,1,1,3]$: we have $\na = 5$, but there is no $\sigma \in \symmetricgroup_5$ such that $\sigmaa = 2a$.}. Its kernel is the group $\SaQellbar$ from \subsctn{\ref{subsection:Qellbar[G]:structure.SaQellbar}}; in particular, $\SaQellbar$ is a normal subgroup of $\SaQell$. Let us note that the map $\ka$ only depends on $\classZnZinv{a}$, not on $a$.

From the definition of $\nprimea$, there is an $i$ such that $a_1 = a_i + \nprimea$, i.e. $\nprimea = a_1-a_i \in \fa\ZnZ$. Thus, there is an integer $\ea$\label{definition:ea} such that $\nprimea = \ea \fa$ and we have $n = \da \ea \fa$ and $\na = \da \ea$. The integer $\ea$ only depends on $\classZnZinv{a}$, not on $a$.

\begin{theorem}
The image of the homomorphism $\ka \colon \SaQell \to \ZnaZinv$ contains the elements of $\ZnaZinv$ which are $\equiv 1 \mod \ea$ and is thus the preimage of a subgroup of $\ZeaZinv$ by the canonical surjection $\ZnaZinv \to \ZeaZinv$.
\end{theorem}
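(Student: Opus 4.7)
The plan is to reduce the statement to a single observation about multisets in $\ZnaZ$. Since $\ka$ is a group homomorphism, $\img \ka$ is already a subgroup of $\ZnaZinv$, so the second assertion is equivalent to $\img \ka$ containing the kernel of the reduction $\ZnaZinv \to \ZeaZinv$, i.e.\ every $k \in \ZnaZinv$ with $k \equiv 1 \mod \ea$. Fixing such a $k$, the task becomes: exhibit $\sigma \in \Sn$ with $\sigmaa = ka$. Writing $\pi = \sigma^{-1}$, I would look for a permutation $\pi$ and a constant $c \in \ZnZ$ satisfying $a_{\pi(i)} = k a_i + c$ for every $i$; equivalently, the multiset $\set{ka_1,\dots,ka_n}$ should be a translate in $\ZnZ$ of $M = \set{a_1,\dots,a_n}$.

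To produce such a translation I would first pass to a more convenient coordinate. Because $\fa\ZnZ$ contains every difference $a_i - a_{i'}$, one may write $a_i = a_1 + \fa \tilde{a}_i$ for uniquely determined $\tilde{a}_i \in \ZnaZ$, using the isomorphism $\ZnaZ \isoto \fa\ZnZ$, $j \mapsto \fa j$. Let $\tilde{M} = (\tilde{a}_1,\dots,\tilde{a}_n)$ be the associated multiset in $\ZnaZ$. By the very definition of $\nprimea = \ea\fa$, the translation stabilizer of $M$ in $\ZnZ$ is $\ea\fa\ZnZ$, which corresponds under the isomorphism above to the translation stabilizer $\ea\ZnaZ$ of $\tilde{M}$. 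A routine unwinding shows that $kM$ is a translate of $M$ in $\ZnZ$ if and only if $k\tilde{M} = \tilde{M} + \tilde{c}$ for some $\tilde{c} \in \ZnaZ$, so it suffices to establish the latter.

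The main calculation is then essentially a one-liner: since $k - 1 \in \ea\ZnaZ$, $(k-1)\tilde{a}_i \in \ea\ZnaZ$ for each $i$, so $k\tilde{a}_i \in \tilde{a}_i + \ea\ZnaZ$. Encoding $\tilde{M}$ as its multiplicity function $f \colon \ZnaZ \to \N$ and invoking the $\ea\ZnaZ$-invariance of $f$, one obtains $f(ky) = f(y)$ for every $y \in \ZnaZ$, that is $k\tilde{M} = \tilde{M}$ (so in fact $\tilde{c} = 0$ works). Reading a permutation $\pi$ off this multiset equality gives $a_{\pi(i)} = k a_i + (1-k)a_1$, and $\sigma = \pi^{-1}$ is the sought-for element. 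I expect no real conceptual obstacle; the only care needed is the bookkeeping of ambient rings ($\ZnZ$ for the $a_i$, $\ZnaZ$ for the $\tilde{a}_i$ and for $k$) and the verification that $k a_i$ is independent of the chosen integer lift of $k$, which is guaranteed by $\na a = 0$ in $\cargroupA$.
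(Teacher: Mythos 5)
Your proof is correct and follows essentially the same route as the paper's: both reduce the claim to showing that the multiset $\set{a_1,\dots,a_n}$ is preserved by $b \mapsto kb - ka_1 + a_1$, using the key fact that $(k-1)(a_i-a_1)$ lies in $\ea\fa\Z/n\Z = \nprimea\Z/n\Z$ (your computation $(k-1)\tilde{a}_i \in \ea\ZnaZ$ is exactly this after rescaling by $\fa$). The paper phrases it via the counting sets $I(b)$ in $\ZnZ$ rather than your normalized coordinates $\tilde{a}_i \in \ZnaZ$, but the translation $j = a_1 - ka_1$ and the underlying argument are identical.
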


\begin{proof}
Given $k \in \ZnZinv$ such that $k \equiv 1 \mod \ea$, we must find a permutation $\sigma \in \symmetricgroup_n$ such that $\sigmaa = ka$. We only need to show that there exists $j$ such that, for all $b \in \ZnZ$, the sets $I(kb+j)$ and $I(b)$ have the same number of elements. The following lemma shows that we may take $j = -ka_1+a_1$.
\end{proof}

\begin{lemma}
If $k \equiv 1 \mod \ea$, then, for all $b \in \ZnZ$, $I(kb-ka_1+a_1)$ has the same number of elements as $I(b)$.
\end{lemma}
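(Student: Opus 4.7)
The plan is to reduce the lemma to a translation-invariance property of a multiplicity function. First, since every $a_i - a_1$ belongs to $\fa\ZnZ$ by the definition of $\fa$, I observe that if $b \in \ZnZ$ satisfies $b - a_1 \notin \fa\ZnZ$ then $I(b) = \emptyset$; moreover, because multiplication by $k \in \ZnZinv$ preserves the unique subgroup $\fa\ZnZ$ of order $\na$, the element $k(b-a_1)$ is also outside $\fa\ZnZ$, so $I(kb - ka_1 + a_1) = \emptyset$ as well and the statement is trivial. Hence I may restrict to $b = a_1 + \fa c$ for some uniquely determined $c \in \ZnaZ$, using the isomorphism $\ZnaZ \isoto \fa\ZnZ$, $x \mapsto \fa x$. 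Writing similarly $a_i = a_1 + \fa c_i$ with $c_i \in \ZnaZ$, one has $I(b) = \set{i \colon c_i = c}$ and $I(kb - ka_1 + a_1) = I(a_1 + \fa(kc)) = \set{i \colon c_i = kc}$. The goal then becomes to show that the multiplicity function $M \colon \ZnaZ \to \N$, $c \mapsto \card{\set{i \colon c_i = c}}$, satisfies $M(c) = M(kc)$ for every $c \in \ZnaZ$ whenever $k \equiv 1 \pmod{\ea}$.

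Next, I would invoke the definition of $\nprimea$ recalled in \rmrk{\ref{remark:def.naprime}}: for every $j \in \nprimea\ZnZ$ and every $b \in \ZnZ$, the sets $I(b+j)$ and $I(b)$ have equal cardinality. Translated through the parametrization $b = a_1 + \fa c$ and using $\nprimea = \ea\fa$, the subgroup $\nprimea\ZnZ$ of $\fa\ZnZ$ corresponds under the isomorphism above to $\ea\ZnaZ \subset \ZnaZ$. So the remark says exactly $M(c + \ea) = M(c)$ for all $c \in \ZnaZ$; equivalently, $M$ factors through the canonical projection $\ZnaZ \onto \ZeaZ$.

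Finally, if $k \equiv 1 \pmod{\ea}$, then for any $c \in \ZnaZ$ one has $kc - c = (k-1)c \in \ea\ZnaZ$, so $c$ and $kc$ have the same image in $\ZeaZ$, whence $M(c) = M(kc)$, as required. No essential obstacle arises in this argument; the only care needed is the bookkeeping of the two correspondences $\ZnaZ \isoto \fa\ZnZ$ and $\ea\ZnaZ \leftrightarrow \nprimea\ZnZ$, both realized by multiplication by $\fa$.
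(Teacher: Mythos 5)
Your proof is correct and is essentially the paper's own argument: the trivial case $b \not\equiv a_1 \pmod{\fa}$ is handled the same way, and in the main case your observation that $(k-1)c \in \ea\ZnaZ$ is, after transport through $x \mapsto \fa x$, exactly the paper's computation that $(kb-ka_1+a_1)-b=(k-1)(b-a_1)$ is a multiple of $\ea\fa=\nprimea$, followed by the same appeal to \rmrk{\ref{remark:def.naprime}}. The repackaging via the multiplicity function $M$ on $\ZnaZ$ is only bookkeeping, not a genuinely different route.
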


\begin{proof}
Consider $b \in \ZnZ$. Suppose that $b \equiv a_1 \mod{\fa}$, so that $(kb-ka_1+a_1) - b = (k-1)(b-a_1)$ is a multiple of $\ea \fa = \nprimea$ and thus $kb - ka_1 + a_1 \equiv b \mod{\nprimea}$; by \rmrk{\ref{remark:def.naprime}}, this implies that $I(kb - ka_1 + a_1)$ has the same number of elements as $I(b)$.

Suppose now that $b \not\equiv a_1 \mod{\fa}$ (and thus $I(b) = \emptyset$); in that case, $kb-ka_1$ is non zero mod $\fa$ and so, from the definition of $\fa$, $kb - ka_1 + a_1$ is not one of the $a_i$'s, which shows that $I(kb - ka_1 + a_1)$ is empty.
\end{proof}

We now determine the structure of $\SaQell$. Let us recall (see \rmrk{\ref{remark:da.nprimea:indt.a}}) that $\SaQellbarprime$ and $\SaQellbar$ depend only on $\classZnZinv{a}$, not on $a$.

\begin{theorem}
The group $\SaQellbarprime$ is a normal subgroup of $\SaQell$ and the following short exact sequence splits
\[1 \to \SaQellbarprime \to \SaQell \to \SaQell/\SaQellbarprime \to 1.\]
\end{theorem}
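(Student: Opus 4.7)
The plan is to handle normality and splitting separately.

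For normality, I would use that $\SaQellbarprime = \prod_{b \in \ZnZ}{\symmetricgroup_{I(b)}}$ from \subsctn{\ref{subsection:Qellbar[G]:structure.SaQellbar}} is generated by the transpositions $(i,i')$ with $a_i = a_{i'}$. Given $\sigma \in \SaQell$, the relation $\sigmaa = \ka(\sigma) a$ in $\cargroupA$, lifted to $\ZnZ^n$, shows that $\sigma$ sends each $I(b)$ bijectively onto some $I(b')$. In particular, $a_i = a_{i'}$ forces $a_{\sigma(i)} = a_{\sigma(i')}$, so the conjugate $\sigma(i,i')\sigma^{-1} = (\sigma(i),\sigma(i'))$ is still a generator of $\SaQellbarprime$, proving $\SaQellbarprime \triangleleft \SaQell$.

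For the splitting, I consider the induced action of $\SaQell$ on $B = \setst{b \in \ZnZ}{I(b) \neq \emptyset}$: each $\sigma \in \SaQell$ permutes $B$ via $\bar\sigma$ defined by $\sigma(I(b)) = I(\bar\sigma(b))$, yielding a group homomorphism $\pi \colon \SaQell \to \mathrm{Sym}(B)$ whose kernel is exactly $\SaQellbarprime$. Thus $\pi$ identifies $\SaQell/\SaQellbarprime$ with its image $H \leq \mathrm{Sym}(B)$. A section of $\pi$ is then constructed by fixing once and for all a numbering $i_1(b), \dots, i_{\card{I(b)}}(b)$ of each nonempty $I(b)$ (as in the construction of $\genSigmaQellbar$) and assigning to $\rho \in H$ the lift $\tilde\rho \in \symmetricgroup_n$ sending $i_l(b)$ to $i_l(\rho(b))$ for all $b \in B$ and $1 \leq l \leq \card{I(b)}$. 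Well-definedness and membership $\tilde\rho \in \SaQell$ both follow from the fact that any $\rho \in H$ is an \emph{affine} permutation of $B$, of the form $b \mapsto k^{-1}(b-j)$ for some $k \in \ZnaZinv$ and $j \in \ZnZ$, so that $\tilde\rho$ acts on the representative $(a_1,\dots,a_n)$ by the same affine shift. The one-line identity $\widetilde{\rho_1 \rho_2}(i_l(b)) = i_l((\rho_1\rho_2)(b)) = (\widetilde{\rho_1}\widetilde{\rho_2})(i_l(b))$ is then immediate from the definition, so $\rho \mapsto \tilde\rho$ is a group homomorphism $H \to \SaQell$ sectioning $\pi$, splitting the exact sequence.

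The main obstacle is confirming that the canonical lift $\tilde\rho$ of an arbitrary $\rho \in H$ genuinely lies in $\SaQell$; once one notes that $H$ consists of affine permutations of $B$ (a direct consequence of $\sigmaa = \ka(\sigma) a$), this reduces to unwinding definitions. Everything else is bookkeeping with the partition $\{I(b)\}_{b \in B}$ and the fixed numberings.
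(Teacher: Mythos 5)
Your proof is correct and takes essentially the same route as the paper: the paper also realizes the quotient as a group of affine permutations of the values (via $\sigma \mapsto (\usigma,\vsigma)$ with $a_{\sigma(i)} = \vsigma a_i + \usigma\fa$, whose kernel is $\SaQellbarprime$) and builds the section exactly as you do, by fixing a numbering of each $I(b)$ and lifting the affine map to $i_l(b) \mapsto i_l(\vsigma b + \usigma\fa)$. The only cosmetic differences are your generator-conjugation argument for normality and your viewing the quotient inside $\mathrm{Sym}(B)$ rather than inside $\ZnaZ \rtimes \ZnaZinv$; note the paper first normalizes the representative so every $a_i$ is a multiple of $\fa$, which is also the cleanest way to make your ``affine with $k \in \ZnaZinv$'' description literally well defined (otherwise lift $\ka(\sigma)$ to $\ZnZinv$).
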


\begin{proof}
From the definition of $\fa$, it is possible to choose the representative $(a_1,\dots,a_n)$ of $a$ in $(\ZnZ)^n$ such that each $a_i$ is a multiple of $\fa$; because $\fa \na = n$, the elements $wa_i$ and $w\fa$, where $w \in \ZnaZinv$, are well-defined in $\ZnZ$. If $\sigma \in \SaQell$, there is a unique pair $(\usigma,\vsigma) \in \ZnaZ \times \ZnaZinv$\label{definition:u.v} such that, for all $i$, we have $a_{\sigma(i)} = \vsigma a_i + \usigma \fa$. The uniqueness of $\vsigma$ comes from the fact that, as we have already seen (\rmrk{\ref{remarque:ka=a}}), a $k$ such that $ka = \sigmaa$ is defined mod $\na$ and the uniqueness of $\usigma$ comes from the fact that $\usigma \fa$ is unique mod $n$.

The map $\phi \colon \sigma \mapsto (\usigma,\vsigma)$\label{definition:phi.sigma.u.v} is a group homomorphism from $\SaQell$ to $\ZnaZ \rtimes \ZnaZinv$ (the group law being $(u,v)(u',v') = (u+vu',vv')$); its kernel is $\SaQellbarprime$ which is thus a normal subgroup of $\SaQell$.

For each $b \in \ZnZ$, we choose a numbering $i_1(b)$, \dots, $i_{\card{I(b)}}(b)$ of the elements of $I(b)$. Given $(u,v) \in \phi(\SaQell)$, if $I(b)$ is non-empty, then $b$ is a multiple of $\fa$ (by assumption) and $I(b)$ has the same number of elements than $I(vb+u\fa)$ as $a_{\sigma(i)} = v a_i + u \fa$ for all $\sigma \in \SaQell$ satisfying $\phi(\sigma) = (u,v)$. Thus, there is a permutation $\sigma_{u,v} \in \Sn$ sending $i_l(b)$ on $i_l(vb+u\fa)$ for all $b \in \ZnZ$ and $1 \leq l \leq \card{I(b)}$. From its definition, this permutation belongs to $\SaQell$ and $\phi(\sigma_{u,v}) = (u,v)$. Moreover, the map $(u,v) \mapsto \sigma_{u,v}$ is a group homomorphism since we have
\[v'(vb+u\fa)+u'\fa = (v'v)b + (u'+v'u)\fa.\]
This shows that $(u,v) \mapsto \sigma_{u,v}$ is a splitting map for $\phi$ and thus the short exact sequence $1 \to \SaQellbarprime \to \SaQell \to \SaQell/\SaQellbarprime \to 1$ splits.
\end{proof}

\begin{remarks}\label{remarque:lien.ja.ua}
\begin{subremarks}
    \item Even though $\SaQellbar$ is a normal subgroup of $\SaQell$, the exact short sequence $1 \to \SaQellbar \to \SaQell \to \SaQell/\SaQellbar \to 1$ does not always splits. Indeed, consider the case $n = 24$ and the sequence $(a_1,\dots,a_{24})$ with four times each of the numbers $0$, $2$, $12$, $14$ and two times each of the numbers $1$, $7$, $13$, $19$; we have $\na = 24$, but, even though $5$ is of order $2$ in $\ZnZinv[24]$, the only elements $(u,v)$ of the image of $\phi$ such that $v=5$ are $(2,5)$ and $(14,5)$ which are of order $4$.
    \item \label{subremark:ua.ea} When $\sigma \in \SaQellbar$, we have $\vsigma = 1$ and $\usigma \in \ea\ZnaZ$; indeed, if $\sigma \in \SaQellbar$, then $\vsigma = 1$ and so $a_{\sigma(i)} - a_i = \usigma \fa$; thus, from the definition of $\nprimea$, $\usigma \fa$ is a multiple of $\nprimea = \ea \fa$ and hence $\usigma$ is a multiple of $\ea$.
    \item\label{subremark:link.ja.ua} With the notations of \subsctn{\ref{subsection:Qellbar[G]:structure.SaQellbar}}, we have, for all $s \in\SaQellbar$, $\ja(s) = \fa \us$. More precisely, $\ja \colon \SaQellbar \to \nprimea \ZnZ$ is the compound of the homomorphism $\sigma \mapsto \us$ sending $\SaQellbar$ into $\ea\ZnaZ$ and of the isomorphism of $\ea\ZnaZ$ onto $\nprimea\ZnZ$ deduced from the multiplication by $\fa$.
\end{subremarks}
\end{remarks}

\subsection{Construction of \texorpdfstring{$\Q[\groupG]$}{Q[G]}-modules and study of their extension of scalars to \texorpdfstring{$\Qellbar$}{Qlbar}}\label{subsection:Qell[G].ext.Qellbar}

The aim of this~\subsctn{\ref{subsection:Qell[G].ext.Qellbar}} is to construct $\Q[\groupG]$-modules which, after extension of scalars to $\Qellbar$, will give back the representations considered in \sctn{\ref{section:Qellbar[G]}}.

Before we begin, let us recall that the field $\Ka$ only depends on $\classZnZinv{a}$, not on $a$, but that $\cara[k] = \cara^k$ (see \rmrk{\ref{remarque:Ka.cara.ind.a}}). If $v \in \ZnaZinv$, we denote by $\theta_v$\label{definition:morphisme.theta.v} the automorphism of the field $\Ka$ sending every \nth{$\na$} root of unity onto its \nth{$v$} power.

Consider $a \in \cargroupA$; we choose a representative  $(a_1,\dots,a_n) \in (\ZnZ)^n$ of $a$ such that the $a_i$ are all multiple of $\fa$ and continue to use the notations of~\subsctn{\ref{subsection:structure.SaQell}} concerning the integers $\usigma$ and $\vsigma$.

\begin{proposition}
If $\omega$\label{definition:omega} is a \nth{$\na$} root of unity in $\Ka$, the following map defines a representation of $\groupA \rtimes \SaQell$ into $\Ka$\label{definition:mu.a.omega}
\[\fullapp{\mua}{\groupA \rtimes \SaQell}{\End_\Q(\Ka)}{(\zeta,\sigma)} {\cara(\zeta)\signature(\sigma)\omega^{\usigma}\theta_{\vsigma}}\]
Let $\Mua$\label{definition:Mua} be the $\Q[\groupA \rtimes \SaQell]$-module $\Ka$ thus defined. It has rank $\eulerphi(\na)$ (where $\eulerphi$ is Euler's totient function), and, up to isomorphism, it is independent of the choice of the representative $(a_1,\dots,a_n)$ of $a$ such that each $a_i$ is divisible by $\fa$.
\end{proposition}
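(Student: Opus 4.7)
The proposition has three independent pieces: the multiplicativity of $\mua$, the dimension count, and invariance under the residual freedom in choosing $(a_1,\dots,a_n)$. I would tackle them in that order, with multiplicativity being the main computation.

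The key preliminary identity is that for every $\zeta \in \groupA$ and $\sigma \in \SaQell$ one has $\cara({}^{\sigma}\zeta) = \theta_{\vsigma}(\cara(\zeta))$. This reduces to a computation inside $\Fqnonzero$: writing $\zeta = [\zeta_1,\dots,\zeta_n]$, the value of the character $a$ on ${}^{\sigma}\zeta = [\zeta_{\sigma^{-1}(1)},\dots,\zeta_{\sigma^{-1}(n)}]$ is $\prod_i \zeta_{\sigma^{-1}(i)}^{a_i} = \prod_j \zeta_j^{a_{\sigma(j)}}$. Substituting $a_{\sigma(j)} = \vsigma a_j + \usigma \fa$ from \subsctn{\ref{subsection:structure.SaQell}} splits this into $\bigl(\prod_j \zeta_j^{a_j}\bigr)^{\vsigma} \cdot \bigl(\prod_j \zeta_j\bigr)^{\usigma \fa}$; the second factor is $1$ by the defining relation of $\groupA$, so $a({}^{\sigma}\zeta) = a(\zeta)^{\vsigma}$. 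Translated through the canonical character $\imgclassZnZinv{a} \into \Ka$ of \subsctn{\ref{subsection:cycl.grp-field}}, this becomes $\cara({}^{\sigma}\zeta) = \theta_{\vsigma}(\cara(\zeta))$.

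With this in hand, multiplicativity is a direct composition in $\End_\Q(\Ka)$. Computing $\mua(\zeta,\sigma) \circ \mua(\zeta',\sigma')$ applied to $x \in \Ka$ pushes the inner scalars $\cara(\zeta')$ and $\omega^{\usigmaprime}$ through $\theta_{\vsigma}$, producing $\cara(\zeta')^{\vsigma} = \cara({}^{\sigma}\zeta')$ and $\omega^{\vsigma \usigmaprime}$; the latter combines with the outer $\omega^{\usigma}$ into $\omega^{\usigma + \vsigma \usigmaprime}$, while the two field automorphisms compose to $\theta_{\vsigma \vsigmaprime}$. The result then coincides with $\mua(\zeta \cdot {}^{\sigma}\zeta', \sigma\sigma')$ thanks to the homomorphism $\sigma \mapsto (\usigma, \vsigma) \in \ZnaZ \rtimes \ZnaZinv$ established in \subsctn{\ref{subsection:structure.SaQell}} and multiplicativity of $\signature$.

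The rank is immediate: by the first theorem of \subsctn{\ref{subsection:cycl.grp-field}}, $\Ka = \fieldcyclgroup{\imgclassZnZinv{a}}$ is isomorphic to $\Q(\unityroots{\na})$, of dimension $\eulerphi(\na)$ over $\Q$. For independence of the representative, once each $a_i$ is required to be a multiple of $\fa$, the only remaining freedom is translating the whole tuple by a common $\fa c_0$; this leaves $\vsigma$ unchanged and replaces $\usigma$ by $\usigma + (1-\vsigma)c_0 \bmod \na$, as follows from the defining relation of $(\usigma,\vsigma)$ applied to the translated tuple. I would then verify that multiplication by $\omega^{c_0}$ provides a $\Q$-linear intertwiner $\Ka \to \Ka$ between the two modules: the conjugation relation $\omega^{-c_0}\theta_{\vsigma}\omega^{c_0} = \omega^{(\vsigma-1)c_0}\theta_{\vsigma}$ (obtained by applying $\theta_{\vsigma}$ to $\omega^{c_0}$) precisely cancels the $(1-\vsigma)c_0$ shift in the exponent of $\omega$. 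The only subtle point in the whole argument is tracking the non-commutation between the scalar multiplications $\omega^{\bullet}$ and the Galois automorphisms $\theta_{\bullet}$ when composing inside $\End_\Q(\Ka)$.
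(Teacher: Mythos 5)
Your proposal is correct and follows essentially the same route as the paper: verify multiplicativity by pushing scalars through $\theta_{\vsigma}$ and reducing to the identity $\cara({}^{\sigma}\zeta') = \cara(\zeta')^{\vsigma} = \theta_{\vsigma}(\cara(\zeta'))$ (which the paper gets from ${}^{\sigma^{-1}}a = \vsigma a$ and $\carcyclgroupZnZinv{ka} = \carcyclgroupZnZinv{a}^k$, while you compute it directly in $\Fqnonzero$ using $a_{\sigma(j)} = \vsigma a_j + \usigma\fa$ and $\zeta_1\cdots\zeta_n = 1$ — an equivalent verification), then note $\dim_\Q \Ka = \eulerphi(\na)$, and handle the change of representative $a_i' = a_i + j\fa$ exactly as the paper does, via conjugation by a power of $\omega$.
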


\begin{proof}
Let us first check that $\mua$ is a group homomorphism. We have
\begin{align*}
\mua(\zeta,\sigma)\mua(\zeta ',\sigma ')
& = \cara(\zeta)\signature(\sigma)\omega^{\usigma}\theta_{\vsigma}\cara(\zeta ')\signature(\sigma ')\omega^{\usigmaprime}\theta_{\vsigmaprime} \\
& = \cara(\zeta) \cara(\zeta ')^{\vsigma} \signature(\sigma)\signature(\sigma ')\omega^{\usigma+\usigmaprime\vsigma} \theta_{\vsigma \vsigmaprime},
\end{align*}
and
\begin{align*}
\mua((\zeta,\sigma)(\zeta ',\sigma '))
& = \mua(\zeta\prescriptzeta{\sigma} ',\sigma\sigma ') = \cara(\zeta\prescriptzeta{\sigma} ')\signature(\sigma \sigma ')\omega^{\usigma + \vsigma \usigmaprime}\theta_{\vsigma \vsigmaprime} \\
& = \cara(\zeta)\cara(\prescriptzeta{\sigma} ') \signature(\sigma) \signature(\sigma ')\omega^{\usigma + \vsigma \usigmaprime}\theta_{\vsigma \vsigmaprime}.
\end{align*}
To prove these two quantities are equal, we need to show that $\cara(\prescriptzeta{\sigma} ') = \cara(\zeta ')^{\vsigma}$:
\begin{align*}
\cara(\prescriptzeta{\sigma} ') = \car_{{}^{\sigma^{-1}}a}(\zeta ') = \car_{\vsigma a}(\zeta ') = \cara(\zeta ')^{\vsigma}.
\end{align*}

We now proceed to show that $\mua$ does not depends, up to isomorphism, on the choice of the representative $(a_1,\dots,a_n)$ of $a$ such that each $a_i$ is a multiple of $\fa$. If $(a_1',\dots,a_n')$ is another representative, there exists $j$ such that $a_i' = a_i + j\fa$ for all $i$, and so
\[a_{\sigma(i)}' = a_{\sigma(i)} + j\fa = \vsigma a_i + \usigma \fa + j\fa = \vsigma a_i' + (\usigma + j(1-\vsigma))\fa.\]
Thus, $\vsigma ' = \vsigma$ and $\usigma ' = \usigma + j(1-\vsigma)$, hence
\[\mua '(\zeta,\sigma) = \cara(\zeta) \signature(\sigma) \omega^{\usigma + j(1-\vsigma)} \theta_{\vsigma} = \omega^j \mua(\zeta,\sigma) \, \omega^{-j}.\qedhere\]
\end{proof}

We now study the extension of scalars $\Mua \tensor[\Q] \Qellbar$. We use the isomorphism $t$ from \subsctn{\ref{subsection:Qellbar[A]:cargroup.A.Qellbar}} between $\unityroots{n}(\Fq)$ and $\unityroots{n}(\Qellbar)$; there exists a unique embedding $\iotaa$ of $\Ka$ in $\Qellbar$ such that the following diagram is commutative:
\[\arraycolsep=1.4pt\begin{array}{ccccc}
\imgclassZnZinv{a} & \xinto{\phantom{a}} & \unityroots{n}(\Fq) & \xinto{t} & \unityroots{n}(\Qellbar)\\[3pt]
\big\Vert && && \rotatebox[origin=c]{-90}{$\xinto{}$} \\[5pt]
\imgclassZnZinv{a} & \xinto[]{\hphantom{a}} & \Ka &\xinto[\iotaa]{\hphantom{t}}& \,\;\Qellbar\,\;.
\end{array}\]

This embedding only depends on $\classZnZinv{a}$, not on $a$. Moreover, if we identify $a \in \cargroupA$ to a character $\groupA \to \unityroots{n}(\Fq)$, the following diagram is commutative:
\[\arraycolsep=1.4pt\begin{array}{ccccc}
A & \xinto{a} & \unityroots{n}(\Fq) & \xinto{t} & \unityroots{n}(\Qellbar)\\[3pt]
\big\Vert && && \rotatebox[origin=c]{-90}{$\xinto{}$} \\[5pt]
A & \xinto[\cara]{\hphantom{a}} & \Ka &\xinto[\iotaa]{\hphantom{t}}& \,\;\Qellbar\,\;.
\end{array}\]
In the remainder of this~\subsctn{\ref{subsection:Qell[G].ext.Qellbar}}, we identify $\Ka$ to the subfield $\iotaa(\Ka)$ of $\Qellbar$ thanks to $\iotaa$.

With this identification, we have an isomorphism
\[\fulliso{\delta}{\Ka \tensor[\Q] \Qellbar}{\Qellbar^{\ZnaZinv}}{k \tensor \lambda}{(\theta_{v}(k)\lambda)_{v \in \ZnaZinv}}\]
Because
\begin{align*}
k \tensor \lambda & \xmapsto{\mua(\zeta,\sigma) \tensor \Id_{\Qellbar}} \cara(\zeta)\signature(\sigma)\omega^{\usigma}\theta_{\vsigma}(k) \tensor \lambda \\
& \xmapsto{\hspace{2.6em}\delta\hspace{2.6em}} (\cara(\zeta)^v\signature(\sigma)\omega^{v\usigma}\theta_{v\vsigma}(k)\lambda)_{v \in \ZnaZinv},
\end{align*}
the endomorphism of $\Qellbar^{\ZnaZinv}$ deduced from $\mua(\zeta,\sigma) \tensor \Id_{\Qellbar}$ by the isomorphism $\delta$ is given by
\begin{equation}\label{formule:mua.Qellbar}
(x_v)_{v\in\ZnaZinv} \mapsto (\cara[v](\zeta)\signature(\sigma)\omega^{v\usigma}x_{v\vsigma})_{v\in\ZnaZinv}.
\end{equation}

\begin{proposition}\label{result:Mua.Qellbar}
Let $\ua$ be the homomorphism $\sigma \mapsto \usigma$ of $\SaQellbar$ into $\ea\ZnaZ$; it does not depend on the choice of the representative $(a_1,\dots,a_n)$ of $a$ and we have $\uka = k\ua$ for all $k \in \ZnaZinv$ (see \rmrks{\ref{subremark:link.ja.ua}} and \ref{subremark:ja:depends.on.a}). The $\Qellbar[\groupA \rtimes \SaQell]$-module $\Mua \tensor[\Q] \Qellbar$ is isomorphic to
\[\bigoplus_{k \in \ZnaZinv/\img \ka}{\Ind_{\groupA \rtimes \SaQellbar}^{\groupA \rtimes \SaQell}(ka \tensor \signature \tensor \omega^{u_{ka}})}.\]
\end{proposition}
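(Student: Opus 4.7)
The plan is to split the proposition into two parts: the statement about $\ua$, which is essentially bookkeeping from Remark~\ref{subremark:ua.ea} and Remark~\ref{subremark:ja:depends.on.a}; and the decomposition of the scalar extension, which follows by analyzing the explicit description~\eqref{formule:mua.Qellbar} coset by coset.

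First, I would set up the map $\ua$. For $\sigma\in\SaQellbar$ one has $\vsigma=1$, so from $a_{\sigma(i)}=a_i+\usigma\fa$ the proof of Proposition~\ref{result:SaQellbar.semi-direct.prod} and Remark~\ref{subremark:ua.ea} already give that $\sigma\mapsto\usigma$ takes values in $\ea\ZnaZ$. Multiplicativity is immediate from $(u,1)(u',1)=(u+u',1)$ inside $\ZnaZ\rtimes\ZnaZinv$. Independence of representative: if we replace $(a_i)$ by $(a_i+j\fa)$, the proof of Proposition in~\subsctn{\ref{subsection:Qell[G].ext.Qellbar}} shows that $\usigma$ changes by $j(1-\vsigma)$, which vanishes when $\vsigma=1$. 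For $\uka=k\ua$, I would simply observe that if $(a_1,\dots,a_n)$ is a representative of $a$ with $\fa\mid a_i$, then $(ka_1,\dots,ka_n)$ is such a representative of $ka$, and, for $\sigma\in\SaQellbar[ka]=\SaQellbar$, the relation $(ka)_{\sigma(i)}=(ka)_i+(ku_\sigma)\fa$ identifies $u_{ka}(\sigma)$ with $k\usigma$.

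Next, for the decomposition, I would read off the action on $\Qellbar^{\ZnaZinv}$ from formula~\eqref{formule:mua.Qellbar}. The group $\SaQell$ acts on the index set $\ZnaZinv$ on the right by $v\cdot\sigma=v\vsigma$, and the homomorphism $\sigma\mapsto\vsigma$ has image $\img\ka$ (the two images differ by inversion, which preserves the subgroup). Hence the orbits are exactly the cosets $k\img\ka\in\ZnaZinv/\img\ka$, and for each such coset the subspace
\[V_k \;=\; \bigoplus_{v\in k\img\ka}\Qellbar\,e_v\]
is stable under $\groupA\rtimes\SaQell$. This gives the decomposition $\Qellbar^{\ZnaZinv}=\bigoplus_k V_k$ and reduces the proposition to identifying each $V_k$ with an induced representation.

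Finally I would identify $V_k$. The stabilizer of the line $\Qellbar e_k$ in $\groupA\rtimes\SaQell$ consists of the $(\zeta,\sigma)$ with $k\vsigma=k$, i.e.\ $\vsigma=1$, i.e.\ $\sigma\in\SaQellbar$; so the stabilizer is $\groupA\rtimes\SaQellbar$. Restricting~\eqref{formule:mua.Qellbar} with $\vsigma=1$ and $v=k$ gives
\[(\zeta,\sigma)\cdot e_k=\cara[k](\zeta)\,\signature(\sigma)\,\omega^{k\usigma}\,e_k,\]
which by $\cara[k]=\carcyclgroupZnZinv{ka}$ and $k\usigma=u_{ka}(\sigma)$ is exactly the character $ka\tensor\signature\tensor\omega^{u_{ka}}$ of $\groupA\rtimes\SaQellbar$. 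Since the orbit of $e_k$ under $\groupA\rtimes\SaQell$ spans $V_k$, the standard recognition principle for induced representations yields $V_k\iso\Ind_{\groupA\rtimes\SaQellbar}^{\groupA\rtimes\SaQell}(ka\tensor\signature\tensor\omega^{u_{ka}})$, and summing over $k\in\ZnaZinv/\img\ka$ concludes the proof. The main obstacle, though a small one, is keeping straight the two conventions on the right action of $\SaQell$ (via $\vsigma$) versus the action on $\cargroupA$ (via $\ka(\sigma)=\vsigma^{-1}$), so that the coset parametrization $\ZnaZinv/\img\ka$ matches correctly with the induced summands indexed by $ka$.
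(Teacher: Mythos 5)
Your proof is correct and follows essentially the same route as the paper: both read off the action from \frml{\eqref{formule:mua.Qellbar}}, decompose $\Mua \tensor[\Q] \Qellbar$ into lines indexed by $\ZnaZinv$ carrying the characters $ka \tensor \signature \tensor \omega^{u_{ka}}$, group them according to the cosets of $\img\ka$, and recognize each block as the induced module $\Ind_{\groupA \rtimes \SaQellbar}^{\groupA \rtimes \SaQell}(ka \tensor \signature \tensor \omega^{u_{ka}})$. Your treatment is merely a bit more explicit than the paper's (the preliminary facts about $\ua$ and the inversion relating $\vsigma$ to $\ka(\sigma)$, and the recognition principle for induced representations, which the paper only cites or asserts), but the underlying argument is the same.
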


\begin{proof}
\frml{\eqref{formule:mua.Qellbar}} above shows that the isotypic components of the $\Qellbar[\groupA]$-module $\Mua \tensor[\Q] \Qellbar$ are of the form $ka$ for $k \in \ZnaZinv$ (as in \subsctn{\ref{subsection:Qellbar[A]:cargroup.A.Qellbar}}, we identify $a$ to a character taking values in $\Qellbar$); each of these isotypic components is a direct sum of representations of dimension $1$ isomorphic to $ka$.

Let's now determine the action of the group $\SaQellbar$. As $\SaQellbar[ka] = \SaQellbar$ for all $k \in \ZnZinv$, the group $\SaQellbar$ stablizes each one-dimensional piece isomorphic to $ka$ of the $\Qellbar[\groupA]$-module $\Mua \tensor[\Q] \Qellbar$ and, by \frml{\eqref{formule:mua.Qellbar}}, $\SaQellbar$ acts on a piece isomorphic to $ka$ by multiplication by $\signature(\sigma)\omega^{k\usigma} = \signature(\sigma)\omega^{u_{ka}}$.

This shows that the $\Qellbar[\groupA \rtimes \SaQellbar]$-module $\Mua \tensor[\Q] \Qellbar$ is isomorphic to
\begin{equation}\label{eqn:Ma.tenseur.Qlbar}
\bigoplus_{k \in \ZnaZinv}{(ka \tensor \signature \tensor \omega^{u_{ka}})}.
\end{equation}
From \frml{\eqref{formule:mua.Qellbar}} and the fact that $\SaQell/\SaQellbar = \img\ka = \setst{\vsigma}{\sigma \in \SaQell}$, we have the following isomorphism of $\Qellbar[\groupA \rtimes \SaQell]$-modules:
\[\bigoplus_{k \in \img\ka}{(ka \tensor \signature \tensor \omega^{u_{ka}})} \iso \Ind_{\groupA \rtimes \SaQellbar}^{\groupA \rtimes \SaQell}{(a \tensor \signature \tensor \omega^{u_{a}})}.\]
From this, we get the announced result.
\end{proof}

We deduce the following three corollaries.

\begin{corollary}\label{result:Mua.iso.Muaprime}
Up to isomorphism, $\Mua$ only depends on the \nth{$\da$} root of unity $\omega^{\ea}$. More precisely,
\[\Mua \iso \Muaprime \iff a' \in \ZnZinv a \quad \text{and} \quad \omega^{\ea} = \omega '^{\ea}.\]
\end{corollary}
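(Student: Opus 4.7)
The whole argument rests on \prpstn{\ref{result:Mua.Qellbar}}, which gives the decomposition
\[\Mua \tensor[\Q] \Qellbar \iso \bigoplus_{k \in \ZnaZinv/\img \ka}{\Ind_{\groupA \rtimes \SaQellbar}^{\groupA \rtimes \SaQell}(ka \tensor \signature \tensor \omega^{u_{ka}})}\]
and a similar formula for $\Muaprime$. The strategy is to compare these two decompositions as $\Qellbar[\groupA \rtimes \SaQell]$-modules (which requires $\SaQell = \SaQell[a']$, hence $\classZnZinv{a}=\classZnZinv{a'}$), and then descend the isomorphism back to $\Q$ using the fact that two semisimple $\Q[\groupA \rtimes \SaQell]$-modules having isomorphic scalar extensions to $\Qellbar$ are already isomorphic over $\Q$ (characters of $\Q[G]$-modules distinguish isomorphism classes).

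For the ``if'' direction, assume $a' = k_0 a$ with $k_0 \in \ZnaZinv$ and $\omega^{\ea} = \omega'^{\ea}$. Using the identity $u_{ma} = m u_a$ (\rmrk{\ref{subremark:ja:depends.on.a}} together with the formula defining $u$), the change of variable $j = k k_0$ transforms the decomposition of $\Muaprime \tensor \Qellbar$ into a sum of $\Ind(ja \tensor \signature \tensor \omega'^{u_{ja}})$ over the same index set. For each such $j$, the character $\sigma \mapsto \omega'^{u_{ja}(\sigma)} = \omega'^{j u_a(\sigma)}$ of $\SaQellbar$ takes values in $\omega'$ raised to integers in $j \cdot \ea\ZnaZ = \ea\ZnaZ$ (since $j$ is a unit), so the hypothesis $\omega^{\ea} = \omega'^{\ea}$ implies it coincides with $\sigma \mapsto \omega^{u_{ja}(\sigma)}$. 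Summand-by-summand, the two decompositions agree.

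For the ``only if'' direction, suppose $\Mua \iso \Muaprime$. Extending scalars and examining the $\groupA$-characters appearing shows $\ZnaZinv a = \ZnaZinv a'$, so $a' \in \ZnZinv a$, say $a' = k_0 a$. Since each $\groupA$-character appears with multiplicity one in $\Mua \tensor \Qellbar$ (the total dimension is $\eulerphi(\na)$), the $a$-isotypic subspace (as $\groupA$-module) is one-dimensional. By the analysis of induced representations, it lies inside the summand indexed by $k \equiv k_0^{-1}$, and $\SaQellbar$ acts on it by $\signature \cdot \omega^{u_a}$ in $\Mua \tensor \Qellbar$ and by $\signature \cdot \omega'^{u_a}$ in $\Muaprime \tensor \Qellbar$ (using $u_{k_0^{-1}a'} = u_a$). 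Equating these two characters of $\SaQellbar$ gives $\omega^{u_a(\sigma)} = \omega'^{u_a(\sigma)}$ for all $\sigma \in \SaQellbar$; using \rmrk{\ref{subremark:link.ja.ua}}, the homomorphism $u_a \colon \SaQellbar \to \ea\ZnaZ$ is surjective (it is the composition of the surjection $\ja$ onto $\nprimea\ZnZ$ with the inverse of the multiplication-by-$\fa$ isomorphism), whence $\omega^{\ea} = \omega'^{\ea}$.

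The main point of care will be the bookkeeping between the three integers $\da$, $\ea$, $\fa$ and the identities $u_{ma}=mu_a$, $u_\sigma \in \ea\ZnaZ$ for $\sigma \in \SaQellbar$, and the surjectivity of $u_a$ onto $\ea\ZnaZ$; without these, one cannot correctly pin down the exponent modulo which $\omega$ is determined. The descent from $\Qellbar$ to $\Q$ is a standard consequence of the semisimplicity of $\Q[\groupA \rtimes \SaQell]$.
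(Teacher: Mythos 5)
Your proposal is correct and follows essentially the same route as the paper: descend from $\Qellbar$ to $\Q$ via the fact that isomorphism after extension of scalars implies isomorphism before, then compare the decomposition of \prpstn{\ref{result:Mua.Qellbar}} restricted first to $\groupA$ (forcing $a' \in \ZnZinv a$) and then to $\SaQellbar$ (forcing $\omega^{\ea} = \omega'^{\ea}$ via $\uka = k\ua$ and the surjectivity of $\ua$ onto $\ea\ZnaZ$). The only difference is organizational: where the paper first extracts $\omega^{\ea} = \omega'^{\kappa\ea}$ from the multiset of $\SaQellbar$-characters and then rules out $\kappa \neq 1$, you pin down the character directly on the one-dimensional $a$-isotypic line, which is a clean shortcut but not a different method.
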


\begin{proof}
As two representations isomorphic after extension of scalars are also isomorphic before (see \cite[\thrm{29.7}, \pg{200}]{Curtis.Reiner}), we only have to show the result for $\Mua \tensor[\Q] \Qellbar$. From \frml{\eqref{eqn:Ma.tenseur.Qlbar}}, we have
\[\Mua \tensor[\Q] \Qellbar|_A \iso \bigoplus_{k \in \ZnaZinv}{ka},\]
which shows that, if $\Mua \tensor[\Q] \Qellbar \iso \Muaprime \tensor[\Q] \Qellbar$, then $a' \in \ZnZinv a$. Let us now assume that $a' \in \ZnZinv a$ so that $\ea = \eaprime$. Recall (see \rmrk{\ref{subremark:ua.ea}} as well as the proof of \prpstn{\ref{result:SaQellbar.semi-direct.prod}}) that $\ua$ is a surjection of $\SaQellbar$ onto $\ea\ZnaZ$ with $\uka = k\ua$. By \frml{\eqref{eqn:Ma.tenseur.Qlbar}}, we have
\[\Mua \tensor[\Q] \Qellbar|_{\SaQellbar} \iso \signature \tensor \bigoplus_{k \in \ZnaZinv}{\omega^{ku_{a}}},\]
hence, if $\Mua \tensor[\Q] \Qellbar \iso \Muaprime \tensor[\Q] \Qellbar$, we have $\setst{\omega^{k\ua}}{k\in\ZnaZinv} = \setst{\omega '^{k\ua}}{k\in\ZnaZinv}$ and so there exists $\kappa \in \ZnaZinv$ such that $\omega^{\ea} = \omega '^{\kappa\ea}$.

Conversely, we assume that $a' \in \ZnZinv a$ and that there exists $\kappa \in \ZnaZinv$ such that $\omega^{\ea} = \omega '^{\kappa\ea}$ and prove that $\Mua \tensor[\Q] \Qellbar \iso \Muaprime \tensor[\Q] \Qellbar$ if and only if $\kappa = 1$. We write $a' = k'a$ so that we have an isomorphism of $\Qellbar[\groupA \rtimes \SaQellbar]$-modules
\begin{align*}
\Muaprime \tensor[\Q] \Qellbar & \iso \bigoplus_{k \in \ZnaZinv}{(kk'a \tensor \signature \tensor \omega '^{u_{kk'a}})} \\
& \qquad\qquad\qquad\qquad = \bigoplus_{k \in \ZnaZinv}{(\kappa ka \tensor \signature \tensor \omega '^{u_{\kappa ka}})} \\
& \qquad\qquad\qquad\qquad = \bigoplus_{k \in \ZnaZinv}{(\kappa ka \tensor \signature \tensor \omega^{u_{ka}})}.
\end{align*}
This shows that $\Muaprime \tensor[\Q] \Qellbar \iso \Mua \tensor[\Q] \Qellbar$ implies $\kappa = 1$. Conversely, if $\kappa = 1$, the isomorphism from \prpstn{\ref{result:Mua.Qellbar}} shows that
\begin{align*}
\Muaprime \tensor[\Q] \Qellbar
& \iso \bigoplus_{k \in \ZnaZinv/\img\ka}{\Ind_{\groupA \rtimes \SaQellbar}^{\groupA \rtimes \SaQell}(kk'a \tensor \signature \tensor \omega^{u_{kk'a}})} \\
& \iso \bigoplus_{k \in \ZnaZinv/\img\ka}{\Ind_{\groupA \rtimes \SaQellbar}^{\groupA \rtimes \SaQell}(ka \tensor \signature \tensor \omega^{u_{ka}})} \\
& \iso \Mua \tensor[\Q] \Qellbar.\qedhere
\end{align*}
\end{proof}

\begin{corollary}\label{result:decomp:Ma.Qellbar}
For each \nth{$\da$} root of unity $\etaa \in \Ka$, we denote by $\omega(\etaa) \in \Ka$ a \nth{$\na$} root of unity satisfying $\omega(\etaa)^{\ea} = \etaa$. We have an isomorphism of $\Qell[\groupA \rtimes \SaQell]$-modules
\[\bigoplus_{\etaa \in \unityroots{\da}(\Ka)}{\Muaomegaeta \tensor[\Q] \Qellbar} \iso \bigoplus_{k\in\ZnaZinv/\img\ka}{\Ind_{\groupA \rtimes \SaQellbar}^{\groupA \rtimes \SaQell}(ka \tensor \signature \tensor \reg_{\SaQellbar/\SaQellbarprime})}.\]
\end{corollary}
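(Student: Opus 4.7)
The plan is to start from \prpstn{\ref{result:Mua.Qellbar}} applied to each $\Muaomegaeta$, take the direct sum over $\etaa$, and interchange it with the induction and the direct sum over $k$. Concretely, for every $\da$-th root of unity $\etaa \in \Ka$,
\[\Muaomegaeta \tensor[\Q] \Qellbar \iso \bigoplus_{k \in \ZnaZinv/\img \ka}\Ind_{\groupA \rtimes \SaQellbar}^{\groupA \rtimes \SaQell}\bigl(ka \tensor \signature \tensor \omega(\etaa)^{u_{ka}}\bigr),\]
so since induction commutes with direct sums,
\[\bigoplus_{\etaa \in \unityroots{\da}(\Ka)}\Muaomegaeta \tensor[\Q] \Qellbar \iso \bigoplus_{k \in \ZnaZinv/\img\ka}\Ind_{\groupA \rtimes \SaQellbar}^{\groupA \rtimes \SaQell}\biggl(ka \tensor \signature \tensor \bigoplus_{\etaa \in \unityroots{\da}(\Ka)}\omega(\etaa)^{u_{ka}}\biggr).\]
It therefore suffices to show that for every fixed $k$, the inner direct sum $\bigoplus_{\etaa} \omega(\etaa)^{u_{ka}}$, viewed as a $\Qellbar[\SaQellbar]$-module, is isomorphic to the regular representation $\reg_{\SaQellbar/\SaQellbarprime}$ extended to $\SaQellbar$ through the quotient.

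The key point is the identification of $\SaQellbar/\SaQellbarprime$ with $\ZdaZ$ and the description of the characters $\sigma \mapsto \omega(\etaa)^{u_{ka}(\sigma)}$. By \rmrk{\ref{subremark:ua.ea}} and the proof of \prpstn{\ref{result:SaQellbar.semi-direct.prod}}, the homomorphism $\ua \colon \SaQellbar \to \ea \ZnaZ$ is surjective with kernel $\SaQellbarprime$, so dividing by $\ea$ gives a canonical isomorphism $\SaQellbar/\SaQellbarprime \isoto \ZdaZ$. Since $u_{ka} = k \ua$ (see \rmrk{\ref{subremark:ja:depends.on.a}}), writing $\ua(\sigma) = \ea \psi(\sigma)$ with $\psi(\sigma) \in \ZdaZ$ one gets
\[\omega(\etaa)^{u_{ka}(\sigma)} = \omega(\etaa)^{k \ea \psi(\sigma)} = (\omega(\etaa)^{\ea})^{k\psi(\sigma)} = \etaa^{k\psi(\sigma)},\]
so this character of $\SaQellbar$ factors through $\SaQellbar/\SaQellbarprime \iso \ZdaZ$ and corresponds, under this identification, to the $\da$-th root of unity $\etaa^k$.

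The main (and only real) obstacle is now to notice that as $\etaa$ runs over $\unityroots{\da}(\Ka)$, so does $\etaa^k$: indeed $k \in \ZnaZinv$, so the image of $k$ in $\ZdaZ$ is a unit (because $\da$ divides $\na$), hence $\etaa \mapsto \etaa^k$ permutes $\unityroots{\da}(\Ka)$. Consequently,
\[\bigoplus_{\etaa \in \unityroots{\da}(\Ka)}\omega(\etaa)^{u_{ka}} \iso \bigoplus_{\etaa \in \unityroots{\da}(\Ka)} (\text{character of } \SaQellbar/\SaQellbarprime \text{ attached to }\etaa) \iso \reg_{\SaQellbar/\SaQellbarprime},\]
where in the last step we use that the regular representation of the cyclic group $\SaQellbar/\SaQellbarprime \iso \ZdaZ$ is exactly the direct sum of all its $\da$ characters (over a field containing $\unityroots{\da}$). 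Substituting this back into the previous display yields the announced isomorphism.
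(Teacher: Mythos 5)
Your proposal is correct and follows essentially the same route as the paper: reduce via \prpstn{\ref{result:Mua.Qellbar}} to showing $\bigoplus_{\etaa}\omegaeta^{\uka} \iso \reg_{\SaQellbar/\SaQellbarprime}$, then use $\ua = \ea\uprimea$ with $\ker \uprimea = \SaQellbarprime$ to rewrite $\omegaeta^{\uka(\sigma)}$ as $\etaa^{k\uprimea(\sigma)}$. The paper concludes by computing the character sum (getting $\da$ on $\SaQellbarprime$ and $0$ elsewhere), whereas you observe that $\etaa \mapsto \etaa^k$ permutes $\unityroots{\da}(\Ka)$ so that all $\da$ characters of the cyclic quotient appear exactly once; these two finishing steps are trivially equivalent.
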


\begin{proof}
According to the previous proposition, we only have to check that, for all $k \in \ZnaZinv$,
\[\bigoplus_{\etaa \in \unityroots{\da}(\Ka)}{\omegaeta^{\uka}} = \reg_{\SaQellbar/\SaQellbarprime}.\]
From \rmrk{\ref{subremark:ua.ea}}, we may write $\ua = \ea \uprimea$ where $\uprimea \colon \SaQellbar \to \Z/\da\Z$ is a group homomorphism. We have $\uprimea(\sigma) = 0 \iff \ua(\sigma) = 0 \iff \sigma \in \SaQellbarprime$ as $\ja = -\fa\ua$ (\rmrk{\ref{subremark:link.ja.ua}}). Consequently, if $\sigma \in \SaQellbar$,
\begin{align*}
\sum_{\etaa \in \unityroots{\da}(\Ka)}{\omegaeta^{\uka(\sigma)}}
& = \sum_{\etaa \in \unityroots{\da}(\Ka)}{\omegaeta^{k\ua(\sigma)}} = \sum_{\etaa \in \unityroots{\da}(\Ka)}{\eta^{k\uprimea(\sigma)}} \\
& = \begin{cases} \da & \text{if $\sigma \in \SaQellbarprime$,} \\ 0 & \text{otherwise,}\end{cases}
\end{align*}
which proves the announced result.
\end{proof}

\begin{corollary}\label{result:decomp.cohom.Qell}
We keep the notations of the previous corollary. We have an isomorphism of $\Qell[\groupG]$-modules
\[\HetladQellprim{n-2}{\dworkhypersurfaceFqbar} \iso \bigoplus_{a \in \ZnZinv \times \Sn \backslash \cargroupA}{\mprimea \Ind_{\groupA \rtimes \SaQell}^{\groupG}\biggl(\mathinner{}\bigoplus_{\etaa \in \unityroots{\da}(\Ka)}{\Muaomegaeta}\biggr)} \tensor[\Q] \Qell.\]
\end{corollary}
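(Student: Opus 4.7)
The plan is to derive the claimed $\Qell[\groupG]$-module isomorphism by starting from the $\Qellbar[\groupG]$-decomposition $\eqref{formule:decomposition.Het.QellbarG}$ established in \sctn{\ref{section:Qellbar[G]}}, regrouping the summands according to the finer partition by $\ZnZinv \times \Sn \backslash \cargroupA$, inducing in stages, recognizing the inner sum via \crllr{\ref{result:decomp:Ma.Qellbar}}, and finally descending from $\Qellbar$ to $\Qell$.

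First, I would fix, for each orbit $\overline{a} \in \ZnZinv \times \Sn \backslash \cargroupA$, a representative $a$ and use the fact (see \subsctn{\ref{subsection:structure.SaQell}}) that the $\Sn$-orbits contained in the $\ZnZinv \times \Sn$-orbit of $a$ are indexed by $\ZnaZinv / \img\ka$, via $k \mapsto ka$. Since $\SaQellbar$, $\SaQellbarprime$, and $\mprimea$ depend only on $\classZnZinv{a}$ (\rmrks{\ref{remark:da.nprimea:indt.a}}\textit{c}), formula $\eqref{formule:decomposition.Het.QellbarG}$ can be rewritten as
\[
\HetladQellbarprim{n-2}{\dworkhypersurfaceFqbar} \iso \bigoplus_{a \in \ZnZinv \times \Sn \backslash \cargroupA} \mprimea \bigoplus_{k \in \ZnaZinv/\img\ka}{\Ind_{\groupA \rtimes \SaQellbar}^{\groupG}(ka \tensor \signature \tensor \reg_{\SaQellbar/\SaQellbarprime})}.
\]

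Next, I would apply transitivity of induction through the intermediate subgroup $\groupA \rtimes \SaQell$, so that $\Ind_{\groupA \rtimes \SaQellbar}^{\groupG} = \Ind_{\groupA \rtimes \SaQell}^{\groupG} \circ \Ind_{\groupA \rtimes \SaQellbar}^{\groupA \rtimes \SaQell}$, and move the outer induction outside the sum over $k$. The inner expression $\bigoplus_{k \in \ZnaZinv/\img\ka} \Ind_{\groupA \rtimes \SaQellbar}^{\groupA \rtimes \SaQell}(ka \tensor \signature \tensor \reg_{\SaQellbar/\SaQellbarprime})$ is precisely the right-hand side of \crllr{\ref{result:decomp:Ma.Qellbar}}, so it is isomorphic to $\bigoplus_{\etaa \in \unityroots{\da}(\Ka)}{\Muaomegaeta \tensor[\Q] \Qellbar}$ as a $\Qellbar[\groupA \rtimes \SaQell]$-module. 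Substituting and pulling the extension of scalars through the induction and the direct sums produces
\[
\HetladQellbarprim{n-2}{\dworkhypersurfaceFqbar} \iso \biggl(\bigoplus_{a \in \ZnZinv \times \Sn \backslash \cargroupA} \mprimea \Ind_{\groupA \rtimes \SaQell}^{\groupG}\biggl(\mathinner{}\bigoplus_{\etaa \in \unityroots{\da}(\Ka)}{\Muaomegaeta}\biggr)\biggr) \tensor[\Q] \Qellbar.
\]

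Finally, I would descend from $\Qellbar$ to $\Qell$. Both sides of the desired isomorphism are already $\Qell[\groupG]$-modules, and we have just checked that they become isomorphic after extension of scalars from $\Qell$ to $\Qellbar$; the standard Noether\namedash Deuring argument (see \cite[\thrm{29.7}]{Curtis.Reiner}, as invoked in the proof of \crllr{\ref{result:Mua.iso.Muaprime}}) then guarantees that they are already isomorphic as $\Qell[\groupG]$-modules. The main bookkeeping obstacle is the regrouping step: one must check carefully that the outer direct sum over $\Sn\backslash\cargroupA$ in $\eqref{formule:decomposition.Het.QellbarG}$ decomposes, orbit by orbit under $\ZnZinv$, into exactly the summands produced by \crllr{\ref{result:decomp:Ma.Qellbar}}, using that both $\SaQellbar$ and $\mprimea$ are $\ZnZinv$-invariant whereas the characters $a$ themselves are permuted by the $\ZnaZinv/\img\ka$-action.
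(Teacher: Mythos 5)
Your proposal is correct and follows essentially the same route as the paper: the paper's own proof likewise combines Formula \eqref{formule:decomposition.Het.QellbarG} with \crllr{\ref{result:decomp:Ma.Qellbar}} to get the statement over $\Qellbar$, then descends to $\Qell$ by the Noether\namedash Deuring argument of \cite[\thrm{29.7}]{Curtis.Reiner}. You merely spell out the regrouping of $\Sn$-orbits into $\ZnZinv\times\Sn$-orbits via $\ZnaZinv/\img\ka$ and the transitivity of induction, which the paper leaves implicit.
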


\begin{proof}
As a consequence of the previous corollary and of the results of \subsctn{\ref{subsection:Qellbar[G]:action.Sigmaa}}, we have
\[\HetladQellbarprim{n-2}{\dworkhypersurfaceFqbar} \iso \bigoplus_{a \in \ZnZinv \times \Sn \backslash \cargroupA}{\mprimea \Ind_{\groupA \rtimes \SaQell}^{\groupG}\biggl(\mathinner{}\bigoplus_{\etaa \in \unityroots{\da}(\Ka)}{\Muaomegaeta}\biggr)} \tensor[\Q] \Qellbar.\]
We deduce the announced result over $\Qell$ thanks to the same argument as in \crllr{\ref{result:Mua.iso.Muaprime}}: two representations isomorphic after extension of scalars are also isomorphic before.
\end{proof}

\subsection{Endomorphism rings of the representations}%
\label{subsection:computation.end.ring}

Denote by $\Waomega$\label{definition:Wa} the $\Q[\groupG]$-module $\Ind_{\groupA \rtimes \SaQell}^{\groupG}{\Mua}$; the aim of this~\subsctn{\ref{subsection:computation.end.ring}} is to show that it is a simple module and identify its endomorphism ring.

\begin{theorem}
The $\Q[\groupG]$-module $\Waomega$ is simple. Moreover, if we identify the group $\Gal(\Ka/\Q)$ with $\ZnaZinv$, the endomorphism ring of $\Waomega$ identifies with the unique subfield $\Da$\label{definition:Da} of $\Ka$ such that $\Gal(\Ka/\Da) = \img\ka$. That is to say, $\Da$ is the subfield of $\Ka$ consisting of the elements fixed by all the $\theta_{\vsigma}$ for $\sigma \in \SaQell$. In particular, $\Da$ is commutative.
\end{theorem}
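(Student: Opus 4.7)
The plan is to compare $\End_{\Q[\groupG]}(\Waomega)$ with its scalar extension to $\Qellbar$, and to construct an explicit embedding $\Da \into \End_{\Q[\groupG]}(\Waomega)$ via left multiplication on $\Mua = \Ka$. Combining \prpstn{\ref{result:Mua.Qellbar}} with transitivity of induction, I first obtain
\[\Waomega \tensor[\Q] \Qellbar \iso \bigoplus_{k \in \ZnaZinv/\img\ka}{\Ind_{\groupA \rtimes \SaQellbar}^{\groupG}(ka \tensor \signature \tensor \omega^{u_{ka}})}.\]
The core intermediate claim is that the right-hand side is a sum of $r := [\ZnaZinv : \img\ka]$ pairwise non-isomorphic simple $\Qellbar[\groupG]$-modules.

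For irreducibility of each summand I apply Mackey's criterion: if $g = b\tau \in \groupG \setminus (\groupA \rtimes \SaQellbar)$ with $\tau \in \Sn$, then $\tau \notin \SaQellbar$ (else $g$ already lies in $\groupA \rtimes \SaQellbar$), so the restrictions to $\groupA$ of the character and of its $g$-conjugate are $ka$ and $k({}^{\tau}a)$ respectively, which differ; this suffices for Mackey's disjointness condition on the intersection. For non-isomorphism, restricting the $k$-th induced summand to $\groupA$ yields the $\Sn$-orbit of $ka$ in $\cargroupA$ with multiplicity one, and two such orbits coincide iff $k' k^{-1} \in \img\ka$, i.e.\ iff the cosets in $\ZnaZinv/\img\ka$ coincide. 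Combined with Schur's lemma and the standard base-change identity $\End_{\Q[\groupG]}(\Waomega) \tensor[\Q] \Qellbar \iso \End_{\Qellbar[\groupG]}(\Waomega \tensor[\Q] \Qellbar)$, this yields $\dim_\Q \End_{\Q[\groupG]}(\Waomega) = r$; and on the other hand $[\Da : \Q] = \eulerphi(\na)/\abs{\img\ka} = r$.

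It remains to exhibit $\Da \into \End_{\Q[\groupG]}(\Waomega)$. For $d \in \Ka$, let $L_d$ denote left multiplication by $d$ on $\Mua = \Ka$. A direct computation from the defining formula of $\mua$ gives $\mua(\zeta,\sigma) \circ L_d = L_{\theta_{\vsigma}(d)} \circ \mua(\zeta,\sigma)$, so $L_d$ commutes with the $\groupA \rtimes \SaQell$-action iff $\theta_{\vsigma}(d) = d$ for every $\sigma \in \SaQell$, iff $d \in \Ka^{\img\ka} = \Da$. Inducing this to $\groupG$ produces the desired injection, and by the preceding dimension count this injection is an isomorphism.

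Finally, simplicity of $\Waomega$ drops out for free: $\Q[\groupG]$ is semisimple by Maschke's theorem, so any nontrivial decomposition of $\Waomega$ would yield a nontrivial idempotent in $\End_{\Q[\groupG]}(\Waomega) = \Da$, which is impossible since $\Da$ is a subfield of $\Ka$ and hence a field. I expect the main technical point to be the Mackey-irreducibility step together with the bookkeeping identifying cosets of $\ZnaZinv/\img\ka$ with distinct $\Sn$-orbits in $\cargroupA$; once this is in place, the rest is a dimension count together with the explicit commutant calculation on $\Ka$.
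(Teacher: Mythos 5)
Your proof is correct, but it takes a genuinely different route from the paper's. The paper stays entirely over $\Q$: writing $\Waomega = \bigoplus_{s\in\Sn/\SaQell}{s\Mua}$, it observes that the $\Q[\groupA]$-modules $s\Mua$ are pairwise disjoint, so every $\Q[\groupG]$-endomorphism stabilizes $\Mua$ and restriction gives an isomorphism $\End_{\Q[\groupG]}(\Waomega) \iso \End_{\Q[\groupA\rtimes\SaQell]}(\Mua)$; the latter is then computed as the commutant of the $\theta_{\vsigma}$ in $\End_{\Q[\groupA]}(\Mua) = \Ka$ \namedash{} exactly your $L_d$ computation \namedash{} and simplicity follows from the fact that a module over a semisimple algebra is simple iff its endomorphism ring is a division ring. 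You instead extend scalars to $\Qellbar$, apply Mackey's irreducibility criterion and the orbit bookkeeping to show that $\Waomega \tensor[\Q] \Qellbar$ is a multiplicity-free sum of $r = [\ZnaZinv : \img\ka]$ pairwise non-isomorphic irreducibles, deduce $\dim_\Q \End_{\Q[\groupG]}(\Waomega) = r$ by base change of Hom, and conclude by matching dimensions against the explicitly embedded copy of $\Da$. Both arguments hinge on the same commutant identity $\mua(\zeta,\sigma)\circ L_d = L_{\theta_{\vsigma}(d)}\circ\mua(\zeta,\sigma)$; yours is somewhat longer and leans on \prpstn{\ref{result:Mua.Qellbar}}, but as a by-product it establishes that $\Waomega \tensor[\Q] \Qellbar$ is multiplicity-free (information the paper's proof does not directly provide), whereas the paper's restriction-to-$\Mua$ argument identifies the endomorphism ring in a single step without ever leaving $\Q$.
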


\begin{proof}
Since a $\Q[\groupG]$-module is simple if and only if its endomorphism ring is a division ring, we only need to show the second assertion.

We have $\Waomega = \Ind_{\groupA \rtimes \SaQell}^{\groupG}{\Mua}$ where $\Mua$ is just $\Ka$ with the structure of $\Q[\groupA \rtimes \SaQell]$-module given by the representation $\mua$. We may write $\Waomega = \bigoplus_{s \in \Sn/\SaQell}{s\Mua}$. From the definition of $\SaQell$, each $s\Mua$ is stable by $\groupA$ and the $\Q[\groupA]$-modules $s\Mua$ are disjoint. Consequently, the endomorphism ring of $\Waomega$ stabilizes $\Mua$ and $u \mapsto u|_{\Mua}$ defines an isomorphism between the endomorphism ring of $\Waomega$ and the endomorphism ring of the $\Q[\groupA \rtimes \SaQell]$-module $\Mua$.

We now need to show that the endomorphism ring of the $\Q[\groupA \rtimes \SaQell]$-module $\Mua$ is the subfield of $\Ka$ fixed by all the $\theta_{\vsigma}$ for $\sigma \in \SaQell$. The endomorphism ring of the $\Q[\groupA]$-module $\Mua$ is canonically isomorphic to $\Ka$ via $x \mapsto (\lambda \mapsto x\lambda)$ since the $\Q[\groupA]$-module $\Mua$ is $\Ka$. We deduce that the endomorphism ring of the $\Q[\groupA \rtimes \SaQell]$-module $\Mua$ is the subfield of $\Ka$ consisting of the elements $x$ such that $\lambda \mapsto x \lambda$ commutes with each $\mua(\zeta,\sigma)$ i.e. with each $\theta_{\vsigma}$. Because $\lambda \mapsto x\lambda$ commutes with $\theta_{\vsigma}$ if and only if $\theta_{\vsigma}(x) = x$, the ring $\Da = \End_{\Q[\groupG]}(\Waomega,\Waomega)$ is the subfield of $\Ka$ fixed by each $\theta_{\vsigma}$ for $\sigma \in \SaQell$.
\end{proof}

\begin{remarks}\label{remarque:Da.ind.ell}
\begin{subremarks}
    \item The field $\Da$ is independent of the choice of $\omega$.
    \item The field $\Da$ has dimension $\frac{\eulerphi(\na)}{\card{\img\ka}}$ over $\Q$. When $\ZnaZinv$ is cyclic (e.g. when $n$ is prime and $\na = n$), this dimension characterizes $\Da$.
    \item As $\ZeaZinv \subset \img\ka$, we have $\Da \subset \Kprimea$ where $\Kprimea$ is the subfield of $\Ka$ generated by the \nth{$\ea$} roots of unity. In general, $\Da \neq \Kprimea$ as we may see by taking $n = 5$ and $a = [0,0,1,1,3]$: we have $\na = \ea = 5$ and so $\Ka = \Kprimea = \Q(\unityroots{5})$ whereas $\Da = \Q(\sqrt{5})$ (this is the same example as in the footnote to \pg{\pageref{footnote:pas.surj}}).
\end{subremarks}
\end{remarks}

\begin{examples}\label{exemple:calcul.Da}
\begin{subexamples}
    \item\label{sous-exemple:calcul.Da:a=0} When $a = [0,\dots,0]$, we have $\Da = \Ka = \Q$.
    \item\label{sous-exemple:calcul.Da:n=5} When $n = 5$ and $\classZnZinv{a}$ is the class of $[0,0,0,1,4]$ or $[0,0,1,1,3]$, we have $\Da = \Q(\sqrt{5})$.
    \item\label{sous-exemple:calcul.Da:n=7} When $n=7$, we have the following possibilities concerning $\Da$.
\begin{center}\begin{tabular}{cc}
class of $\classZnZinv{a}$ & $\Da$ \\
\noalign{\vspace{1pt}}
\hline
\noalign{\vspace{2pt}}
$[0,0,0,0,0,0,0]$, $[0,1,2,3,4,5,6]$ & $\Q$ \\
\noalign{\vspace{2pt}}
\hline
\noalign{\vspace{2pt}}
$[0,0,0,0,1,2,4]$, $[0,0,1,1,3,3,6]$ & $\Q(\sqrt{-7})$ \\
\noalign{\vspace{2pt}}
\hline
\noalign{\vspace{2pt}}
$[0,0,0,0,0,1,6]$, $[0,0,0,1,1,1,4]$ \\
$[0,0,0,1,1,6,6]$, $[0,0,0,1,2,5,6]$ & $\Q(\unityroots{7})^+$ \\
$[0,0,1,1,3,4,5]$, $[0,0,1,1,2,4,6]$ \\
\noalign{\vspace{2pt}}
\hline
\noalign{\vspace{2pt}}
$[0,0,0,0,1,1,5]$, $[0,0,0,1,1,2,3]$ & $\Q(\unityroots{7})$ \\
\end{tabular}\end{center}
\end{subexamples}
\end{examples}

\begin{theorem}
We have
\[\Waomega \iso \Waomegaprime \iff a \in (\ZnZinv \times \Sn) a' \hskip0.5em \text{and} \hskip0.7em \omega^{\ea} = \omega '^{\ea}.\]
\end{theorem}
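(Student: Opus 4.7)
The plan is to prove both implications by reducing to Corollary \ref{result:Mua.iso.Muaprime}, using a refined analysis over $\Qellbar$ via Proposition \ref{result:Mua.Qellbar}. For the ``if'' direction, suppose $a' = k\sigma a$ with $k \in \ZnZinv$ and $\sigma \in \Sn$, and $\omega^{\ea} = \omega'^{\ea}$; note that $e_{a'} = \ea$, since this invariant is preserved by the $(\ZnZinv \times \Sn)$-action. Conjugation by $\sigma$ carries $A \rtimes \SaQell$ to $A \rtimes S_{\overline{\sigma a}}$ and sends $\Mua$ to a $\Q[A \rtimes S_{\overline{\sigma a}}]$-module naturally isomorphic to $M_{\sigma a, \omega}$; since inducing from conjugate subgroups with conjugate modules produces isomorphic $G$-modules, $\Waomega \cong \Ind_{A \rtimes S_{\overline{\sigma a}}}^G M_{\sigma a, \omega}$. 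Because $a'$ and $\sigma a$ lie in the same $\ZnZinv$-orbit and $\omega'^{e_{a'}} = \omega^{\ea}$, Corollary \ref{result:Mua.iso.Muaprime} gives $M_{\sigma a, \omega} \cong M_{a', \omega'}$, and inducing yields $\Waomega \cong \Waomegaprime$.

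For the ``only if'' direction, suppose $\Waomega \cong \Waomegaprime$. Since $A$ is normal in $G$, Mackey's formula gives
\[\Waomega|_A \;\cong\; \bigoplus_{s \in \Sn / \SaQell} {}^{s}(\Mua|_A),\]
and by \eqref{eqn:Ma.tenseur.Qlbar} the $A$-character support of $\Waomega|_A \otimes \Qellbar$ is exactly the $(\Sn \times \ZnZinv)$-orbit of $a$. The same holds for $\Waomegaprime$; equating these supports forces $a' \in (\Sn \times \ZnZinv) a$, so by the ``if'' direction we may reduce to the case $a = a'$.

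It remains to show that $\omega^{\ea} = \omega'^{\ea}$. Extending scalars to $\Qellbar$ and inducing the decomposition of Proposition \ref{result:Mua.Qellbar} up to $G$ gives
\[\Waomega \otimes \Qellbar \;\cong\; \bigoplus_{k \in \ZnaZinv / \img \ka} \Ind_{A \rtimes \SaQellbar}^{G}\bigl(ka \otimes \signature \otimes \omega^{u_{ka}}\bigr),\]
and similarly for $\Waomegaprime$. Each summand is a simple $\Qellbar[G]$-module, and distinct cosets $k \img \ka$ yield non-isomorphic summands: their $A$-character supports are distinct $\Sn$-orbits, because any $\tau \in \Sn$ sending $k_1 a$ into $\ZnZinv \cdot k_2 a$ must lie in $\SaQell$, forcing $k_2 / k_1 \in \img \ka$. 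Matching summands of the two decompositions then forces $\omega^{u_{ka}} = \omega'^{u_{ka}}$ as characters of $\SaQellbar$ for every $k$; taking $k = 1$ and using that $u_a \colon \SaQellbar \twoheadrightarrow \ea \ZnaZ$ is surjective yields $\omega^{\ea} = \omega'^{\ea}$. The main obstacle is verifying that the summands above are pairwise non-isomorphic simple modules, so that the matching of components is well-defined; this relies on the explicit description from Proposition \ref{result:Mua.Qellbar} combined with the orbit computation separating the cosets of $\img \ka$ inside $\ZnaZinv$ under the $\Sn$-action.
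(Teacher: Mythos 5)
Your proof is correct and follows essentially the same route as the paper: reduce everything to the extension of scalars, read off the $(\ZnZinv\times\Sn)$-orbit of $a$ from the restriction to $\groupA$, and then use \prpstn{\ref{result:Mua.Qellbar}} together with \crllr{\ref{result:Mua.iso.Muaprime}} and the surjectivity of $\ua$ onto $\ea\ZnaZ$ to pin down $\omega^{\ea}$. You merely spell out in more detail (via simplicity and pairwise non-isomorphism of the induced constituents) a matching step that the paper handles by observing that $\groupA\rtimes\SaQell$ stabilizes $\Mua$ inside $\Waomega$.
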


\begin{proof}
As two representations isomorphic after extension of scalars are also isomorphic before (see \cite[\thrm{29.7}, \pg{200}]{Curtis.Reiner}), we only need to show the result for $\Waomega \tensor[\Q] \Qellbar$. Following \prpstn{\ref{result:Mua.Qellbar}}, we have
\[\Waomega \tensor[\Q] \Qellbar = \bigoplus_{s\in\Sn/\SaQell}{s\Mua\tensor\Qellbar} \iso \bigoplus_{s\in\Sn/\SaQell}{s\mathinner{}\mathclose{}\biggl(\mathinner{}\bigoplus_{k \in \ZnaZinv}{(ka \tensor \signature \tensor \omega^{u_{ka}})}\biggr)}.\]

If $a$ and $a'$ are the same mod the action of $\ZnZinv \times \Sn$, this formula shows that $\Waomega \tensor \Qellbar$ and $\Waomegaprime \tensor \Qellbar$ are not isomorphic.

If $a \in (\ZnZinv \times \Sn) a'$, as the group $\groupA \rtimes \SaQell$ stabilizes each copy of $s\Mua$ an thus stabilizes $\Mua$, we deduce, thanks to \crllr{\ref{result:Mua.iso.Muaprime}}, that if $\omega^{\ea} \neq \omega '^{\ea}$, then $\Waomega \tensor \Qellbar$ and $\Waomegaprime \tensor \Qellbar$ are not isomorphic.

Finally, if $a \in (\ZnZinv \times \Sn) a'$ and $\omega^{\ea} = \omega '^{\ea}$, then the previous formula shows that $\Waomega \tensor \Qellbar \iso \Waomegaprime \tensor \Qellbar$.
\end{proof}

\section{Consequence for the factorization of the zeta function}\label{section:fact.zeta}

The aim of this \sctn{\ref{section:fact.zeta}} is to show that $\HetladQellprim{n-2}{\dworkhypersurfaceFqbar}$ is a direct sum of subspaces stable by the Frobenius and to deduce a factorization of the zeta function of $\dworkhypersurface$. The idea of using this method comes from \cite[\sctn{6.2}]{HKS}.

The subspaces we consider are the isotypic components of the $\Q[\groupG]$-module $\HetladQellprim{n-2}{\dworkhypersurfaceFqbar}$; after describing them in \subsctn{\ref{subsection:decomp.isotypique}}, we study in \subsctn{\ref{subsection:action.frobenius.composant.iso}} how the Frobenius acts on them and deduce that the characteristic polynomial of the restriction of the Frobenius is an integer power $\Qa^{\gammaa/\da}$ of a polynomial $\Qa$ which has integer coefficients independent of $\ell$ (see \subsctn{\ref{subsection:independance.ell}}). Finally, in \subsctn{\ref{subsection:decomp.fact.zeta.ext.finies}}, we deduce that the part of the zeta function of $\dworkhypersurface$ corresponding to $\HetladQellprim{n-2}{\dworkhypersurfaceFqbar}$ is the product over $a \in \cargroupA$ and $\etaa \in \unityroots{\da}(\Ka)$ of the polynomials $\Qaomegaeta^{\gammaa/\da}$ (see \crllr{\ref{result:decomp:Ma.Qellbar}} for the definition of $\omegaeta$) and we show that each $\Qaomegaeta$ factors over the field $\Da$ considered in \subsctn{\ref{subsection:computation.end.ring}}. We end by explicitly treating the cases $n=3$, $4$, $5$, and $7$ in \subsctn{\ref{subsection:exemples}}.

\subsection{Isotypic decomposition of the \texorpdfstring{$\Qell[\groupG]$}{Ql[G]}-module \texorpdfstring{$\HetladQellprim{n-2}{\dworkhypersurfaceFqbar}$}{Hn-2(X,Ql)prim}}\label{subsection:decomp.isotypique}

The aim of this~\subsctn{\ref{subsection:decomp.isotypique}} is to express, in terms of the representations $\Waomega$ considered above, the isotypic components of the $\Q[\groupG]$-module $\HetladQellprim{n-2}{\dworkhypersurfaceFqbar}$. We keep the notations of \subsctn{\ref{subsection:computation.end.ring}}.

\begin{proposition}
Let $\omega$ be a \nth{$\na$} root of unity. The $\Da \tensor[\Q] \Qell$-module $\Vaomega = \Hom_{\Q[\groupG]}(\Waomega,\HetladQellprim{n-2}{\dworkhypersurfaceFqbar})$\label{definition:Va} is free of rank $\mprimea$.
\end{proposition}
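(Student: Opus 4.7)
The plan is to compute $\Vaomega$ directly from \crllr{\ref{result:decomp.cohom.Qell}} together with Schur's lemma, exploiting that $\Waomega$ is a simple $\Q[\groupG]$-module with endomorphism field $\Da$.

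First, because $\Waomega = \Ind_{\groupA \rtimes \SaQell}^{\groupG}{\Mua}$ and induction commutes with direct sums, I rewrite the isomorphism of $\Qell[\groupG]$-modules of \crllr{\ref{result:decomp.cohom.Qell}} as
\[\HetladQellprim{n-2}{\dworkhypersurfaceFqbar} \iso \bigoplus_{a' \in \ZnZinv \times \Sn \backslash \cargroupA}{\mprimea[a'] \bigoplus_{\etaa' \in \unityroots{\da[a']}(\Ka[a'])}{\Waex{a'}{\omega(\etaa')} \tensor[\Q] \Qell}}.\]

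Next, I apply $\Hom_{\Q[\groupG]}(\Waomega, \cdot)$. Since $\Waomega$ is finite-dimensional over $\Q$, Hom commutes with the scalar extension $\cdot \tensor[\Q] \Qell$, hence
\[\Vaomega \iso \bigoplus_{a'}{\mprimea[a'] \bigoplus_{\etaa'}{\Hom_{\Q[\groupG]}(\Waomega, \Waex{a'}{\omega(\etaa')}) \tensor[\Q] \Qell}}.\]

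Then, by Schur's lemma, each summand $\Hom_{\Q[\groupG]}(\Waomega, \Waex{a'}{\omega(\etaa')})$ is either $\Da$ (when $\Waomega \iso \Waex{a'}{\omega(\etaa')}$) or zero. The preceding theorem tells us that the isomorphism holds precisely when $a' = a$ in $\ZnZinv \times \Sn \backslash \cargroupA$ and $\omega(\etaa')^{\ea} = \omega^{\ea}$, which via the defining property $\omega(\etaa')^{\ea} = \etaa'$ pins down the unique element $\etaa' = \omega^{\ea} \in \unityroots{\da}(\Ka)$. Collecting the surviving contribution yields
\[\Vaomega \iso \mprimea \cdot (\Da \tensor[\Q] \Qell),\]
so $\Vaomega$ is free of rank $\mprimea$ over $\Da \tensor[\Q] \Qell$.

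The only step deserving real attention is the commutation of Hom with scalar extension, valid because $\Waomega$ has finite $\Q$-dimension; beyond that, the argument is a direct application of the isotypic decomposition and Schur's lemma, and I foresee no substantive obstacle.
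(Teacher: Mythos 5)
Your argument is correct and follows essentially the same route as the paper: start from the decomposition of \crllr{\ref{result:decomp.cohom.Qell}}, apply $\Hom_{\Q[\groupG]}(\Waomega,\cdot)$, commute Hom with the extension of scalars $\tensor[\Q]\Qell$, and use Schur's lemma together with the isomorphism criterion for the $\Waomega$'s to see that only the summand with $a'=a$ and $\etaa'=\omega^{\ea}$ survives, giving $(\Da\tensor[\Q]\Qell)^{\mprimea}$. Your explicit justification of the Hom/scalar-extension commutation is a point the paper leaves implicit, but there is no substantive difference.
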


\begin{proof}
By \crllr{\ref{result:decomp.cohom.Qell}}, we have
\[\HetladQellprim{n-2}{\dworkhypersurfaceFqbar} \iso \bigoplus_{a \in \ZnZinv \times \Sn \backslash \cargroupA}{\biggl(\mathinner{}\bigoplus_{\etaa \in \unityroots{\da}(\Ka)}{\Waomegaeta^{\mprimea}} \tensor[\Q] \Qell\biggr)}.\]
We deduce the following isomorphisms of $\Da \tensor[\Q] \Qell$-modules:
\begin{align*}
\Vaomega
& = \Hom_{\Q[\groupG]}(\Waomega,\HetladQellprim{n-2}{\dworkhypersurfaceFqbar}) \\
& \iso \bigoplus_{a' \in \ZnZinv\times\Sn \backslash \cargroupA} {\biggl(\mathinner{}\bigoplus_{\etaa ' \in \unityroots{\da}(\Ka)} {\Hom_{\Q[\groupG]}(\Waomega,\Waprimeomegaetaprime^{\mprimeaprime} \tensor[\Q] \Qell)}\biggr)} \\
& \iso \Hom_{\Q[\groupG]}(\Waomega,\Waomega^{\mprimea} \tensor[\Q] \Qell) \\
& \iso (\End_{\Q[\groupG]}(\Waomega) \tensor[\Q] \Qell)^{\mprimea} \\
& \iso (\Da \tensor[\Q] \Qell)^{\mprimea}.
\end{align*}
This shows that $\Vaomega$ is a free $\Da \tensor[\Q] \Qell$-module of rank $\mprimea$.
\end{proof}

\begin{corollary}\label{result:iso.components.Q}
The map $w \tensor v \mapsto v(w)$ of $\Waomega \tensor[\Da] \Vaomega$ into $\HetladQellprim{n-2}{\dworkhypersurfaceFqbar}$ is $\Qell[\groupG]$-linear and injective; its image is the $\Waomega$-isotypic component $\isotypiccomponentQell{a}$\label{definition:Ha} of the $\Q[\groupG]$-module $\HetladQellprim{n-2}{\dworkhypersurfaceFqbar}$.
\end{corollary}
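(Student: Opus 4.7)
The plan is to verify the three claims (well-definedness with $\Qell[\groupG]$-linearity, identification of the image with $\isotypiccomponentQell{a}$, and injectivity) using the classical isotypic decomposition for semisimple modules over the semisimple algebra $\Q[\groupG]$ (semisimple by Maschke). The two inputs are those established in \subsctn{\ref{subsection:computation.end.ring}}: $\Waomega$ is simple over $\Q[\groupG]$ with endomorphism ring $\Da^{\textup{opp}}$, and $\Vaomega$ is free of rank $\mprimea$ over $\Da \tensor[\Q] \Qell$ by the preceding proposition.

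Writing the right action of $d \in \Da$ on $\Waomega$ as $w\cdot d = d(w)$ and the left action on $\Vaomega$ as $d\cdot v = v\circ d$, the identity $v(d(w)) = (v\circ d)(w)$ shows that the map $w \tensor v \mapsto v(w)$ factors through $\tensor[\Da]$. Its $\Qell[\groupG]$-linearity is immediate: $\Qell$ acts only on $\Vaomega$, $\groupG$ acts trivially on $\Vaomega$, and $v(g\cdot w) = g\cdot v(w)$ expresses the $\groupG$-equivariance of $v$. For the image, I would combine the preceding proposition with \crllr{\ref{result:decomp.cohom.Qell}}: the latter exhibits $\HetladQellprim{n-2}{\dworkhypersurfaceFqbar}$ as a direct sum of modules of the form $\Waprimeomegaetaprime \tensor[\Q] \Qell$, which, by the last theorem of \subsctn{\ref{subsection:computation.end.ring}}, are pairwise non-isomorphic as $\Q[\groupG]$-modules for the different classes $(\classZnZinvwide{a'},\omega '^{\eaprime})$. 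Hence the $\Waomega$-isotypic component $\isotypiccomponentQell{a}$ is precisely $\Waomega^{\mprimea} \tensor[\Q] \Qell$. Combined with $\Vaomega \iso (\Da \tensor[\Q] \Qell)^{\mprimea}$, one obtains a chain of canonical isomorphisms
\[\Waomega \tensor[\Da] \Vaomega \iso \Waomega \tensor[\Da] (\Da \tensor[\Q] \Qell)^{\mprimea} \iso (\Waomega \tensor[\Q] \Qell)^{\mprimea} \iso \isotypiccomponentQell{a},\]
and injectivity follows from the equality of finite $\Qell$-dimensions on both sides.

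The delicate step is checking that this composite of abstract isomorphisms really coincides with the evaluation map: concretely, that for any $\Da \tensor[\Q] \Qell$-basis $v_1,\dots,v_{\mprimea}$ of $\Vaomega$, the evaluations $w \mapsto v_i(w)$ embed $\mprimea$ disjoint copies of $\Waomega \tensor[\Q] \Qell$ whose internal direct sum covers $\isotypiccomponentQell{a}$. This follows from Schur's lemma applied to the simple $\Q[\groupG]$-module $\Waomega$: each nonzero $v_i$ is automatically injective, and if two images had a nonzero common submodule, the composite would produce a $\Q[\groupG]$-endomorphism of $\Waomega$ (hence a scalar in $\Da$) exhibiting a $\Da$-linear relation between the $v_i$, contradicting their $\Da \tensor[\Q] \Qell$-linear independence.
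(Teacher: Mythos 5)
Your argument is correct and amounts to the same thing as the paper's proof, which simply cites the general Bourbaki result on isotypic components of semisimple modules (\cite[\subsctn{3.4}, \prpstn{9}]{Bourbaki.algebre.viii} and \cite[\subsctn{1.5}, \thrm{1.b}]{Bourbaki.algebre.viii}); you have unpacked that citation, correctly feeding in the two inputs the paper intends (simplicity of $\Waomega$ with endomorphism ring $\Da$, and freeness of $\Vaomega$ over $\Da \tensor[\Q] \Qell$) together with \crllr{\ref{result:decomp.cohom.Qell}} and the non-isomorphism criterion for the $\Waomega$'s. The only stylistic difference is that your final Schur-lemma verification could be streamlined by noting directly that the image of the evaluation map is contained in $\isotypiccomponentQell{a}$ and contains every submodule isomorphic to $\Waomega$, after which the $\Qell$-dimension count you already set up gives injectivity.
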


\begin{proof}
We refer the reader to \cite[\subsctn{3.4}, \prpstn{9}, \pg{33}]{Bourbaki.algebre.viii} and \cite[\subsctn{1.5}, \thrm{1.b}, \pg{15}]{Bourbaki.algebre.viii}.
\end{proof}

\begin{remark}
The link between the $\isotypiccomponentQellbar{\alpha}$ from \subsctn{\ref{subsection:Qellbar[G]:isotypic.decomposition.Qellbar[G]}} and the isotypic components $\isotypiccomponentQell{a}$ from the previous corollary is given by
\[\bigoplus_{\etaa \in \unityroots{\da}(\Ka)}{\isotypiccomponentQell[\omegaeta]{a}} \tensor[\Qell] \Qellbar \iso \bigoplus_{\alpha \in \ZnaZinv/\img\ka}{\Ind_{\groupA \rtimes \SaQellbar}^{\groupG}{\isotypiccomponentQellbar{\alpha}}}.\]
\end{remark}

\subsection{Action of the Frobenius on each isotypic component}\label{subsection:action.frobenius.composant.iso}

\begin{lemma}
The Frobenius stablizes the $\Qell[\groupG]$-modules $\Waomega \tensor[\Da] \Vaomega$.
\end{lemma}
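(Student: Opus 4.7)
The plan is to show that $\Frob^*$ commutes with the action of $\groupG$ on $\HetladQellprim{n-2}{\dworkhypersurfaceFqbar}$, so that $\Frob^*$ is in particular a $\Q[\groupG]$-linear endomorphism of this semisimple $\Q[\groupG]$-module, and must therefore stabilize each of its isotypic components. Combined with \crllr{\ref{result:iso.components.Q}}, which identifies $\Waomega \tensor[\Da] \Vaomega$ with the $\Waomega$-isotypic component $\isotypiccomponentQell{a}$ of the $\Q[\groupG]$-module $\HetladQellprim{n-2}{\dworkhypersurfaceFqbar}$, this will give the claim.

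The key input is that the $\groupG$-action on $\dworkhypersurface$ is already defined over $\Fq$. For the symmetric group this is immediate, since permutation of the coordinates is an $\Fq$-morphism of $\varprojFq{n-1}$ stabilizing $\dworkhypersurface$. For $\groupA$, it follows from our standing assumption $q \equiv 1 \mod n$, which places $\unityroots{n}(\Fqbar) = \unityroots{n}(\Fq)$ entirely in $\Fq$; hence coordinate-wise multiplication by any $[\zeta_1,\dots,\zeta_n]$ is also an $\Fq$-morphism. Each $g \in \groupG$ therefore descends to an $\Fq$-endomorphism of $\dworkhypersurface$, and so, after base change to $\Fqbar$, commutes with the geometric Frobenius morphism. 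Passing to \'etale cohomology, the induced maps $g^*$ and $\Frobcohom$ commute on $\HetladQellprim{n-2}{\dworkhypersurfaceFqbar}$ for every $g \in \groupG$.

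Once this commutation is in hand, $\Frob^*$ is a $\Qell[\groupG]$-linear endomorphism of $\HetladQellprim{n-2}{\dworkhypersurfaceFqbar}$, and in particular a $\Q[\groupG]$-linear one. Since the underlying $\Q[\groupG]$-module is semisimple (as a consequence of \crllr{\ref{result:decomp.cohom.Qell}}), any such endomorphism respects its isotypic decomposition, and so preserves $\isotypiccomponentQell{a} \iso \Waomega \tensor[\Da] \Vaomega$ for every $(a,\omega)$. I do not foresee any genuine obstacle: all the structural work has already been done in \subsctn{\ref{subsection:decomp.isotypique}}, and what remains is simply the formal observation that Frobenius, being commutative with the $\Fq$-rational $\groupG$-action, must respect the $\groupG$-isotypic pieces. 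This stability will then be the essential ingredient for extracting, in the subsequent subsections, a factorization of the zeta function indexed by $(a,\omega)$.
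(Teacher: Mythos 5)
Your proof is correct and follows essentially the same route as the paper: since the $\groupG$-action is defined over $\Fq$ (using $q \equiv 1 \bmod n$ for $\groupA$), Frobenius commutes with it and therefore stabilizes each isotypic component, identified with $\Waomega \tensor[\Da] \Vaomega$ via \crllr{\ref{result:iso.components.Q}}. Your write-up merely spells out in more detail the rationality of the $\groupG$-action, which the paper states in one line.
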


\begin{proof}
As all the elements of $\groupG$ are automorphisms of $\dworkhypersurface$ defined over $\Fq$, the Frobenius endomorphism on $\HetladQell{n-2}{\dworkhypersurface}$ commutes with the action of $\groupG$; it thus stabilizes each isotypic components of the $\Q[\groupG]$-module $\HetladQellprim{n-2}{\dworkhypersurfaceFqbar}$, namely, each of the $\Waomega \tensor[\Da] \Vaomega$ (\crllr{\ref{result:iso.components.Q}}).
\end{proof}

\begin{proposition}
The Frobenius acts on $\Waomega \tensor[\Da] \Vaomega$ by $\Id \tensor \va$ where $\va$\label{definition:va} is the endomorphism $v \mapsto \Frobcohom \circ v$ of the $\Da \tensor[\Q] \Qell$-module $\Vaomega$.
\end{proposition}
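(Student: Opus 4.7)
The plan is to verify the claim by directly chasing the isomorphism \(\Waomega \tensor[\Da] \Vaomega \isoto \isotypiccomponentQell{a}\) of \crllr{\ref{result:iso.components.Q}}, i.e., the map $w \tensor v \mapsto v(w)$. Everything should fall out from the fact, established in the preceding lemma, that $\Frobcohom$ commutes with the $\groupG$-action on $\HetladQellprim{n-2}{\dworkhypersurfaceFqbar}$.

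First I would check that $\va \colon v \mapsto \Frobcohom \circ v$ actually defines an endomorphism of the $\Da \tensor[\Q] \Qell$-module $\Vaomega$. For well-definedness, if $v \in \Vaomega = \Hom_{\Q[\groupG]}(\Waomega,\HetladQellprim{n-2}{\dworkhypersurfaceFqbar})$, then for every $g \in \groupG$ and $w \in \Waomega$ we have $(\Frobcohom \circ v)(gw) = \Frobcohom(g \cdot v(w)) = g \cdot \Frobcohom(v(w)) = g \cdot (\Frobcohom \circ v)(w)$, since $\Frobcohom$ commutes with $g^*$; hence $\Frobcohom \circ v$ is $\Q[\groupG]$-linear and lies in $\Vaomega$. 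For $\Da$-linearity, recall that the left $\Da$-action on $\Vaomega$ comes from the identification $\Da = \End_{\Q[\groupG]}(\Waomega)^{\mathrm{opp}}$: an element $d \in \Da$ acts by $d \cdot v = v \circ d$. Then $\va(d \cdot v) = \Frobcohom \circ v \circ d = d \cdot \va(v)$, so $\va$ is $\Da$-linear, and $\Qell$-linearity is immediate.

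Next I would verify the main claim by following a pure tensor $w \tensor v$ around the square. Under the isomorphism of \crllr{\ref{result:iso.components.Q}}, $w \tensor v$ corresponds to $v(w) \in \isotypiccomponentQell{a}$; applying Frobenius in the target gives $\Frobcohom(v(w))$. On the other hand,
\[
(\Id \tensor \va)(w \tensor v) = w \tensor (\Frobcohom \circ v),
\]
which the isomorphism sends to $(\Frobcohom \circ v)(w) = \Frobcohom(v(w))$. By linearity the two endomorphisms agree on all of $\Waomega \tensor[\Da] \Vaomega$.

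There is no real obstacle here: once one knows from the previous lemma that $\Frobcohom$ commutes with $\groupG^*$, and hence that it both stabilizes $\isotypiccomponentQell{a}$ and induces a well-defined map on $\Vaomega$, the statement is a formal computation with the canonical isomorphism $\Waomega \tensor[\Da] \Hom_{\Q[\groupG]}(\Waomega,\isotypiccomponentQell{a}) \iso \isotypiccomponentQell{a}$. The only subtle point to spell out is the $\Da$-linearity of $\va$, which is why one must be careful with the opposite-ring convention in the definition of $\Da$.
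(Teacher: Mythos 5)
Your proposal is correct and follows essentially the same route as the paper: identify $\Waomega \tensor[\Da] \Vaomega$ with its image under $w \tensor v \mapsto v(w)$, compute $\Frobcohom(v(w)) = (\Frobcohom \circ v)(w)$ on pure tensors, and check that $\va$ is $\Da \tensor[\Q] \Qell$-linear via $(d\tensor\lambda)v = \lambda(v \circ d)$. Your extra verification that $\Frobcohom \circ v$ is again $\Q[\groupG]$-linear (using the preceding lemma) is a point the paper leaves implicit, but the argument is the same.
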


\begin{proof}
The action of the Frobenius on $\Waomega \tensor[\Da] \Vaomega$ is given by
\begin{align*}
\Frobcohom(w \tensor v)
& = \Frobcohom(v(w)) = (\Frobcohom \circ v)(w) = \va(v)(w) = w \tensor \va(v) \\
& = (\Id \tensor \va)(w \tensor v).
\end{align*}
The structure of $\Da \tensor[\Q] \Qell$-module of $\Vaomega = \Hom_{\Q[\groupG]}(\Waomega,\HetladQellprim{n-2}{\dworkhypersurfaceFqbar})$ is given by $(d \tensor \lambda) v = \lambda (v \circ d)$. We have
\[\Frobcohom \circ (\lambda (v \circ d)) = \lambda (\Frobcohom \circ v) \circ d,\]
and hence the map $\va$ is an endomorphism of the $\Da \tensor[\Q] \Qell$-module $\Vaomega$.
\end{proof}

We deduce the following result, which describes the reciprocal polynomial of the characteristic polynomial of the Frobenius on each isotypic component.

\begin{proposition}\label{result:action.frob.WaxVa}
Let $\omega$ be a \nth{$\na$} root of unity, and set\label{definition:Pa}\label{definition:Qa}
\begin{align*}
& \Pa(t) = \deth{1-t\va}{\Vaomega/\Da\tensor[\Q]\Qell} \in \Da\tensor[\Q]\Qell[t]; \\
& \Qa(t) = \NormFext{\Da\tensor \Qell[t]}{\Qell[t]}(\Pa(t)) \in \Qell[t].
\end{align*}
We have $\deg \Pa = \mprimea$ and $\deg \Qa = \frac{\eulerphi(\na)}{\card{\img\ka}}\mprimea$. The reciprocal polynomial of the characteristic polynomial of the Frobenius over $\Waomega \tensor[\Da] \Vaomega$ is given by
\[\deth{1-t\Frobcohom}{\Waomega \tensor[\Da] \Vaomega} = \Qa(t)^{\gammaa/\da},\]
where $\gammaa$ is the number of permutations of $(a_1,\dots,a_n)$ and $\da$ is the integer defined in \subsctn{\ref{definition:da}}.
\end{proposition}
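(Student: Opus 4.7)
The strategy is to compute the characteristic polynomial of $\Frobcohom = \Id \tensor \va$ on $\Waomega \tensor[\Da] \Vaomega$ in two stages: first use the freeness of $\Waomega$ as a right $\Da$-module to reduce the question to a power of the $\Qell$-characteristic polynomial of $\va$ on $\Vaomega$, then identify that characteristic polynomial as the norm $\Qa(t)$ of $\Pa(t)$.

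The degree statements are immediate. By the previous proposition, $\Vaomega$ is free of rank $\mprimea$ over $\Da \tensor_\Q \Qell$ and $\va$ is $(\Da \tensor \Qell)$-linear, so $\Pa(t) = \deth{1 - t\va}{\Vaomega/\Da\tensor \Qell}$ has degree $\mprimea$ in $t$ (the leading coefficient $\pm \det \va$ is invertible because $\Frobcohom$ itself is). The norm from $\Da \tensor \Qell$ down to $\Qell$ multiplies degrees by $[\Da : \Q] = \eulerphi(\na)/\card{\img \ka}$, giving $\deg \Qa = \mprimea \cdot \eulerphi(\na)/\card{\img \ka}$.

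For the main identity, the first step relies on the fact that $\Waomega$ is a simple $\Q[\groupG]$-module with endomorphism ring the field $\Da$ (previous theorem), hence a free right $\Da$-module. A dimension count using $\dim_\Q \Waomega = [\Sn : \SaQell]\cdot\eulerphi(\na)$ (induction from $\Mua$), $[\SaQell : \SaQellbar] = \card{\img \ka}$, $[\SaQellbar : \SaQellbarprime] = \da$, $\gammaa = [\Sn : \SaQellbarprime]$, and $\dim_\Q \Da = \eulerphi(\na)/\card{\img \ka}$ yields the rank
\[r \;=\; \frac{\dim_\Q \Waomega}{\dim_\Q \Da} \;=\; \frac{\gammaa}{\da}.\]
Fixing a right $\Da$-basis $w_1,\dots,w_r$ of $\Waomega$ produces a $\Qell$-linear isomorphism $\Waomega \tensor[\Da] \Vaomega \iso \Vaomega^r$ (a sum over $i$ of $w_i \tensor \Vaomega$) under which $\Id \tensor \va$ becomes the diagonal action by $\va$ on each factor. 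Consequently
\[\deth{1 - t\Frobcohom}{\Waomega \tensor[\Da] \Vaomega} \;=\; \deth{1 - t\va}{\Vaomega}^{\gammaa/\da},\]
where the right-hand determinant is computed on $\Vaomega$ viewed as a $\Qell$-vector space.

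Finally, since $\Vaomega$ is free of rank $\mprimea$ over the commutative $\Qell$-algebra $\Da \tensor_\Q \Qell$ and $\va$ is $(\Da\tensor\Qell)$-linear, the standard relation between characteristic polynomials over a finite free commutative extension and over its base gives
\[\deth{1 - t\va}{\Vaomega} \;=\; \NormFext{(\Da \tensor \Qell)[t]}{\Qell[t]}(\Pa(t)) \;=\; \Qa(t),\]
which combined with the previous display yields the announced formula. The delicate point is this last identity: it is classical when $\Da \tensor \Qell$ is a field, but in general $\Da \tensor \Qell$ decomposes as a product of local fields $\prod_i L_i$, so one must accordingly split $\Vaomega = \bigoplus_i V_i$ and $\Pa = (\Pa^{(i)})_i$, apply the field-case formula on each factor, and multiply using the multiplicativity of the norm.
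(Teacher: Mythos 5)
Your proof is correct and follows essentially the same route as the paper: it factors $\deth{1-t\Frobcohom}{\Waomega \tensor[\Da] \Vaomega}$ as $\deth{1-t\va}{\Vaomega/\Qell}^{\dim_{\Da}\Waomega}$ with $\dim_{\Da}\Waomega = \gammaa/\da$, and then identifies $\deth{1-t\va}{\Vaomega/\Qell}$ with $\NormFext{\Da\tensor[\Q]\Qell[t]}{\Qell[t]}(\Pa(t))$. The only difference is that where the paper cites Bourbaki for these two identities, you prove them directly (choosing a $\Da$-basis of $\Waomega$, and splitting $\Da\tensor[\Q]\Qell$ into a product of fields), which is a sound and complete substitute.
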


\begin{proof}
As $\Frobcohom$ acts on $\Waomega \tensor[\Da] \Vaomega$ by $\Id \tensor \va$, we have \cite[\subsctn{8.6}, \exmpl{3}, \pg{101}]{Bourbaki.algebre.iii}
\begin{align*}
& \deth{1-t\Frobcohom}{\Waomega \tensor[\Da] \Vaomega/\Qell} \\
& \hspace{8em} = \deth{1-t\va}{\Vaomega/\Qell}^{\dim_{\Da}\Waomega} \\
& \hspace{8em} = \deth{1-t\va}{\Vaomega/\Qell}^{(\dim_{\Q}\Waomega)/[\Da:\Q]},
\end{align*}
with \cite[\subsctn{9.4}, \prpstn{6}, \pg{112}]{Bourbaki.algebre.iii}
\[\deth{1-t\va}{\Vaomega/\Qell} = \NormFext{\Da\tensor[\Q] \Qell[t]}{\Qell[t]}(\deth{1-t\va}{\Vaomega/\Da\tensor[\Q]\Qell}),\]
which shows the announced formula given the following remarks:
\begin{subproperty}
   \item the degree of the polynomial $\Pa(t)$ is $\mprimea = \dim_{\Da\tensor[\Q]\Qell}{\Vaomega}$;
   \item the degree of the polynomial $\Qa(t)$ is $[\Da:\Q] \cdot \deg \Pa = \frac{\eulerphi(\na)}{\card{\img \ka}}\mprimea$;
   \item the dimension of $\Waomega$ over $\Q$ is $\eulerphi(\na) [\Sn\colonsep\SaQellbar] = \frac{\eulerphi(\na)}{\card{\img\ka}}\frac{\gammaa}{\da} = \frac{\gammaa}{\da}{[\Da\colonsep\Q]}$, and thus $\frac{\dim_{\Q}\Waomega}{[\Da\colonsep\Q]} = \frac{\gammaa}{\da}$.\qedhere
\end{subproperty}
\end{proof}

\subsection{Rationality and independence of \texorpdfstring{$\ell$}{l} of the characteristic polynomials}\label{subsection:independance.ell}

The aim of this~\subsctn{\ref{subsection:independance.ell}} is to show that the polynomials $\Qa$ defined in \prpstn{\ref{result:action.frob.WaxVa}} have rational coefficients an are independent of $\ell$. We start with the following lemma, which we will use a couple of times in what follows.

\begin{lemma}\label{lemme:det.ind.ell=tr.puiss.r.ind.ell}
Let $E$ be a finite dimensional vector space over $\Qell$ and $u$ an endomorphism of $E$. The polynomial $\det(1-tu)$ is an element of $\Q[t]$ independent of $\ell$ if and only if for all $r \geq 1$ the number $\trace(u^r)$ belongs to $\Q$ and is independent of $\ell$.
\end{lemma}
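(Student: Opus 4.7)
The plan is to exploit the formal power series identity
\[
-\log \deth{1-tu}{E} = \sum_{r\geq 1} \frac{\trace(u^r)}{r}\,t^r
\]
in $\Qell[[t]]$, which gives a mechanical translation between the coefficients of $\deth{1-tu}{E}$ and the sequence $(\trace(u^r))_{r\geq 1}$. To establish the identity, I would pass to an algebraic closure $\Qellbar$, factor
\[
\deth{1-tu}{E} = \prod_{i=1}^{\dim E}(1-\lambda_i t)
\]
where the $\lambda_i$ are the eigenvalues of $u$ (with multiplicity), and take formal logarithms:
\[
-\log\prod_i(1-\lambda_i t) = \sum_i \sum_{r\geq 1} \frac{\lambda_i^r}{r}\,t^r = \sum_{r\geq 1} \frac{\trace(u^r)}{r}\,t^r,
\]
using $\trace(u^r) = \sum_i \lambda_i^r$.

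For the forward direction, if $P(t) = \deth{1-tu}{E}$ lies in $\Q[t]$ and is independent of $\ell$, then $P(0)=1$ and $-\log P(t)$ is a formal series in $\Q[[t]]$ whose coefficients depend polynomially (with rational coefficients) on those of $P$; in particular, they are independent of $\ell$. Matching coefficients with the right-hand side yields $\trace(u^r)/r \in \Q$ independent of $\ell$, hence $\trace(u^r)\in \Q$ independent of $\ell$ for every $r\geq 1$.

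For the converse, suppose all $\trace(u^r)$ lie in $\Q$ and are independent of $\ell$. Then the series $S(t) = \sum_{r\geq 1}\trace(u^r)t^r/r$ lies in $\Q[[t]]$ and is independent of $\ell$; its formal exponential $\exp(-S(t))$, which by the identity equals $\deth{1-tu}{E}$ as a power series, again has rational coefficients independent of $\ell$ (the coefficients of $\exp$ applied to a series in $\Q[[t]]$ with zero constant term are rational polynomials in the input coefficients---this is just Newton's identities packaged differently). Since $\deth{1-tu}{E}$ is in fact a polynomial (its degree equals $\dim E$, which one can recover as the largest $d$ for which the coefficient of $t^d$ in the exponential series is nonzero, but independence is already clear coefficient by coefficient), the polynomial itself has rational coefficients independent of $\ell$.

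There is no real obstacle: the only point requiring mild care is the formal manipulation of $\log$ and $\exp$ on power series with constant term $1$ and $0$ respectively, and the remark that these operations preserve the subring $\Q[[t]]$ and commute with any fixed embedding $\Q\hookrightarrow\Qell$, so that "rational and independent of $\ell$'' is transported faithfully in both directions.
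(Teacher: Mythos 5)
Your proof is correct and is essentially the argument the paper has in mind: the paper's one-line proof invokes Vi\`ete's and Newton's formulas, and your formal identity $-\log\det(1-tu)=\sum_{r\ge 1}\trace(u^r)t^r/r$ is exactly Newton's identities packaged as a generating function, with $\log$ and $\exp$ providing the universal rational-polynomial translation in both directions. The only point worth making explicit (which you do note in passing) is that these universal formulas have coefficients in $\Q$ and commute with the embedding $\Q\hookrightarrow\Qell$, so ``rational and independent of $\ell$'' transfers faithfully.
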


\begin{proof}
This is a straightforward consequence both of Viete's formulas (relating roots and coefficients of a polynomial) and of Newton's formulas.
\end{proof}

The following lemma allows us to relate the independence of $\ell$ of $\Qa$ to that of $\Qa(t)^{\gammaa/\da}$.

\begin{lemma}\label{result:ind.ell.puiss=>ind.ell}
Let $P \in 1+t\Q[t]$ be a non-constant polynomial and $\gamma \in \N^*$. If, for each $\ell$, there is a $Q_\ell \in 1+t\Qell[t]$ such that $Q_\ell^{\gamma} = P$, then $Q_\ell$ belongs to $1+t\Q[t]$ and is independent of $\ell$.
\end{lemma}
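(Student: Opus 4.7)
The plan is to reconstruct $Q_\ell$ from the multiset of roots of $P$ and show it is automatically rational and independent of $\ell$. First, I would fix an embedding $\overline{\Q} \hookrightarrow \Qellbar$ and factor $P(t) = p_d \prod_i (t - \alpha_i)^{m_i}$ in $\overline{\Q}[t]$, where the $\alpha_i$ are the distinct roots of $P$ and $p_d$ is its leading coefficient. Since $\Qellbar[t]$ is a UFD and $Q_\ell^{\gamma} = P$, comparing factorizations forces $\gamma \mid m_i$ for every $i$, and
\[Q_\ell(t) = c_\ell \prod_i (t - \alpha_i)^{m_i/\gamma}\]
for some scalar $c_\ell \in \Qellbar$ satisfying $c_\ell^\gamma = p_d$.

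Next, I would set $R(t) = \prod_i (t - \alpha_i)^{m_i/\gamma} \in \overline{\Q}[t]$ and descend it to $\Q[t]$. For any $\sigma \in \Gal(\overline{\Q}/\Q)$, $\sigma$ permutes the distinct roots $\alpha_i$; because $P \in \Q[t]$, this permutation preserves the multiplicities $m_i$, hence also the exponents $m_i/\gamma$. Therefore $R^\sigma = R$, so $R \in \Q[t]$. Consequently the relation $Q_\ell = c_\ell R$ already lives in $\Q_\ell[t]$, which forces $c_\ell \in \Q_\ell$.

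Finally, comparing constant terms, $1 = Q_\ell(0) = c_\ell R(0)$. Since $P(0) = 1 \neq 0$, the point $0$ is not a root of $P$, so $R(0) \in \Q^\times$; hence $c_\ell = 1/R(0) \in \Q^\times$, independent of $\ell$. It follows that $Q_\ell = R/R(0)$ lies in $1 + t\Q[t]$ and is the same polynomial for every $\ell$.

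No serious obstacle arises. The only step that deserves a moment's attention is the Galois descent of $R$ from $\overline{\Q}[t]$ to $\Q[t]$, and this is immediate once one notes that $\Gal(\overline{\Q}/\Q)$ acts on the root set of $P$ by a multiplicity-preserving permutation. Everything else is routine polynomial arithmetic; in particular the argument makes no use of the hypothesis that $P$ is non-constant beyond ensuring that some $m_i$ is positive (otherwise $Q_\ell = 1$ trivially).
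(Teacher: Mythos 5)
Your argument is correct, but it proceeds along a genuinely different line from the paper. The paper's proof is a two-line formal power series argument: in $1+t\K[[t]]$ over any field $\K$ of characteristic zero, a $\gamma$\textsuperscript{th} root is unique (binomial series), so $Q_\ell$ must coincide with the canonical root $\sqrt[\gamma]{P} \in 1+t\Q[[t]]$, which is therefore a polynomial with rational coefficients and independent of $\ell$. You instead work with the factorization of $P$ in $\overline{\Q}[t]$, transported into $\Qellbar[t]$ via a chosen embedding: unique factorization forces $\gamma \divides m_i$ and pins down $Q_\ell$ as $c_\ell\prod_i(t-\alpha_i)^{m_i/\gamma}$, Galois descent (the multiplicity-preserving action of $\Gal(\overline{\Q}/\Q)$ on the roots) shows the monic product $R$ lies in $\Q[t]$, and the normalization $Q_\ell(0)=1$ fixes the scalar $c_\ell = 1/R(0) \in \Q$. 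All of these steps are sound, and as a by-product you get slightly more than the statement asks for: the explicit description $Q_\ell = R/R(0)$, the divisibility of every root multiplicity of $P$ by $\gamma$, and uniqueness of the root within $1+t\Qell[t]$. What the paper's route buys is economy and the absence of any choice of embedding or appeal to $\overline{\Q}$; what yours buys is a concrete construction of the common root from the roots of $P$. Only a cosmetic caveat: make explicit that the comparison of factorizations takes place in $\Qellbar[t]$ (where $Q_\ell$ lives) after transporting the $\alpha_i$ by the chosen embedding, which is clearly what you intend.
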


\begin{proof}
Denote by $\sqrt[\gamma]{P}$ the unique element of $1 + t\Q[[t]]$ such that $(\sqrt[\gamma]{P})^\gamma = P$. We have $Q_\ell^\gamma = (\sqrt[\gamma]{P})^\gamma = P$ with $Q_\ell \in 1 + t\Qell[[t]]$, which shows, as $\sqrt[\gamma]{P}$ is unique in $1 + t\Qell[[t]]$, that $Q_\ell = \sqrt[\gamma]{P}$. Consequently, $Q_\ell$ belongs to $1 + t\Q[t]$ and is independent of $\ell$.
\end{proof}

We now deal with the independence of $\ell$ of $\Qa(t)^{\gammaa/\da}$ thanks to an argument of projector.

\begin{proposition}\label{result:Qagamma.ind.ell}
For each $a \in \cargroupA$, the polynomial $\Qa(t)^{\gammaa/\da}$ has rational coefficients and is independent of $\ell$.
\end{proposition}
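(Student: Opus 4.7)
The plan is to combine a central idempotent in $\Q[\groupG]$ with the Grothendieck\namedash Lefschetz trace formula. By \prpstn{\ref{result:action.frob.WaxVa}}, $\Qa(t)^{\gammaa/\da}$ is precisely the characteristic polynomial of $\Frob^*$ acting on the isotypic component $\Waomega \tensor[\Da] \Vaomega$; so by \lmm{\ref{lemme:det.ind.ell=tr.puiss.r.ind.ell}} it suffices to prove that for each $r \geq 1$ the trace
\[T_r := \trace\bigl((\Frob^*)^r \mid \Waomega \tensor[\Da] \Vaomega\bigr)\]
is a rational number independent of~$\ell$.

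To cut out the isotypic component by a \emph{rational} projector, I would use that $\Q[\groupG]$ is semisimple (Maschke) and that $\Waomega$ is a simple $\Q[\groupG]$-module: there exists a central primitive idempotent $e_{a,\omega} = \sum_{g \in \groupG} c_g\,g \in \Q[\groupG]$ (with $c_g \in \Q$) whose action on any $\Q[\groupG]$-module is the projection onto its $\Waomega$-isotypic component. Since $\groupG$ acts on $\dworkhypersurface$ by automorphisms defined over $\Fq$, the endomorphism $\Frob^*$ commutes with the $\groupG$-action on $\HetladQellprim{n-2}{\dworkhypersurfaceFqbar}$, and therefore
\[T_r = \sum_{g \in \groupG} c_g\,\trace\bigl(g^*(\Frob^*)^r \mid \HetladQellprim{n-2}{\dworkhypersurfaceFqbar}\bigr).\]
It is thus enough to show that each individual trace on the right is rational and independent of~$\ell$.

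For this, I would invoke \lmm{\ref{result:trivial.action.Hi}} together with \rmrk{\ref{remark:hetprim}}: $g^*$ acts as the identity on $\HetladQell{i}{\dworkhypersurfaceFqbar}$ for $i \neq n-2$ and on $\HetladQellinprim{n-2}{\dworkhypersurfaceFqbar}$, while $\Frob^*$ acts on these spaces by explicit powers of~$q$. Consequently the full alternating sum
\[L_r(g) := \sum_{i=0}^{2(n-2)} (-1)^i \trace\bigl(g^*(\Frob^*)^r \mid \HetladQell{i}{\dworkhypersurfaceFqbar}\bigr)\]
differs from $(-1)^{n-2}\trace(g^*(\Frob^*)^r \mid \HetladQellprim{n-2}{\dworkhypersurfaceFqbar})$ by an integer polynomial in~$q$ that is manifestly independent of~$\ell$. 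The Grothendieck\namedash Lefschetz trace formula applied to the endomorphism $g \circ \Frob^r$ of $\dworkhypersurfaceFqbar$ (a twist of the $q^r$-Frobenius by the $\Fq$-automorphism $g$) then identifies $L_r(g)$ with the number of $\overline{\Fq}$-points fixed by $g \circ \Frob^r$, a non-negative integer which is \emph{a fortiori} rational and independent of~$\ell$. The main technical point, not covered by \thrm{\ref{theorem:trace.lefschetz:EP}} (which was stated only for finite-order automorphisms), is this extension of the trace formula to a composition of an automorphism with a power of Frobenius; it is the usual Grothendieck\namedash Lefschetz formula for the correspondence $g \circ \Frob^r$ from SGA~5. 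Tracing back through the successive reductions yields the claimed rationality and $\ell$-independence of $\Qa(t)^{\gammaa/\da}$.
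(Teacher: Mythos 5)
Your argument is correct, and its skeleton matches the paper's: your central primitive idempotent $e_{a,\omega}$ is exactly the projector $\pi_a = \frac{\lambda}{\card{\groupG}}\sum_{g}\carbis_a(g^{-1})g^*$ used in the paper, and both proofs reduce, via \prpstn{\ref{result:action.frob.WaxVa}} and \lmm{\ref{lemme:det.ind.ell=tr.puiss.r.ind.ell}}, to showing that each trace $\traceh{g^*(\Frobcohom)^r}{\HetladQellprim{n-2}{\dworkhypersurfaceFqbar}}$ is rational and independent of $\ell$. Where you genuinely diverge is in how that last point is established. The paper invokes the Katz\namedash Messing theorem \cite[\thrm{2.2}]{Katz.Messing}, adapted to the primitive part in \lmm{\ref{result:ind.ell:Hiprime}}: for an endomorphism of the hypersurface extending to $\varproj{n-1}$, the trace on a single cohomology space is an integer independent of $\ell$. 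You instead apply the Grothendieck\namedash Lefschetz formula to the twisted Frobenius $g \circ \Frob^r$ (correctly noting that \thrm{\ref{theorem:trace.lefschetz:EP}} does not cover this case; the relevant fact is that the fixed points are isolated of multiplicity one because the differential of Frobenius vanishes), and then use \lmm{\ref{result:trivial.action.Hi}} and \rmrk{\ref{remark:hetprim}} to peel off the non-middle and inprimitive contributions, which are explicit powers of $q$; the remaining middle primitive trace is then a point count minus a known integer, hence rational and $\ell$-independent. Both routes are sound. Yours is more elementary \namedash{} it avoids Katz\namedash Messing, hence Deligne's Riemann hypothesis, and is in the spirit of the trace computations of \sctn{\ref{section:preliminaries}}\rangedash\sctn{\ref{section:Qellbar[G]}} \namedash{} but it relies on the fact that all cohomology of a smooth hypersurface outside the primitive middle part is explicitly understood. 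What the paper's choice buys is reusability: the same Katz\namedash Messing input drives the finer result that the polynomials $\Pa$ themselves are independent of $\ell$ (\prpstn{\ref{result:tr.va.ind.ell}} and \thrm{\ref{result:Pa.independent.ell}}), where one needs $\ell$-independence of a trace on a single cohomology space in a more general setting than an alternating sum over all degrees.
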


\begin{proof}
Denote by $\carbis_a \colon g \in G \mapsto \traceh{g^*}{\Waomega/\Q}$ the character of the simple $\Q[\groupG]$-module $\Waomega$. There is a projection $\pi_a$ of $\HetladQellprim{n-2}{\dworkhypersurfaceFqbar}$ onto $\Waomega \tensor[\Da] \Vaomega$ of the form
\[\pi_a = \frac{\lambda}{\card{\groupG}} \sum_{g \in \groupG}{\carbis_a(g^{-1}) g^*}\text{,} \qquad \text{avec $\lambda \in \Q$,}\]
where $\lambda$ is computed by taking the trace of both members of the equality
\[\dim_{\Q}\Waomega = \frac{\lambda}{\card{\groupG}} \sum_{g \in \groupG}{\carbis_a(g^{-1}) \carbis_a(g)} = \lambda [\Da:\Q].\]
(Indeed, over $\Qellbar$, $\carbis_a$ is the direct sum of $[\Da:\Q]$ irreducible characters as we have seen in~\sctn{\ref{section:Qell[G]}}.) We thus have $\lambda = \dim_{\Da}\Waomega$.

Because the image of the projection $\pi_a$ is $\Waomega \tensor[\Da] \Vaomega$, we have
\[\Qa(t)^{\gammaa/\da} = \deth{1-t (\pi_a \circ \Frobcohom)}{\HetladQellprim{n-2}{\dworkhypersurfaceFqbar}}\text{.}\]
Using \lmm{\ref{lemme:det.ind.ell=tr.puiss.r.ind.ell}}, we only have to show that the powers of $\pi_a \circ \Frobcohom$ have a trace belonging to $\Q$ and independent of $\ell$. This results from the fact that these powers can be written as linear combinations with coefficients in $\Q$ of quantities of the type $f^*$ where $f$ is an endomorphism of the variety $\dworkhypersurface$ which extends to $\varproj{n-1}$ and from the following lemma, which is an adaptation of \cite[\thrm{2.2}, \pg{76}]{Katz.Messing} to the case of traces over the primitive part of the cohomology of an irreducible hypersurface (since $n \geq 3$, $\dworkhypersurface$ is irreducible).
\end{proof}

\begin{lemma}\label{result:ind.ell:Hiprime}
Let~$X$ be a non-singular, irreducible hypersurface of $\varproj{n-1}$. If $f \colon X \to X$ is an endomorphism of~$X$ which extends into an endomorphism of $\varproj{n-1}$, then $\traceh{f^*}{\HetladQellprim{n-2}{\Xbar}}$ is an integer which is independent of~$\ell$.
\end{lemma}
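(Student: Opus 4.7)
The plan is to split the trace on $\HetladQellprim{n-2}{\Xbar}$ as the difference of the trace on $\HetladQell{n-2}{\Xbar}$ and the trace on $\HetladQellinprim{n-2}{\Xbar}$, using the short exact sequence $0 \to \HetladQellinprim{n-2}{\Xbar} \to \HetladQell{n-2}{\Xbar} \to \HetladQellprim{n-2}{\Xbar} \to 0$. The inprimitive contribution will be handled by the very simple structure of $H^*(\varproj{n-1})$, and the full middle contribution by Theorem~2.2 of Katz--Messing.

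First I would exploit the assumption that $f$ extends to a self-morphism $\tilde f$ of $\varproj{n-1}$. Writing $j \colon X \into \varproj{n-1}$ for the canonical inclusion, the equality $j \circ f = \tilde f \circ j$ gives $f^* \circ j^* = j^* \circ \tilde f^*$ on $\ell$-adic cohomology. The morphism $\tilde f$ is given by $n$ homogeneous polynomials of some common degree $d \geq 1$; pulling back the hyperplane class, one checks that $\tilde f^*$ acts on the one-dimensional space $\HetladQell{2i}{\varprojFpbar{n-1}}$ by multiplication by the integer $d^i$, independent of $\ell$. Together with the weak Lefschetz identification $\HetladQellinprim{n-2}{\Xbar} = j^*(\HetladQell{n-2}{\varprojFpbar{n-1}})$ recalled in Remark~\ref{remark:hetprim}, this shows that $f^*$ preserves $\HetladQellinprim{n-2}{\Xbar}$ and that
\[\traceh{f^*}{\HetladQellinprim{n-2}{\Xbar}} = \delta_{n-2}\, d^{(n-2)/2},\]
which is an integer independent of $\ell$ (the factor $d^{(n-2)/2}$ plays no role when $n$ is odd, since $\delta_{n-2} = 0$ in that case).

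The hard part is to prove the analogous statement for the full middle cohomology, namely $\traceh{f^*}{\HetladQell{n-2}{\Xbar}} \in \Z$ and independent of $\ell$. For this I would invoke Theorem~2.2 of Katz--Messing, applied to $f$ viewed as the restriction to $X$ of an algebraic endomorphism of $\varproj{n-1}$ (equivalently, to the algebraic cycle on $X \times X$ given by its graph, obtained by restriction from the graph of $\tilde f$ on $\varproj{n-1} \times \varproj{n-1}$). Their result gives rationality and $\ell$-independence of the trace on $\HetladQell{n-2}{\Xbar}$; integrality then follows from the standard argument that $f^*$ stabilizes a natural $\Z_\ell$-lattice, so its trace lies in $\Z_\ell \cap \Q = \Z_{(\ell)}$ for every $\ell$ and hence in $\Z$. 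Subtracting the inprimitive trace computed above from the full middle trace concludes the proof. The main obstacle is precisely this appeal to Katz--Messing: their theorem is classically phrased for Frobenius, and the substantive point of the lemma is the adaptation to a general endomorphism extending to $\varproj{n-1}$; the remaining ingredients are weak Lefschetz plus elementary computations on $H^*(\varproj{n-1})$.
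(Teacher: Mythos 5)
Your proposal is correct and follows essentially the same route as the paper: split $\HetladQell{n-2}{\Xbar}$ into primitive and inprimitive parts (using that $f$ extends to $\varproj{n-1}$ so that $f^*$ preserves the inprimitive part), apply Katz--Messing, Theorem~2.2, to get that the trace on the full middle cohomology is an integer independent of $\ell$, and subtract the inprimitive contribution. The only cosmetic difference is that you compute the inprimitive trace explicitly via the degree of $\tilde f$ acting as $d^i$ on $\HetladQell{2i}{\varprojFpbar{n-1}}$, whereas the paper simply notes it equals $\traceh{f^*}{\HetladQell{n-2}{\varprojFqbar{n-1}}}$ and cites Katz--Messing once more.
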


\begin{proof}
We have $\HetladQell{n-2}{\Xbar} \iso \HetladQellprim{n-2}{\Xbar} \oplus \HetladQellinprim{n-2}{\Xbar}$ with $\traceh{f^*}{\HetladQell{n-2}{\Xbar}}$ and $\traceh{f^*}{\HetladQellinprim{n-2}{\Xbar}} = \traceh{f^*}{\HetladQell{n-2}{\varprojFqbar{n-1}}}$ two integers independent of $\ell$ by \cite[\thrm{2.2}, \pg{76}]{Katz.Messing}\footnote{On this subject, see also \cite[\pg{119}]{Deligne.Lusztig} and \cite[\subsctn{3.5}, \pgs{112}{113}]{Illusie.miscellaneous}.}.
\end{proof}

Combining \lmm{\ref{result:ind.ell.puiss=>ind.ell}} and \prpstn{\ref{result:Qagamma.ind.ell}}, we deduce the announced result.

\begin{theorem}
The polynomials $\Qa(t)$ have rational coefficients and are independent of~$\ell$.
\end{theorem}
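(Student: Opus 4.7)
The plan is to combine the two preceding results: Proposition~\ref{result:Qagamma.ind.ell} already shows that $\Qa(t)^{\gammaa/\da}$ has rational coefficients and is independent of $\ell$, while Lemma~\ref{result:ind.ell.puiss=>ind.ell} lets us descend $\ell$-independence from a $\gamma$-th power to the polynomial itself. Concretely, I would apply the lemma with $P = \Qa(t)^{\gammaa/\da}$, $\gamma = \gammaa/\da$, and $Q_\ell = \Qa(t)$.

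Two easy verifications are needed before invoking the lemma. First, $\gammaa/\da$ must be a positive integer: since $\SaQellbar = \SaQellbarprime \rtimes \SigmaQellbar$ with $|\SigmaQellbar| = \da$, we have $\gammaa = [\Sn:\SaQellbarprime] = [\Sn:\SaQellbar]\cdot\da$, hence $\gammaa/\da = [\Sn:\SaQellbar] \in \N^*$. Second, $\Qa(t)$ must lie in $1 + t\Qell[t]$: this is because $\Pa(t) = \deth{1-t\va}{\Vaomega/\Da\tensor[\Q]\Qell}$ has constant term $1$ in $(\Da\tensor[\Q]\Qell)[t]$, and the norm map $\NormFext{\Da\tensor[\Q]\Qell[t]}{\Qell[t]}$ sends elements with constant term $1$ to elements with constant term $1$. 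With these two checks, Lemma~\ref{result:ind.ell.puiss=>ind.ell} immediately yields $\Qa(t) \in 1 + t\Q[t]$ and its independence of $\ell$, which is precisely the announced result.

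There is no substantive obstacle at this point. All the genuine content has already been placed upstream: the projector argument relating $\pi_a \circ \Frobcohom$ to linear combinations of $f^*$ for $f$ extending to $\varproj{n-1}$, together with the Katz--Messing $\ell$-independence statement (Lemma~\ref{result:ind.ell:Hiprime}), do the real work inside Proposition~\ref{result:Qagamma.ind.ell}; the uniqueness of formal $\gamma$-th roots in $1 + t\Qell[[t]]$ does the real work inside Lemma~\ref{result:ind.ell.puiss=>ind.ell}. The present theorem is thus a clean packaging corollary of these two ingredients.
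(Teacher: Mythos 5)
Your proof is correct and follows exactly the paper's route: the paper's entire proof of this theorem is the one-line remark that it follows by combining Lemma~\ref{result:ind.ell.puiss=>ind.ell} with Proposition~\ref{result:Qagamma.ind.ell}. Your two additional verifications (that $\gammaa/\da = [\Sn:\SaQellbar]$ is a positive integer, and that $\Qa$ has constant term $1$) are correct and merely make explicit what the paper leaves implicit.
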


In \subsctn{\ref{subsection:consequences.fact.zeta}}, we will see a stronger result, namely that the polynomials $\Pa$ are independent of $\ell$.

\subsection{Factorization of the zeta function}\label{subsection:consequences.fact.zeta}\label{subsection:decomp.fact.zeta.ext.finies}

From the preceding results, we can deduce a factorization over $\Q$ of the zeta function as well as the existence of a decomposition of some of the factors over finite extensions of $\Q$.

\begin{theorem}
The zeta function of the hypersurface $\dworkhypersurface$ of $\varprojFq{n-1}$ defined by $x_1^n + \dots + x_n^n - n \parameter x_1 \dots x_n = 0$ (with $\parameter \in \Fqnonzero$ satisfying $\psi^n \neq 1$) factors over $\Q$ as
\[\funzeta{\dworkhypersurface/\Fq}{t} = \frac{\displaystyle\Bigl(\prod\nolimits_{a \in \ZnZinv \times \Sn \backslash \cargroupA,\text{ }\eta \in \unityroots{\da}(\Ka)}{\Qaomegaeta(t)^{\gammaa/\da}}\Bigr)^{(-1)^{n-1}}} {(1-t)(1-qt)\dots(1-q^{n-2}t)}.\]
(The notations are those of \crllr{\ref{result:decomp:Ma.Qellbar}} and \prpstn{\ref{result:action.frob.WaxVa}}.)
\end{theorem}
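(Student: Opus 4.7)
The plan is to assemble the factorization directly from Remark~\ref{remark:hetprim} and the isotypic description of $\HetladQellprim{n-2}{\dworkhypersurfaceFqbar}$ obtained in Sections~\ref{section:Qell[G]} and~\ref{section:fact.zeta}, with the commutation of Frobenius and $\groupG$ as the only new observation needed.

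First, I would recall the formula of Remark~\ref{remark:hetprim}, which expresses
\[
\funzeta{\dworkhypersurface/\Fq}{t}
 = \frac{\deth{1-t\Frobcohom}{\HetladQellprim{n-2}{\dworkhypersurfaceFqbar}}^{(-1)^{n-1}}}{(1-t)(1-qt)\cdots(1-q^{n-2}t)}.
\]
The task then reduces to factoring the numerator determinant.

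Next, I would invoke Corollary~\ref{result:decomp.cohom.Qell} together with Corollary~\ref{result:iso.components.Q} to get a decomposition of $\HetladQellprim{n-2}{\dworkhypersurfaceFqbar}$ as a $\Qell[\groupG]$-module:
\[
\HetladQellprim{n-2}{\dworkhypersurfaceFqbar}
 \iso \bigoplus_{\substack{a\in\ZnZinv\times\Sn\backslash\cargroupA \\ \etaa\in\unityroots{\da}(\Ka)}}
 \bigl(\Waomegaeta \tensor[\Da] \Vaomegaeta\bigr),
\]
the $(a,\etaa)$-summand being precisely the $\Waomegaeta$-isotypic component of the $\Q[\groupG]$-module structure. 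The key point I would then emphasize is that every element of $\groupG$ is an automorphism of $\dworkhypersurface$ defined over $\Fq$, so $\Frobcohom$ commutes with the action of $\groupG$ on cohomology. Consequently $\Frobcohom$ stabilizes each isotypic component, and the characteristic polynomial factors accordingly:
\[
\deth{1-t\Frobcohom}{\HetladQellprim{n-2}{\dworkhypersurfaceFqbar}}
 = \prod_{a,\etaa}{\deth{1-t\Frobcohom}{\Waomegaeta\tensor[\Da]\Vaomegaeta}}.
\]

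Finally, for each $(a,\etaa)$ I would apply Proposition~\ref{result:action.frob.WaxVa}, which asserts exactly that
\[
\deth{1-t\Frobcohom}{\Waomegaeta\tensor[\Da]\Vaomegaeta} = \Qaomegaeta(t)^{\gammaa/\da}.
\]
Substituting into the formula from Remark~\ref{remark:hetprim} produces the announced factorization. The only mildly nontrivial step is the compatibility claim between the $\Qell[\groupG]$-isotypic decomposition and the Frobenius action; but since the $\groupG$-action and $\Frobcohom$ commute, each isotypic projector commutes with $\Frobcohom$, so this is immediate. Everything else is a direct chain of results already established, so I do not anticipate a genuine obstacle.
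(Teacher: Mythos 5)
Your argument follows the same route as the paper: the paper's own proof is a one-line statement that the formula is a reformulation of the results of \subsctns{\ref{subsection:decomp.isotypique}, \ref{subsection:action.frobenius.composant.iso} and \ref{subsection:independance.ell}}, and your chain (\rmrk{\ref{remark:hetprim}}, the isotypic decomposition of \crllr{\ref{result:decomp.cohom.Qell}} and \crllr{\ref{result:iso.components.Q}}, Frobenius-equivariance, then \prpstn{\ref{result:action.frob.WaxVa}}) is exactly that assembly. One point is missing, though, and it is part of the content of the statement: you establish a factorization of the numerator in $\Qell[t]$, but the theorem claims the factorization is \emph{over $\Q$}, i.e.\ that the polynomials $\Qaomegaeta$ have rational coefficients (and do not depend on the auxiliary prime $\ell$). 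As defined in \prpstn{\ref{result:action.frob.WaxVa}}, $\Qa$ is a priori only an element of $\Qell[t]$; the rationality and independence of $\ell$ are not automatic and are proved in \subsctn{\ref{subsection:independance.ell}} via the projector argument combined with the Katz\namedash Messing theorem (\lmm{\ref{result:ind.ell:Hiprime}}) and the root-extraction \lmm{\ref{result:ind.ell.puiss=>ind.ell}}. Your proof should cite that result (or reproduce its argument); with that one additional reference the proposal is complete and coincides with the paper's proof.
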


\begin{proof}
The previous formula is just a reformulation of the results from \subsctns{\ref{subsection:decomp.isotypique}, \ref{subsection:action.frobenius.composant.iso} and \ref{subsection:independance.ell}}.
\end{proof}

\begin{remarks}
\begin{subremarks}
    \item Let us recall that the factor corresponding to $[0,1,2,\dots,\allowbreak n-1]$ does not intervene (see \rmrk{\ref{remark:ma.non.zero}} \pg{\pageref{remark:ma.non.zero}}).
    \item The polynomials $\Qa$ depend on $\omega^{\ea}$. See \exmpl{\ref{example:n=4}} \pg{\pageref{example:n=4}}.
    \item When $n$ is a prime number (necessarily odd, as $n \geq 3$), we have $\da = 1$ if $a \neq [0,1,2,\dots,n-1]$, and thus $\omegaeta = 1$; hence, in that case, the numbers $\omegaeta$ don't intervene.
    \item As we mentioned in the introduction, a similar result of factorization was proved by R.~Kloosterman in a slightly different context, see \cite[\crllr{6.10}, \pg{448}]{Kloosterman}. The factorization he obtains is a bit coarser as it involves the polynomials $\Ra(t) = \prod_{\eta}{\Qaomegaeta(t)}$; we refer the reader to \exmpl{\ref{example:n=4}} for an illustration of this phenomenon.
\end{subremarks}
\end{remarks}\bigbreak

We now look how the polynomials $\Qa$ behave over the field $\Da$.

\begin{proposition}\label{result:factorization.over.Da}
The polynomials $\Qa$ factor over $\Da$ as a product of $[\Da:\Q]$ polynomials of degree $\mprimea$.
\end{proposition}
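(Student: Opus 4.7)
The plan is to obtain the factorization by passing to an algebraic closure $\Qellbar$ and using the eigenspace decomposition of $\Vaomega$ under the commutative action of $\Da$.

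First I observe that $\Da/\Q$ is Galois: since $\Da$ sits between $\Q$ and $\Ka$ and $\Gal(\Ka/\Q) \iso \ZnaZinv$ is abelian, every intermediate field is Galois over $\Q$. Thus the $[\Da:\Q]$ embeddings $\sigma \colon \Da \hookrightarrow \Qellbar$ all have the same image, and $\Da \tensor[\Q] \Qellbar \iso \prod_\sigma \Qellbar$. Since $\Vaomega$ is free of rank $\mprimea$ over $\Da \tensor[\Q] \Qell$, extending scalars yields
\[\Vaomega \tensor[\Qell] \Qellbar \iso \bigoplus_{\sigma \colon \Da \hookrightarrow \Qellbar}{\Vaomega^{(\sigma)}},\]
where each summand $\Vaomega^{(\sigma)}$ is a $\Qellbar$-vector space of dimension $\mprimea$ on which $\Da$ acts through $\sigma$. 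Because $\va$ is $\Da \tensor[\Q] \Qell$-linear (as established in the proof of \prpstn{\ref{result:action.frob.WaxVa}}), it preserves each $\Vaomega^{(\sigma)}$, so I set $P_\sigma(t) = \deth{1-t\va}{\Vaomega^{(\sigma)}} \in \Qellbar[t]$, a polynomial of degree $\mprimea$. The standard compatibility of the characteristic polynomial with scalar extension gives
\[\Qa(t) = \deth{1-t\va}{\Vaomega/\Qell} = \prod_{\sigma \colon \Da \hookrightarrow \Qellbar}{P_\sigma(t)} \quad \text{in } \Qellbar[t].\]

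It remains to show that each $P_\sigma$ actually has coefficients in $\Da$. I would fix an embedding $\iota \colon \Da \hookrightarrow \Qellbar$ and identify $\Da$ with $\iota(\Da) \subset \Qellbar$. For any $\tau \in \Gal(\Qellbar/\Qell)$, the $\tau$-semilinear action on $\Vaomega \tensor[\Qell] \Qellbar$ is compatible with its action on $\Da \tensor[\Q] \Qellbar \iso \prod_\sigma \Qellbar$, where $\tau$ permutes the primitive idempotents by $\sigma \mapsto \tau \circ \sigma$; hence $\tau$ sends $\Vaomega^{(\sigma)}$ onto $\Vaomega^{(\tau \circ \sigma)}$, which translates into the identity $\tau(P_\sigma)(t) = P_{\tau \circ \sigma}(t)$ (with $\tau$ acting on coefficients). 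If $\tau$ additionally fixes $\Da \subset \Qellbar$ pointwise, then $\tau \circ \sigma = \sigma$ for every $\sigma$ (here one crucially uses that $\Da/\Q$ is Galois, so each image $\sigma(\Da)$ coincides with $\Da$), and thus $\tau(P_\sigma) = P_\sigma$, so $P_\sigma \in \Da[t]$. This yields the desired factorization of $\Qa$ in $\Da[t]$ as a product of $[\Da:\Q]$ polynomials of degree $\mprimea$.

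The main obstacle is the Galois-descent step: verifying that the $\tau$-semilinear action on $\Vaomega \tensor \Qellbar$ really does permute the summands $\Vaomega^{(\sigma)}$ according to $\sigma \mapsto \tau \circ \sigma$, and that this translates into the coefficient-wise action of $\tau$ on the polynomials $P_\sigma$. Once that bookkeeping is settled, the rest is formal, resting only on the structure of \'etale algebras and the standard compatibility of characteristic polynomials with extension of scalars.
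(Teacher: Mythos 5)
The first half of your argument is sound and is essentially the paper's own (very short) proof: splitting $\Vaomega \tensor[\Qell] \Qellbar$ according to the embeddings $\sigma \colon \Da \into \Qellbar$ and writing $\Qa = \prod_{\sigma} P_\sigma$ with $\deg P_\sigma = \mprimea$ is the same thing as the paper's observation that $\Qa(t) = \NormFext{\Da\tensor \Qell[t]}{\Qell[t]}(\Pa(t))$ is the product of the conjugates of $\Pa$. The gap is in your final descent step, and it is not mere bookkeeping. The only Galois group available in your setup is $\Gal(\Qellbar/\Qell)$, and the fixed field of the subgroup of those $\tau$ fixing $\iota(\Da)$ pointwise is the compositum $\Qell\cdot\iota(\Da)$, a finite extension of $\Qell$ \namedash{} not the number field $\Da$. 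So even after the semilinear bookkeeping is settled, your argument only shows $P_\sigma \in (\Qell\cdot\iota(\Da))[t]$; equivalently, it recovers the tautology that $\Pa$ has coefficients in $\Da \tensor[\Q] \Qell$, which is where one starts. The conclusion ``$P_\sigma \in \Da[t]$'' does not follow: there is no action of $\Gal(\Qellbar/\Q)$ on $\ell$-adic cohomology to descend along, and the assertion that these characteristic polynomials of Frobenius have coefficients in a number field is genuine arithmetic content, not formal algebra over \'etale algebras.

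The missing ingredient is exactly what the paper supplies in \thrm{\ref{result:Pa.independent.ell}} and \prpstn{\ref{result:tr.va.ind.ell}}: using Katz\namedash Messing one shows that $\traceh{d\va^r}{\Vaomega/\Qell}$ lies in $\Q$ and is independent of $\ell$ for every $d \in \Da$ and $r \geq 1$ (these traces are $\Q$-linear combinations of traces of $(\Frobcohom)^r \circ g^*$ on the primitive cohomology); since $(x,y) \mapsto \TrFext{\Da\tensor[\Q]\Qell}{\Qell}(xy)$ is nondegenerate, this forces $\traceh{\va^r}{\Vaomega/\Da\tensor[\Q]\Qell}$ to lie in $\Da$, and then \lmm{\ref{lemme:det.ind.ell=tr.puiss.r.ind.ell}} (Newton/Vi\`ete) gives $\Pa \in \Da[t]$, independent of $\ell$; the conjugates of $\Pa$ are then the desired $[\Da:\Q]$ factors of $\Qa$ over $\Da$. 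Your proof becomes correct if the final ``formal'' descent is replaced by this rationality/independence-of-$\ell$ argument (or an equivalent one); as written, that is precisely where it fails.
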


\begin{proof}
As $\Qa(t) = \NormFext{\Da\tensor \Qell[t]}{\Qell[t]}(\Pa(t))$, the polynomial $\Qa$ is the product of the conjugates of $\Pa$.
\end{proof}

The following theorem shows that this factorization is independent of $\ell$.

\begin{theorem}\label{result:Pa.independent.ell}
The polynomials $\Pa$ have coefficients in $\Da$ and are independent of~$\ell$.
\end{theorem}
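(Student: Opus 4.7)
The plan is to identify the components of $\Pa$ under base change to $\Qellbar$, establish their algebraicity and $\ell$-independence via a projector argument refining \prpstn{\ref{result:Qagamma.ind.ell}}, and then use the cyclic Galois structure $\Gal(\Ka/\Q) = \ZnaZinv$ to descend to $\Da[t]$. The key observation is that each simple $\Qellbar[\groupG]$-summand $W^k$ of $\Waomega \tensor[\Q] \Qellbar$ from \crllr{\ref{result:Mua.Qellbar}} depends only on the coset $k \in \ZnaZinv/\img\ka$ (not on the chosen representative), so its character $\chi_k \colon \groupG \to \Ka$ is fixed by $\Gal(\Ka/\Da) = \img\ka$ and hence takes values in $\Da \subset \Ka$. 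After identifying $\Ka$ with $\iotaa(\Ka) \subset \Qellbar$, the projector
\[
\pi_k = \frac{\dim_\Qellbar W^k}{\card{\groupG}} \sum_{g \in \groupG} \chi_k(g^{-1})\, g^* \;\in\; \iotaa(\Da)[\groupG]
\]
sends $\HetladQellbarprim{n-2}{\dworkhypersurfaceFqbar}$ onto the $W^k$-isotypic component $W^k \tensor[\Qellbar] V_k$, where $V_k$ is a $\Qellbar$-vector space of dimension $\mprimea$; moreover $\deth{1 - t \pi_k \Frobcohom}{\HetladQellbarprim{n-2}{\dworkhypersurfaceFqbar}} = P_k(t)^{\dim_\Qellbar W^k}$, with $P_k \in \Qellbar[t]$ the characteristic polynomial of the Frobenius on $V_k$.

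The argument of \prpstn{\ref{result:Qagamma.ind.ell}} now transcribes almost verbatim: each trace $\traceh{(\pi_k\Frobcohom)^r}{\HetladQellbarprim{n-2}{\dworkhypersurfaceFqbar}}$ is an $\iotaa(\Da)$-linear combination of the integers $\traceh{g^*\Frobcohom^r}{\HetladQellprim{n-2}{\dworkhypersurfaceFqbar}}$, each independent of~$\ell$ by \lmm{\ref{result:ind.ell:Hiprime}}. By \lmm{\ref{lemme:det.ind.ell=tr.puiss.r.ind.ell}} the above determinant lies in $\iotaa(\Da)[t]$ and is $\ell$-independent, and \lmm{\ref{result:ind.ell.puiss=>ind.ell}} extracts the $(\dim W^k)$-th root to yield $P_k \in \iotaa(\Da)[t]$ independent of~$\ell$.

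To conclude, the tuple $(P_k)_k$ corresponds to $\Pa \tensor 1$ under $\Da\tensor[\Q]\Qellbar \iso \prod_\sigma \Qellbar$, where the embedding $\sigma_k \colon \Da \hookrightarrow \Qellbar$ is $\iotaa \circ \theta_k|_\Da$. Applying $\theta_l \in \Gal(\Ka/\Q) = \ZnaZinv$ coefficient-wise to the trace formula (via $\iotaa \theta_l \iotaa^{-1}$ acting on $\iotaa(\Ka)$) yields $\theta_l(\chi_k) = \chi_{lk}$ and hence $\iotaa\theta_l\iotaa^{-1}(P_k) = P_{lk}$, which precisely expresses $(P_k)_k$ as the full $\Gal(\Da/\Q)$-orbit of the single element $\tilde{P} := \iotaa^{-1}(P_1) \in \Da[t]$ via $\sigma_k(\tilde{P}) = P_k$ for all~$k$. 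Base-changing $\tilde{P}$ back along $\Da\tensor[\Q]\Qell \hookrightarrow \Da\tensor[\Q]\Qellbar$ then recovers $\Pa$, so $\Pa = \tilde{P} \in \Da[t]$, and the $\ell$-independence is inherited from each $P_k$. The main obstacle I anticipate is making rigorous the starting observation that each $W^k$ depends only on the coset of~$k$ in $\ZnaZinv/\img\ka$: this is implicit in how \crllr{\ref{result:Mua.Qellbar}} is phrased but requires a concrete Mackey-type verification that $\Ind(lka \tensor \signature \tensor \omega^{u_{lka}}) \iso \Ind(ka \tensor \signature \tensor \omega^{u_{ka}})$ for $l \in \img\ka$, from which the $\Da$-valuedness of $\chi_k$ --- and thus the entire descent --- follows.
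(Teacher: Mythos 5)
Your argument is correct, but it follows a genuinely different route from the paper. You descend from $\Qellbar$: you split $\Waomega \tensor[\Q] \Qellbar$ into the absolutely irreducible constituents $W^k$ indexed by $\ZnaZinv/\img\ka$, observe that their characters are $\img\ka$-invariant and hence $\Da$-valued, apply the projector argument of \prpstn{\ref{result:Qagamma.ind.ell}} with these $\Da$-coefficient central idempotents together with \lmm{\ref{result:ind.ell:Hiprime}}, extract a $(\dim W^k)$-th root (a $\Da$-coefficient variant of \lmm{\ref{result:ind.ell.puiss=>ind.ell}}), and reassemble the tuple $(P_k)$ into a single element of $\Da[t]$ by Galois equivariance. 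The paper never leaves $\Qell$: it reduces, via Newton's identities and the non-degenerate pairing $(x,y)\mapsto\TrFext{\Da\tensor[\Q]\Qell}{\Qell}(xy)$, to the $\ell$-independence and rationality of $\traceh{d\va^r}{\Vaomega/\Qell}$ for $d \in \Da$, which is then proved in the general \prpstn{\ref{result:tr.va.ind.ell}} (valid for any smooth projective variety and any simple $\Q[\groupG]$-module) by averaging over $\groupG$ inside $\Hom_{\Q}(\Waomega,\HetladQell{i}{\Xbar})$ and invoking Katz--Messing. The paper's version buys generality and avoids both the root extraction and the Galois/embedding bookkeeping; yours buys explicitness, exhibiting the conjugate factors $P_k$ directly and making \prpstn{\ref{result:factorization.over.Da}} visible at the same time. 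The two points you would still have to nail down do work out: the coset-independence of $W^k$ is exactly what \prpstn{\ref{result:Mua.Qellbar}} encodes (conjugate the inducing data by $\sigma \in \SaQell$, using that $\SaQellbar$ is normal in $\SaQell$ and that $\usigma[\mkern-2mu]$-type cocycles transform by $\ka(\sigma)^{\pm1}$), and the identification $\sigma_k = \iotaa\circ\theta_k|_{\Da}$ of embeddings with components, which your final descent genuinely needs (an inverted labelling would break the argument), is the one dictated by \frml{\eqref{formule:mua.Qellbar}}, where $\Da \subset \Ka$ acts on the $v$-component of $\Ka\tensor[\Q]\Qellbar$ through $\theta_v$; with that convention your relation $\theta_l(\chi_k)=\chi_{lk}$ is consistent and the descent to $\Pa \in \Da[t]$, independent of $\ell$, is complete.
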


\begin{proof}
Let us recall that $\Pa(t) = \deth{1-t\va}{\Vaomega/\Da\tensor[\Q]\Qell}$. Using the same argument as in \lmm{\ref{lemme:det.ind.ell=tr.puiss.r.ind.ell}}, we only need to show the independence of $\ell$ of $\traceh{\va^r}{\Vaomega/\Da\tensor[\Q]\Qell}$ for every $r \in \N$.

As $(x,y) \mapsto \TrFext{\Da\tensor[\Q]\Qell}{\Qell}(xy)$ is a non-degenerate bilinear form, the independence of $\ell$ of $\traceh{\va^r}{\Vaomega/\Da\tensor[\Q]\Qell}$ is equivalent to that of the element $\traceh{d\va^r}{\Vaomega/\Qell} \in \Qell$ for all $d \in \Da$; indeed:
\begin{align*}
& \TrFext{\Da\tensor[\Q]\Qell}{\Qell}{(d\traceh{\va^r}{\Vaomega/\Da\tensor[\Q]\Qell})}\\
& \hspace{8em} = \TrFext{\Da\tensor[\Q]\Qell}{\Qell}{(\traceh{d\va^r}{\Vaomega/\Da\tensor[\Q]\Qell})} \\
& \hspace{8em} = \traceh{d\va^r}{\Vaomega/\Qell}.
\end{align*}
Because $d \va^r$ is the map $v \mapsto (\Frobcohom)^r \circ v \circ d$, thanks to \rmrk{\ref{remarque:Het->Hetprim}}, we only need to show the following proposition.
\end{proof}

\begin{proposition}\label{result:tr.va.ind.ell}
Let $X$ be a smooth projective variety over $\Fq$. Let $\groupG$ be a finite subgroup of $\Aut_{\Fq}(X/\Fq)$, $W$ a simple $\Q[\groupG]$-module, $\D$ (the opposite of) its endomorphism ring, and $i$ an integer $\geq 0$. Denote by $V$ the $\D \tensor[\Q] \Qell$-module $\Hom_{\Q[\groupG]}(W,\HetladQell{i}{\Xbar})$ and, given $d \in \D$ and $r \geq 1$, denote by $\alpha$ the endomorphism $v \mapsto (\Frobcohom)^r \circ v \circ d$ of the $\Qell$-vector space $V$. The trace of $\alpha$ is an element of $\Q$ which is independent of $\ell$.
\end{proposition}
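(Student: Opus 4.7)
The plan is to rewrite $\traceh{\alpha}{V}$ as a $\Q$-linear combination of traces to which the Katz-Messing integrality theorem applies. Since $W$ is a finite-dimensional $\Q$-vector space, there is a canonical $\groupG$-equivariant isomorphism of $\Qell$-vector spaces
\[
V = \Hom_{\Q[\groupG]}(W,\HetladQell{i}{\Xbar}) \iso \bigl(W^{\vee} \tensor[\Q] \HetladQell{i}{\Xbar}\bigr)^{\groupG},
\]
with $\groupG$ acting diagonally on the right-hand side. A direct check using the convention $(gw^{\vee})(w) = w^{\vee}(g^{-1}w)$ on the dual shows that, under this identification, the endomorphism $\alpha \colon v \mapsto \Frobcohom^{r} \circ v \circ d$ of $V$ is the restriction to $\groupG$-invariants of the endomorphism $d^{\vee} \tensor \Frobcohom^{r}$ of $W^{\vee} \tensor[\Q] \HetladQell{i}{\Xbar}$.

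This operator commutes with the diagonal $\groupG$-action: $d \in \End_{\Q[\groupG]}(W)$, so $d^{\vee}$ commutes with $\groupG$ on $W^{\vee}$; and $\Frobcohom$ commutes with each $g^{*}$ because $g$ is an $\Fq$-automorphism of $X$. The standard averaging identity $\traceh{u}{M^{\groupG}} = \frac{1}{\card{\groupG}} \sum_{g \in \groupG} \traceh{u \circ g}{M}$, valid for any $\groupG$-equivariant endomorphism $u$ of a finite-dimensional $\Qell[\groupG]$-module $M$ (with $\card{\groupG}$ invertible), combined with the fact that on the tensor product the trace factors as a product over the two factors, yields
\[
\traceh{\alpha}{V} = \frac{1}{\card{\groupG}} \sum_{g \in \groupG} \traceh{g^{-1} \circ d}{W} \cdot \traceh{(g \circ \Frob^{r})^{*}}{\HetladQell{i}{\Xbar}},
\]
where I have used $\traceh{d^{\vee} \circ g}{W^{\vee}} = \traceh{g^{-1} \circ d}{W}$ and $\Frobcohom^{r} \circ g^{*} = (g \circ \Frob^{r})^{*}$ (which holds because $g$ and $\Frob$ commute, $g$ being defined over $\Fq$).

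Both factors are now easily analysed. The first, $\traceh{g^{-1} \circ d}{W}$, is the trace of a $\Q$-linear endomorphism of a finite-dimensional $\Q$-vector space, hence a rational number independent of $\ell$. The second, $\traceh{(g \circ \Frob^{r})^{*}}{\HetladQell{i}{\Xbar}}$, is the trace on a single $\ell$-adic cohomology group of a smooth projective variety of an algebraic endomorphism $g \circ \Frob^{r}$ of $X$, so by \cite[\thrm{2.2}, \pg{76}]{Katz.Messing} it is an integer independent of $\ell$. The conclusion that $\traceh{\alpha}{V}$ lies in $\Q$ and is independent of $\ell$ follows immediately. The step requiring careful bookkeeping, and where an error would most easily slip in, is the initial identification of $\alpha$ with $d^{\vee} \tensor \Frobcohom^{r}$ via the isomorphism $V \iso (W^{\vee} \tensor[\Q] \HetladQell{i}{\Xbar})^{\groupG}$: once the duality conventions for the $\groupG$-action on $W^{\vee}$ and for turning right composition by $d$ into a tensor factor are pinned down, the remainder is just the averaging trick combined with Katz-Messing.
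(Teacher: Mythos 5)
Your proof is correct and follows essentially the same route as the paper: average over $\groupG$ to realize the trace on the invariant subspace $V$, and reduce to traces of $(\Frobcohom)^r \circ g^*$ on $\HetladQell{i}{\Xbar}$, which are rational and independent of $\ell$ by Katz--Messing. The only difference is bookkeeping: you package $\Hom_{\Q}(W,\HetladQell{i}{\Xbar})$ as $W^{\vee}\tensor[\Q]\HetladQell{i}{\Xbar}$ and use multiplicativity of the trace on the tensor product, whereas the paper chooses a $\Q$-basis of $W$ and computes the trace of $v \mapsto (\Frobcohom)^r \circ g^* \circ v \circ f$ directly.
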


\begin{proof}
Denote by $E$ the $\Qell$-vector space $\Hom_{\Q}(W,\HetladQell{i}{\Xbar})$, the action of $\groupG$ on $E$ being $g \cdot v = g^* \circ v \circ g_W^{-1}$ where $g_W$ is the endomorphism of the $\Q$-vector space $W$ induced by $g$. Let $\pi$ be the $\Qell$-linear map from $E$ to itself defined by
\[\pi(v) = \frac{1}{\card{\groupG}} \sum_{g \in \groupG}{g^* \circ v \circ g_W^{-1}}.\]
It is a projection with image $E^G = V$. The map $\beta \colon v \mapsto (\Frobcohom)^r \circ v \circ d$ is an endomorphism of the $\Qell$-vector space $E$ which stabilizes $V$; the endomorphism of  $V$ induced by $\beta$ is $\alpha$ and, because $\pi$ is a projection of $E$ onto $V$, we have
\[\trace(\alpha) = \trace(\pi \circ \beta),\]
where the endomorphism $\pi \circ \beta$ can be written as
\[v \mapsto \sum_{i\in I}{(\Frobcohom)^r \circ g_i^* \circ v \circ f_i,}\]
with $I$ a finite set, $g_i$ some elements of $\groupG$ and $f_i$ some endomorphisms of the $\Q$-vector space $W$, each of them independent of $\ell$. We thus only need to show the following lemma.
\end{proof}

\begin{lemma}
We keep the notations of the previous proposition. If $g \in \groupG$, $f \in \End_{\Q}(W)$ and $r \in \N^*$, then the trace of
\[v \mapsto (\Frobcohom)^r \circ g^* \circ v \circ f\]
considered as an endomorphism of $V$ is an element of $\Q$ independent of $\ell$.
\end{lemma}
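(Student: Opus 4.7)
The plan is to exploit the tensor-product structure of $E = \Hom_{\Q}(W,\HetladQell{i}{\Xbar})$ in order to factor the trace, and then invoke the purity theorem of Deligne together with the Grothendieck--Lefschetz trace formula to handle the surviving cohomological factor.

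First, since $W$ is finite-dimensional over $\Q$, the canonical map $W^\vee \otimes_{\Q} \HetladQell{i}{\Xbar} \to \Hom_{\Q}(W,\HetladQell{i}{\Xbar}) = E$ is a $\Qell$-linear isomorphism. Setting $\Phi = (\Frobcohom)^r \circ g^*$, the endomorphism $v \mapsto \Phi \circ v \circ f$ of $E$ corresponds under this isomorphism to $f^\vee \otimes \Phi$, where $f^\vee$ is the transpose of $f$ on $W^\vee$. Consequently its trace on $E$ factors as
\[
\trace\bigl(v \mapsto \Phi \circ v \circ f\bigr) \;=\; \trace(f \mid W) \cdot \trace\bigl(\Phi \mid \HetladQell{i}{\Xbar}\bigr).
\]
The first factor is the trace of a $\Q$-linear endomorphism of a $\Q$-vector space, hence an element of $\Q$ that is manifestly independent of $\ell$. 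It thus suffices to prove that $\trace(\Phi \mid \HetladQell{i}{\Xbar})$ lies in $\Q$ and is independent of $\ell$.

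Because $g \in \groupG \subset \Aut_{\Fq}(X/\Fq)$ is defined over $\Fq$, it commutes with the geometric Frobenius, so $\Phi = (g \circ \Frob^r)^* = (\Frob^r \circ g)^*$. Applying Theorem~\ref{theorem:trace.lefschetz:EP} to the $\Fq$-rational endomorphism $g \circ \Frob^{r}$ (whose order considerations are standard since its powers are $g^m \Frob^{rm}$), one obtains, for every $m \geq 1$,
\[
\sum_{i=0}^{2\dim X}(-1)^i \trace\bigl((g\circ\Frob^{rm})^* \mid \HetladQell{i}{\Xbar}\bigr) \;=\; \eulerpoincarecaract(\Xbar^{g\Frob^{rm}}) \in \Z,
\]
an integer that is independent of $\ell$.

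To isolate the contribution of each individual $i$, invoke Deligne's purity theorem: on the smooth projective variety $\Xbar$, every eigenvalue of $\Frobcohom$ on $\HetladQell{i}{\Xbar}$ is an algebraic number all of whose complex absolute values equal $q^{i/2}$. Since $g^*$ has finite order and commutes with $\Frobcohom$, the eigenvalues of $\Phi = (g\Frob^r)^*$ on $\HetladQell{i}{\Xbar}$ are products $\zeta_{i,j}\lambda_{i,j}^{r}$ with $\zeta_{i,j}$ a root of unity and $\lambda_{i,j}$ an eigenvalue of $\Frobcohom$ on $H^i$, all of absolute value $q^{ri/2}$. Hence the contributions of distinct $i$ to the Lefschetz sums (as $m$ varies) have distinct exponential growth rates $q^{rim/2}$, and a standard separation argument (partial-fractions on the associated generating series, exactly as in the proof of \cite[\thrm{2.2}]{Katz.Messing}) recovers each $\trace\bigl((g\Frob^{rm})^* \mid \HetladQell{i}{\Xbar}\bigr)$ as a combination of the integers $\eulerpoincarecaract(\Xbar^{g\Frob^{rm}})$; in particular every such trace is an element of $\Q$ independent of $\ell$. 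Applying this with $m=1$ yields the lemma.

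The main obstacle is the last step: one must verify carefully that the spectral separation by weight applies to the operator $(g\Frob^r)^*$ rather than to $\Frobcohom$ alone. This is essentially the content of the extension of Katz--Messing from the pure Frobenius to compositions with commuting rational automorphisms, and it is here that Deligne's purity is indispensable.
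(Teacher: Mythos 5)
Your reduction step is exactly the paper's: the paper chooses a $\Q$-basis $(e_1,\dots,e_k)$ of $W$, identifies the Hom space with $\HetladQell{i}{\Xbar}^k$, and obtains the same factorization $\trace = \trace(f)\cdot\traceh{(\Frobcohom)^r\circ g^*}{\HetladQell{i}{\Xbar}}$ that you get from the tensor decomposition $W^\vee\tensor[\Q]\HetladQell{i}{\Xbar}$; the two computations are identical in content. (Like the paper's own proof, you compute this trace on the full space $\Hom_{\Q}(W,\HetladQell{i}{\Xbar})$ rather than on the $\groupG$-equivariant subspace, which is indeed what the preceding proposition requires, since there the trace is taken of $\pi\circ\beta$ on $E$.)

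Where you diverge is the remaining cohomological factor. The paper simply cites Katz--Messing, Theorem 2.2, which asserts precisely that $\traceh{(\Frobcohom)^r\circ g^*}{\HetladQell{i}{\Xbar}}$ is rational and independent of $\ell$ when $g$ is an $\Fq$-automorphism of the smooth projective $X$ and $r\geq 1$; you instead sketch a proof of that statement, and your sketch is essentially Katz--Messing's own argument (Lefschetz counts for $g\circ\Frob^{rm}$, purity, separation of the $H^i$ by weight, plus a Galois-invariance step for rationality that you leave implicit but which the strong form of purity you quote does supply). The one concrete flaw is the counting step: \thrm{\ref{theorem:trace.lefschetz:EP}} applies only to automorphisms of finite order prime to $p$, and $g\circ\Frob^{rm}$ is neither of finite order nor even an automorphism of $\Xbar$ (Frobenius is purely inseparable of degree $q^{\dim X}$), so no ``order considerations'' make that theorem apply. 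What you need there is the Grothendieck--Lefschetz trace formula for the Frobenius correspondence twisted by a commuting automorphism (Rapport in SGA~4.5, or SGA~5), giving $\sum_i(-1)^i\traceh{(\Frobcohom)^{rm}\circ g^*}{\HetladQell{i}{\Xbar}}=\eulerpoincarecaract(\Xbar^{\,g\Frob^{rm}})\in\Z$. With that substitution your weight-separation argument goes through (the eigenvalues of $g^*$ are roots of unity because $\groupG$ is finite, so the growth rates $q^{rim/2}$ do separate the $i$), and you will have in effect reproved the theorem the paper quotes; citing Katz--Messing directly, as the paper does, is shorter and delivers rationality and $\ell$-independence in one stroke.
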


\begin{proof}
Let $(e_1,\dots,e_k)$ be a basis of $W$ over $\Q$; the map
\[v \mapsto (v(e_1),\dots,v(e_k))\]
is an isomorphism of the $\Qell$-vector space $V$ onto the $\Qell$-vector space $\HetladQell{i}{\Xbar}^k$. It sends the endomorphism of $V$ given by
\[v \mapsto (\Frobcohom)^r \circ g^* \circ v \circ f\]
to the endomorphism of $\HetladQell{i}{\Xbar}^k$ given by
\[(h_1,\dots,h_k) \mapsto \biggl(\mathinner{}\sum_{i=1}^{k}{a_{i,j}((\Frobcohom)^r \circ g^*)(h_i)}\biggr)_{1 \leq j \leq k},\]
where $(a_{i,j})_{1 \leq i,j \leq k}$ is the matrix of $f$ in the basis $(e_i)_{1 \leq i \leq k}$. Its trace is thus equal to
\[\biggl(\mathinner{}\sum_{i=1}^{k}{a_{i,i}}\biggr) \traceh{(\Frobcohom)^r \circ g^*}{\HetladQell{i}{\Xbar}}.\]
By \cite[\thrm{2.2}, \pg{76}]{Katz.Messing}, it is independent of $\ell$.
\end{proof}

\begin{remark}\label{remarque:Het->Hetprim}
In the previous lemma and proposition, it is possible, when $X$ is a hypersurface, to replace $\HetladQell{n-2}{\Xbar}$ by $\HetladQellprim{n-2}{\Xbar}$ using \lmm{\ref{result:ind.ell:Hiprime}} instead of \cite[\thrm{2.2}, \pg{76}]{Katz.Messing} (indeed, $\Frobcohom$ and each $g^*$, with $g \in G$, extend to $\varproj{n-1}$).
\end{remark}

\subsection{Examples}\label{subsection:exemples}

In this \subsctn{\ref{subsection:exemples}}, we detail the computations for the cases $n=3$, $n=4$, $n=5$, and $n=7$. In all these examples, we use the fact that, when $n$ is prime and $a \neq [0,1,2,\dots,n-1]$, we have $\omega = 1$ and $\da = 1$, hence $\mprimea = \ma$ and $\gammaa/\da = \gammaa$. Let us recall that the degree of $\Qa$ is $(\deg \Pa) [\Da:\Q] = \mprimea\frac{\eulerphi(\na)}{\card{\img\ka}}$. In the tables, the lines appear by decreasing values of $\ma$.

\begin{example}[$n=3$]\label{example:n=3}
This is the simplest non-trivial case. The elements of $\cargroupA$ are, up to permutation, $[0,0,0]$ and $[0,1,2]$. The multiplicity of the latter is zero so only $[0,0,0]$ gives rise to a factor in the zeta function. This factor has degree $\mprimea = 2$ and appears with a power $\gammaa/\da = \gammaa = 1$, so
\[\funzeta{/\Fq}{t} = \frac{\Qaex{[0,0,0]}{1}(t)}{(1-t)(1-qt)}, \qquad \text{with} \quad \deg \Qaex{[0,0,0]}{1}(t) = 2.\]
In fact, in this case, $\dworkhypersurfaceFqbar$ is an elliptic curve, so the previous result doesn't give any new information.
\end{example}

\begin{example}[$n=4$]\label{example:n=4}
Here is a list of the elements of $\cargroupA$ mod the simultaneous actions of $\Sn$ and $\ZnZinv$
\begin{center}\begin{tabular}{c|c|c|c|c}
class of $\classZnZinv{a}$ & $\deg \Qa$ & $\gammaa/\da$ & $\Da$ & $\omega$ \\
\hline
$[0,0,0,0]$ & $3$ &  $1$ & $\Q$ & $1$\\
\hline
$[0,0,2,2]$ & $1$ &  $3$ & $\Q$ & $\pm 1$\\
\hline
$[0,0,1,3]$ & $1$ & $12$ & $\Q$ & $1$ \\
\end{tabular}\end{center}
Consequently, we have the following factorization of the zeta function:
\begin{align*}
\funzeta{/\Fq}{t}
& = \frac{1}{(1-t)(1-qt)(1-q^2t)} \\
& \quad\hspace{2em}\times \frac{1}{\Qaex{[0,0,0,0]}{1}(t) \Qaex{[0,0,2,2]}{1}(t)^3 \Qaex{[0,0,2,2]}{-1}(t)^3 \Qaex{[0,0,1,3]}{1}(t)^{12}}.
\end{align*}
This result is in accordance with the numerical observations of \cite[\sctn{6.1.1}, \pgs{112}{116}]{Kadir.thesis}; let us note that, according to her tables for $q = p = 13$, $17$, $29$, $37$, $41$ (we remind the reader that only the cases $q \equiv 1 \mod 4$ fall in the framework of our study) and $\parameter = 2$, $3$, $2$, $2$, $2$ respectively, we have $\set{\Qaex{[0,0,2,2]}{1}(t),\allowbreak\Qaex{[0,0,2,2]}{-1}(t)} = \set{1-pt,1+pt}$, hence the two polynomials $\Qaex{[0,0,2,2]}{1}$ and $\Qaex{[0,0,2,2]}{-1}$ are not 
generally equal.

This example also illustrate the fact that our method gives a slightly finer factorization than that of \cite{Kloosterman}: instead of finding a factor $\Raex{[0,0,2,2]}^3$ with $\Raex{[0,0,2,2]}$ of degree $2$, we find a factor $\Qaex{[0,0,2,2]}{1}(t)^3 \Qaex{[0,0,2,2]}{-1}(t)^3$ with $\Qaex{[0,0,2,2]}{1}$ and $\Qaex{[0,0,2,2]}{-1}$ of degree $1$; thus, Kloosterman's polynomial $\Raex{[0,0,2,2]}$ factors over $\Q$ as a product of two polynomials of degree $1$.
\end{example}

\begin{example}[Cas $n=5$]\label{example:n=5}
Here are all the elements of $\cargroupA$ (mod the simultaneous actions of $\Sn$ and $\ZnZinv$) which intervene in the zeta function:
\begin{center}\begin{tabular}{c|c|c|c}
class of $\classZnZinv{a}$ & $\deg \Qa[1]$ & $\gammaa/\da$ & $\Da$ \\
\hline
$[0,0,0,0,0]$ & $4$ &  $1$ & $\Q$ \\
\hline
$[0,0,0,1,4]$ & $4$ & $20$ & $\Q(\sqrt{5})$ \\
$[0,0,1,1,3]$ & $4$ & $30$ & $\Q(\sqrt{5})$ \\
\end{tabular}\end{center}
We can thus write:
\[\funzeta{/\Fq}{t} = \frac{\Qaex{[0,0,0,0,0]}{1}(t) \Qaex{[0,0,0,1,4]}{1}(t)^{20} \Qaex{[0,0,1,1,3]}{1}(t)^{30}} {(1-t)(1-qt)(1-q^2t)(1-q^3t)}.\]
Moreover, the polynomials $\Qaex{[0,0,0,1,4]}{1}$ and $\Qaex{[0,0,1,1,2]}{1}$ factor over $\Da = \Q(\sqrt{5})$ into a product of two polynomials of degree $2$ (namely, the corresponding $\Paex{a}{1}$ and its conjugate over $\Q(\sqrt{5})$).

We thus recover (and explain) the numerical observation that Candelas, de la Ossa and Rodriguez-Villegas made in \cite[\tbl{12.1}, \pg{133}]{CdlORV.II}\footnote{As mentioned in the introduction, they only make this observation in the case $\parameter=0$, but their numerical data supports it when $\parameter\neq 0$ and $q \equiv 1 \mod 5$.}.
\end{example}

\begin{example}[Cas $n=7$]\label{example:n=7}
The elements of $\cargroupA$ mod the simultaneous actions of $\Sn$ and $\ZnZinv$ are those given in \exmpl{\ref{sous-exemple:calcul.Da:n=7}} page~\pageref{sous-exemple:calcul.Da:n=7}. We complete the list with the useful informations concerning the factorization of the zeta function.
\begin{center}\begin{tabular}{c|c|c|c}
class of $\classZnZinv{a}$ & $\deg \Qa[1]$ & $\gammaa/\da$ & $\Da$ \\
\hline
$[0,0,0,0,0,0,0]$ &  $6$ &    $1$ & $\Q$ \\
\hline
$[0,0,0,0,0,1,6]$ & $12$ &   $42$ & $\Q(\unityroots{7})^+$ \\
$[0,0,0,0,1,1,5]$ & $24$ &  $105$ & $\Q(\unityroots{7})$ \\
$[0,0,0,1,1,1,4]$ & $12$ &  $140$ & $\Q(\unityroots{7})^+$ \\
$[0,0,0,1,1,6,6]$ & $12$ &  $210$ & $\Q(\unityroots{7})^+$ \\
\hline
$[0,0,0,0,1,2,4]$ &  $6$ &  $210$ & $\Q(\sqrt{-7})$ \\
$[0,0,0,1,1,2,3]$ & $18$ &  $420$ & $\Q(\unityroots{7})$ \\
$[0,0,1,1,3,3,6]$ &  $6$ &  $630$ & $\Q(\sqrt{-7})$ \\
\hline
$[0,0,0,1,2,5,6]$ &  $6$ &  $840$ & $\Q(\unityroots{7})^+$ \\
$[0,0,1,1,3,4,5]$ &  $6$ & $1260$ & $\Q(\unityroots{7})^+$ \\
$[0,0,1,1,2,4,6]$ &  $6$ & $1260$ & $\Q(\unityroots{7})^+$ \\
\end{tabular}\end{center}
As in the preceding cases, from this table, we can easily describe the factorization of the zeta function in the case $n=7$.
\end{example}

\appendix
\section*{Acknowledgments}

I would like to thank my thesis advisor, Joseph Oesterl\'e, for sharing his ideas with me and for the numerous improvements he suggested to the text of the present article. I would also like to thank Luc Illusie for a helpful reference concerning \thrm{\ref{theorem:trace.lefschetz:EP}} as well as Julien Grivaux for his elegant proof of \lmm{\ref{lemme:h*surPn}}.

\section{List of notations}\label{appendix:notations}

\renewcommand*{\pg}[1]{p.~#1}
\begin{center}\begin{longtable}{@{}lp{\textwidth-4.5cm}l}

\multicolumn{3}{@{}l}{\textbf{General notations}} \\*

$\card{E}$ & number of elements of $E$ & \\*


$\Fq$ & finite field with $q$ elements \\

$\Qell$ & field of $\ell$-adic numbers \\

$\overline{\K}$ & algebraic closure of the field $\K$ \\

$\unityroots{n}(\fieldk)$ & set of \nth{$n$} roots of unity belonging to the field $\fieldk$ & \\

$\eulerphi$ & Euler totient function & \\

$\Sn$ & permutation group of $\set{1,\dots,n}$ & \\

$\signature$ & signature (of a permutation) & \\

$\Ind_{H}^{G} \mu$ & representation of $G$ induced by the representation $\mu$ of $H$ \\

$\integerinterval{1}{n}$ & set of integers $k$ satisfying $1 \leq k \leq n$ \\*
\\
%
%
\multicolumn{3}{@{}l}{\textbf{Notations from the \hyperref[section:introduction]{introduction}}} \\*

$\parameter$ & parameter belonging to $\Fqnonzero$ & \pg{\pageref{definition:parameter}} \\*

$\delta_i$ & $\delta_i = 0$ if $i$ is even and $\delta_i = 1$ if $i$ is odd & \pg{\pageref{definition:delta.i}} \\

$\groupA$ & group $\setst{(\zeta_1,\dots,\zeta_n)\in \unityroots{n}(\Fq)^n} {\zeta_1\dots\zeta_n=1}$ quotiented by $\set{(\zeta,\dots,\zeta)}$; is isomorphic to $(\ZnZ)^{n-2}$ & \pg{\pageref{definition:groupA}} \\

$\cargroupA$ & group $\setst{(a_1,\dots,a_n)\in(\ZnZ)^n} {a_1+\dots+a_n=0}$ quotiented by the diagonal $\set{(a,\dots,a)}$; can be identified with the group of characters of $\groupA$ & \pg{\pageref{definition:cargroupA}} \\

$[\zeta_1,\dots,\zeta_n]$ & element of $\groupA$ & \pg{\pageref{definition:elt.groupA}}\\

$[a_1,\dots,a_n]$ & element of $\cargroupA$ & \pg{\pageref{definition:elt.cargroupA}}\\

$\groupG$ & group $\groupA \rtimes \Sn$ & \pg{\pageref{definition:groupG}}\\*
\\
%
%
\multicolumn{3}{@{}l}{\textbf{Notations from \sctn{\ref{section:preliminaries}}}} \\*

$X^f$ & subscheme of fixed point of an automorphism $f$ of $X$ & \pg{\pageref{definition:Xf}} \\*

$\eulerpoincarecaract(X)$ & Euler\namedash Poincar\'e characteristic of a scheme $X$ & \pg{\pageref{definition:euler-poincare.caract}} \\

$\HetladQellinprim{n-2}{X}$ & non-primitive part of the cohomology of a hypersurface of dimension $n-2$; is zero when the dimension is odd & \pg{\pageref{definition:cohom.et.inprim}} \\

$\HetladQellprim{n-2}{X}$ & primitive part of the cohomology of a hypersurface of dimension $n-2$ & \pg{\pageref{definition:cohom.et.prim}} \\*
\\
%
%
\multicolumn{3}{@{}l}{\textbf{Notations from \sctn{\ref{section:Qellbar[A]}}.}} \\*

$k(\zeta)$ & number of $i \in \set{1,\dots,n}$ such that $\zeta_i = \zeta$ & \pg{\pageref{definition:kzeta}}\\

$\ma$ & multiplicity of the character $a$ in the $\Qellbar[\groupA]$-module $\HetladQellbarprim{n-2}{\dworkhypersurfaceFqbar}$ & \pg{\pageref{definition:ma}}\\*
\\
%
%
\multicolumn{3}{@{}l}{\textbf{Notations from \sctn{\ref{section:Qellbar[G]}}.}} \\

$\isotypiccomponentQellbar{a}$ & $a$-isotypic component of the $\Qellbar[\groupA]$-module $\HetladQellbarprim{n-2}{X}$; its dimension is $\ma$ & \pg{\pageref{definition:composantisotypiqueQellbar.a}}\\

$\stabilizerGQellbar{a}$ & stabilizer of $a$ in $\groupG$ & \pg{\pageref{definition:GaQellbar}}\\

$\classSn{a}$ & orbit of $a \in \cargroupA$ under $\Sn$ & \pg{\pageref{definition:classeSna}}\\

$R$ & representative set $\subset \cargroupA$ of the elements of $\Sn \backslash \cargroupA$ & \pg{\pageref{definition:sysclasseSna}}\\
$\SaQellbar$ & stabilizer of $a$ in $\Sn$ & \pg{\pageref{definition:SaQellbar}}\\

$\nprimea$ & generator $\in \integerinterval{1}{n}$ of the set of elements $j \in \ZnZ$ such that $(a_1+j,\allowbreak\dots,\allowbreak a_n+j)$ is a permutation of $(a_1,\dots,a_n)$& \pg{\pageref{definition:nprimea}} \\

$\da$ & integer equal to $n/\nprimea$ & \pg{\pageref{definition:da}} \\

$I(b)$ & set of $i \in \integerinterval{1}{n}$ such that $a_i = b$ & \pg{\pageref{definition:I(b)}}\\

$\genSigmaQellbar$ & element of $\SaQellbar$ belonging to the preimage of a generator of the cyclic group $\SaQellbar/\SaQellbarprime$ & \pg{\pageref{definition:genSigmaQellbar}}\\

$\SaQellbarprime$ & stabilizer in $\Sn$ of a representative $(a_1,\dots,a_n)$ of $a$ in $(\ZnZ)^n$ & \pg{\pageref{definition:SaQellbarprime}}\\

$\gammaa$ & number of permutations of $(a_1,\dots,a_n)$; equal to $[\Sn:\SaQellbarprime]$ & \pg{\pageref{definition:gammaa}}\\

$\SigmaQellbar$ & group generated by $\genSigmaQellbar$; we have $\SaQellbar = \SaQellbarprime \rtimes \SigmaQellbar$ & \pg{\pageref{definition:SigmaQellbar}}\\

$\ja$ & group homomorphism $\SaQellbar \to \nprimea\ZnZ$ defined by ${}^s (a_1,\dots,a_n) = ({a_1+\ja(s)},\allowbreak\dots,a_n+\ja(s))$; satisfies $\ja[ka] = k\ja$ & \pg{\pageref{definition:ja}}\\

$\cargroupAsigma$ & set of elements of $\cargroupA$ fixed by $\sigma \in \Sn$ & \pg{\pageref{definition:cargroupA.fixe.s}}\\

$O_j$ & orbits of a product of $n'$ disjoint cycles of length $d$ & \pg{\pageref{definition:orbites}}\\

$k(\zeta)$ & number of $j \in \set{1,\dots,n'}$ such that $\prod_{i \in O_j}{\zeta_i} = \zeta$; this notation generalizes that from~\pg{\pageref{definition:kzeta}} & \pg{\pageref{definition:kzeta.nprime}}\\

$\mprimea$ & $\mprimea = \ma/\da$ & \pg{\pageref{definition:mprimea}} \\
$\reg$ & regular representation of $\SaQellbar/\SaQellbarprime$ & \pg{\pageref{definition:reg}}\\*
\\
%
%
\multicolumn{3}{@{}l}{\textbf{Notations from \sctn{\ref{section:Qell[G]}}}} \\*
$\fieldcyclgroup{C}$ & cyclotomic field attached to a cyclic group $C$ & \pg{\pageref{definition:corpsgpecycl}}\\*

$\carcyclgroup{C}$ & canonical character of a cyclic group $C$; takes its values in $\fieldcyclgroup{C}$ & \pg{\pageref{definition:cargpecycl}}\\

$\classZnZinv{a}$ & class mod $\ZnZinv$ of $a$ & \pg{\pageref{definition:classeZnZinv}} \\

$\imgclassZnZinv{a}$ & image of the homomorphism $[\zeta_1,\dots,\zeta_n] \mapsto \zeta_1^{a_1}\dots\zeta_n^{a_n}$ & \pg{\pageref{definition:Ea}} \\

$\kerclassZnZinv{a}$ & kernel of the homomorphism $[\zeta_1,\dots,\zeta_n] \mapsto \zeta_1^{a_1}\dots\zeta_n^{a_n}$ & \pg{\pageref{definition:Na}} \\

$\na$& order of $a$ in $\cargroupA$; equal to the order of the group generated by $a_i-a_{i'}$; also equal to the number of elements of the image of the character $a$ & \pg{\pageref{definition:na}} \\

$\fieldcyclgroupZnZinv{a}$ & cyclotomic field attached to the cyclic group $\groupA/\kerclassZnZinv{a}$; its dimension over $\Q$ is $\eulerphi(\na)$; only depends on $\classZnZinv{a}$ & \pg{\pageref{definition:corpsKa}}\\

$\carcyclgroupZnZinv{a}$ & canonical character of the cyclic group $\groupA/\kerclassZnZinv{a}$ considered as a character of $\groupA$; takes values in $\fieldcyclgroupZnZinv{a}$ and satisfies $\carcyclgroupZnZinv{ka} = \carcyclgroupZnZinv{a}^k$ & \pg{\pageref{definition:cara}}\\

$\fa$ & generator of the group generated by $a_i-a_{i'}$; satisfies $\nprimea = \ea \fa$, $n = \ea \fa \da$ and $n = \na \fa$ & \pg{\pageref{definition:fa}} \\

$\SaQell$ & fixator of $\classZnZinv{a}$ in $\symmetricgroup_n$ & \pg{\pageref{definition:SaQell}} \\

$\ka$ & group homomorphism $\SaQell \to \ZnaZinv$ defined by $\sigmaa = \ka(\sigma)a$; only depends on $\classZnZinv{a}$ & \pg{\pageref{definition:ka}} \\

$\ea$ & integer such that $\nprimea = \ea \fa$; satisfies $\na = \ea \da$ and $n = \ea \fa \da$ & \pg{\pageref{definition:ea}} \\

$(\usigma,\vsigma)$ & if $\sigma \in \SaQell$, unique pair $(\usigma,\vsigma) \in \ZnaZ \times \ZnaZinv$ such that $a_{\sigma(i)} = \vsigma a_i + \usigma \fa$ & \pg{\pageref{definition:u.v}} \\

$\phi$ & group homomorphism $\SaQell \to \ZnaZ \rtimes \ZnaZinv$, $\sigma \mapsto (\usigma,\vsigma)$; we have $\vsigma = \ka(\sigma)$ and $\fa\usigma = \ja(\sigma)$ & \pg{\pageref{definition:phi.sigma.u.v}} \\

$\theta_{v}$ & automorphism of the field $\Ka$ sending the \nth{$\na$} roots of unity to their \nth{$v$} power & \pg{\pageref{definition:morphisme.theta.v}} \\

$\omega$ & \nth{$\na$} root of unity & \pg{\pageref{definition:omega}} \\

$\mua$ & representation $(\zeta,\sigma) \mapsto \cara(\zeta)\signature(\sigma)\omega^{\usigma}\theta_{\vsigma}$ of $\groupA \rtimes \SaQell$ in $\Ka$ & \pg{\pageref{definition:mu.a.omega}} \\

$\Mua$ & $\Q[\groupA \rtimes \SaQell]$-module $\Ka$ given by $\mua$; up to isomorphism, only depends on $\omega^{\ea}$, not on $\omega$ & \pg{\pageref{definition:Mua}} \\

$\Waomega$ & $\Q[\groupG]$-module simple $\Ind_{\groupA \rtimes \SaQell}^{\groupG}{\Mua}$ & \pg{\pageref{definition:Wa}} \\

$\Da$ & (opposite of the) endomorphism ring of $\Waomega$~; we have $\Da \subset \Ka$ (hence $\Da$ is commutative) and $\dim_{\Q} \Da = \frac{\eulerphi(\na)}{\card{\img\ka}}$ & \pg{\pageref{definition:Da}} \\*
\\
%
%
\multicolumn{3}{@{}l}{\textbf{Notations from \sctn{\ref{section:fact.zeta}}}} \\*

$\Vaomega$ & $\Hom_{\Q[\groupG]}(\Waomega,\HetladQellprim{n-2}{\dworkhypersurfaceFqbar})$; is a free $\Da \tensor[\Q] \Qell$-module of rank $\mprimea$; $\Waomega \tensor[\Da] \Vaomega$ identifies with the $\Waomega$-isotypic component $\isotypiccomponentQell{a}$ of the $\Q[\groupG]$-module $\HetladQellprim{n-2}{\dworkhypersurfaceFqbar}$ & \pg{\pageref{definition:Va}}\\*

$\isotypiccomponentQell{a}$ & $\Waomega$-isotypic component of the $\Q[G]$-module $\HetladQellprim{n-2}{\dworkhypersurfaceFqbar}$; is isomorphic to $\Waomega \tensor[\Da] \Vaomega$ & \pg{\pageref{definition:Ha}}\\

$\va$ & endomorphism of the $\Da \tensor[\Q] \Qell$-module $\Vaomega$ such that $\Frobcohom|\Waomega \tensor[\Da] \Vaomega = \Id \tensor \va$ & \pg{\pageref{definition:va}} \\

$\Pa$ & polynomial $\deth{1-t\va}{\Vaomega/\Da\tensor[\Q]\Qell}$ having degree $\mprimea$; has coefficients in $\Da$ and is independent of~$\ell$ & \pg{\pageref{definition:Pa}} \\

$\Qa$ & polynomial $\NormFext{\Da\tensor \Qell[t]}{\Qell[t]}(\Pa(t))$ having degree $\mprimea\frac{\eulerphi(\na)}{\card{\img\ka}}$ and coefficients in $\Q$; is independent of $\ell$ & \pg{\pageref{definition:Qa}}
\end{longtable}\end{center}

\section{Formulas}\label{appendix:formulas}

Here is a list of the most important formulas established throughout this article.

$\phantombigstrut n = \nprimea \da = \ea \fa \da = \na\fa$, $\nprimea = \ea \fa$, and $\na = \ea \da$.

$\phantombigstrut[\Sn : \SaQellbarprime] = \gammaa$ (number of permutations of $(a_1,\dots,a_n)$)

$\phantombigstrut[\Sn : \SaQellbar] = \frac{\gammaa}{\da}$

$\phantombigstrut[\Sn : \SaQell] = \frac{\gammaa}{\card{(\img\ka)}\da}$

$\phantombigstrut[\SaQellbar : \SaQellbarprime] = \da$

$\phantombigstrut[\SaQell : \SaQellbar] = \card{\img\ka}$\qquad(in fact, $\SaQell/\SaQellbar = \img\ka$)

$\phantombigstrut[\SaQell : \SaQellbarprime] = \da \card{\img\ka}$

$\phantombigstrut\dim \isotypiccomponentQellbar{a} = \ma$

$\phantombigstrut\dim \mua = \dim \Mua = \dim \Ka = \eulerphi(\na)$

$\phantombigstrut\dim \Mua^{\ma '} = \ma ' \eulerphi(\na)$

$\phantombigstrut\dim \Waomega^{\mprimea} = \dim \Ind_{\groupA \rtimes \SaQell}^{\groupG}{\Mua^{\ma '}} = \ma ' \eulerphi(\na)[\Sn : \SaQell] = \ma '\frac{\eulerphi(\na)}{\card{\img\ka}}\frac{\gammaa}{\da}$

$\phantombigstrut\dim \bigoplus_{\eta \in \unityroots{\da}(\Ka)}{\Ind_{\groupA \rtimes \SaQell}^{\groupG}{\Muaomegaeta^{\ma '}}} = \ma\frac{\eulerphi(\na)}{\card{\img\ka}}\frac{\gammaa}{\da}$.

$\phantombigstrut\dim_{\Q}{\Da} = \frac{\eulerphi(\na)}{\card{\img\ka}}$

$\phantombigstrut\dim_\Q(\Waomega) = \frac{\eulerphi(\na)}{\card{\img\ka}} \frac{\gammaa}{\da} = [\Sn:\SaQellbar][\Da:\Q]$.

$\phantombigstrut\dim_{\Da}(\Vaomega) = \mprimea$.

$\phantombigstrut\dim_{\Q}(\isotypiccomponentQell{a}) = \mprimea \frac{\eulerphi(\na)}{\card{\img\ka}} \frac{\gammaa}{\da} = \mprimea{} [\Sn:\SaQellbar][\Da:\Q]$

$\phantombigstrut\begin{aligned} \dim_{\Qell}(\HetladQellprim{n-2}{\dworkhypersurfaceFqbar})
& = \sum_{a \in \ZnZinv \times \Sn \backslash \cargroupA}{\mprimea \frac{\eulerphi(\na)}{\card{\img\ka}}\gammaa} = \sum_{a \in \ZnZinv \times \Sn \backslash \cargroupA}{\ma \frac{\eulerphi(\na)}{\card{\img\ka}}\frac{\gammaa}{\da}}.\end{aligned}$

$\phantombigstrut\deg \Pa = \mprimea$

$\phantombigstrut\deg \Qa = (\deg \Pa) [\Da:\Q] = \mprimea\frac{\eulerphi(\na)}{\card{\img\ka}}$

\end{document}